\renewcommand{\baselinestretch}{1.1}
\theoremstyle{plain}
\newtheorem{theorem}{Theorem}
\newtheorem{lemma}[theorem]{Lemma}
\newtheorem{corollary}[theorem]{Corollary}
\newenvironment{definition}[1][Definition]{\begin{trivlist}
\item[\hskip \labelsep {\bfseries #1}]}{\end{trivlist}}
\newcommand{\ceil}[1]{\lceil{#1}\rceil}
\newcommand{\floor}[1]{\lfloor{#1}\rfloor}
\renewcommand{\geq}{\geqslant}
\renewcommand{\leq}{\leqslant}
\DeclareMathOperator{\tw}{tw}
\DeclareMathOperator{\pw}{pw}
\DeclareMathOperator{\bn}{bn}
\begin{document}

\title{\bf Treewidth of the Line Graph of Complete and Complete Multipartite Graphs \footnotemark[1]}

\author{Daniel J.\ Harvey \footnotemark[3]\qquad  David~R.~Wood\,\footnotemark[4]}

\date{\today}

\maketitle

\begin{abstract}
In recent papers by Grohe and Marx, the treewidth of the line graph of the complete graph is a critical example. We determine the exact treewidth of the line graph of the complete graph. By extending these techniques, we determine the exact treewidth of the line graph of a regular complete multipartite graph. For an arbitrary complete multipartite graph, we determine the treewidth of the line graph up to a lower order term.
\end{abstract}

\footnotetext[1]{This paper was previously titled ``Treewidth of Line Graphs".}
\footnotetext[3]{Department of Mathematics and Statistics, The
  University of Melbourne, Melbourne, Australia
  (\texttt{d.harvey@pgrad.unimelb.edu.au}). Research of D.J.H. is supported by an Australian Postgraduate Award.}
\footnotetext[4]{School of Mathematical Sciences, Monash University, Melbourne, Australia (\texttt{woodd@unimelb.edu.au}) Research of D.W. is supported by the Australian Research Council.}

\section{Introduction}
\label{section:intro}
The \emph{treewidth} $\tw(G)$ of a graph $G$ is a graph invariant used to measure how ``tree-like" $G$ is. It is of particular importance in structural and algorithmic graph theory; see the surveys \cite{bod, reed}. $\tw(G)$ is the minimum width of a \emph{tree decomposition} of $G$, which is defined as follows:

\begin{definition}
A \emph{tree decomposition} of a graph $G$ is a pair $(T, \{A_{x} \subseteq V(G) : x \in V(T)\})$ such that:
\begin{itemize*}
\item T is a tree.
\item $\{A_{x} \subseteq V(G) : x \in V(T)\}$ is a collection of sets of vertices of $G$, each called a \emph{bag}, indexed by the nodes of $T$.
\item For all $v \in V(G)$, the nodes of $T$ indexing the bags containing $v$ induce a non-empty (connected) subtree of $T$.
\item For all $vw \in E(G)$, there exists a bag of $T$ containing both $v$ and $w$.
\end{itemize*}
\end{definition}

The \emph{width} of a tree decomposition is the maximum size of a bag of $T$, minus 1. This minus 1 is added to ensure that every tree has treewidth 1.
Similarly, we can define pathwidth $pw(G)$ to be the minimum width of a tree decomposition where the underlying tree is a path.

The line-graph $L(G)$ of a graph $G$ is the graph with $V(L(G)) = E(G)$, such that two vertices of $L(G)$ are adjacent when the corresponding edges of $G$ are incident at a vertex.

In recent papers by Marx \cite{marx} and Grohe and Marx \cite{marxgrohe}, the treewidth of the line graph of the complete graph is a critical example. For a graph $G$, let $G^{(q)}$ denote the graph created by replacing each vertex of $G$ with a clique of size $q$ and replacing each edge between two vertices with all of the edges between the new two cliques. Marx \cite{marx} shows that if $G$ has large treewidth, then $G^{(q_1)}$ contains the $L(K_n)^{(q_2)}$ as a minor (for appropriate choices of $q_1$ and $q_2$). Grohe and Marx \cite{marxgrohe} show that $\tw(L(K_{n})) \geq \frac{\sqrt{2}-1}{4}n^{2} + O(n)$. In this paper, we determine $\tw(L(K_{n}))$ exactly. In doing so, our minimum width tree decomposition will be a path, so this result also holds for pathwidth.

\begin{theorem}
\label{theorem:KN}
$$
\tw(L(K_n)) = \pw(L(K_{n})) =
\begin{cases}
(\frac{n-1}{2})(\frac{n-1}{2}) + n-2 & \text{, if $n$ is odd} \\
(\frac{n-2}{2})(\frac{n}{2}) + n-2 & \text{, if $n$ is even}
\end{cases}
$$
\end{theorem}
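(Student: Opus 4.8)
\emph{Overall plan.} Let $c_n$ denote the right-hand side of the claimed identity. I would prove $\tw(L(K_n)) \geq c_n$ and $\pw(L(K_n)) \leq c_n$; since $\tw \leq \pw$ always, these two bounds force all three quantities to coincide. The upper bound comes from an explicit path decomposition, and the lower bound from a bramble together with the Seymour--Thomas theorem that $\tw(G)$ equals one less than the largest order of a bramble in $G$.

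\emph{Upper bound.} Fix the vertex ordering $1, \dots, n$ of $K_n$ and list the edges of $K_n$ in phases: phase $k$ lists all edges incident to vertex $k$ not already listed in phases $1, \dots, k-1$. Let $e_1, \dots, e_N$ ($N = \binom{n}{2}$) be the resulting linear order of $V(L(K_n)) = E(K_n)$, and take the path decomposition whose $i$-th bag is $\{e_i\}$ together with every $e_j$, $j < i$, having a neighbour among $e_i, \dots, e_N$ in $L(K_n)$; this is a valid path decomposition of $L(K_n)$. I would check that when $e_i$ is the $t$-th edge listed in phase $k+1$ --- so $e_1, \dots, e_i$ are exactly the edges incident to $\{1, \dots, k\}$ together with $t$ edges joining $k+1$ to $\{k+2, \dots, n\}$ --- the $i$-th bag is precisely the $k(n-k)$ edges between $\{1, \dots, k\}$ and its complement together with those $t$ edges, of size $k(n-k)+t$ (the last nonempty phase adds a single edge and is handled directly). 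Maximising $k(n-k)+t$ over $1 \leq t \leq n-k-1$ and over $k$ --- the optimum occurring for $k$ near $n/2$ --- gives, after separating the parity of $n$, exactly $c_n + 1$; hence the width is $c_n$, so $\pw(L(K_n))$, and therefore $\tw(L(K_n))$, is at most $c_n$.

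\emph{Lower bound.} Here is the dictionary I would use. A connected subgraph $H$ of $K_n$ with at least one edge gives the connected subgraph $L(K_n)[E(H)]$ of $L(K_n)$; two such subgraphs touch in $L(K_n)$ precisely when $V(H_1) \cap V(H_2) \neq \emptyset$, because a shared vertex of $K_n$ is incident to an edge of each; and a set $F \subseteq E(K_n)$, viewed as a set of vertices of $L(K_n)$, is a hitting set of a family of such subgraphs exactly when every member of the family uses an edge of $F$. Fix a vertex $v_0$ of $K_n$ and let $\mathcal{B}$ be the family of all connected subgraphs $H$ of $K_n$ with $|V(H)| > n/2$, together with all connected subgraphs $H$ with $v_0 \in V(H)$ and $|V(H)| \geq n/2$ (assume $n \geq 4$; $n \leq 3$ is trivial). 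A three-case vertex count --- two members of the first kind have $|V(H_1)| + |V(H_2)| > n$; one of each kind likewise; two of the second kind both contain $v_0$ --- shows $\mathcal{B}$ is a bramble. A set $F$ hits every member of $\mathcal{B}$ exactly when every component of $K_n - F$ has at most $\lfloor n/2 \rfloor$ vertices and the component containing $v_0$ has at most $\lceil n/2 \rceil - 1$ vertices, so the order of $\mathcal{B}$ equals $\binom{n}{2} - M$, where $M$ is the maximum of $\sum_i \binom{s_i}{2}$ over partitions $s_1 + \dots + s_r = n$ obeying those constraints. A convexity argument picks out the extremal partition --- parts $\frac{n-1}{2}, \frac{n-1}{2}, 1$ for odd $n$, and parts $\frac{n}{2}-1, \frac{n}{2}, 1$ for even $n$, the $v_0$-constraint being exactly what forbids the balanced split in the even case --- and substituting gives $\binom{n}{2} - M = c_n + 1$. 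Hence $\tw(L(K_n)) \geq \binom{n}{2} - M - 1 = c_n$.

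\emph{The main obstacle} is getting the exact lower-order term on the lower-bound side. The plainer bramble, consisting only of the connected subgraphs of $K_n$ on more than $n/2$ vertices, already yields the leading term $\tfrac14 n^2$ and is in fact tight for odd $n$; but for even $n$ it undershoots the truth by $\tfrac{n}{2} - 1$, because the cheapest way to split $K_{2m}$ into two equal halves removes only $m^2$ edges. Recovering the missing term by pushing one distinguished vertex into a strictly smaller block --- i.e.\ by adjoining the $v_0$-anchored subgraphs --- and then re-checking that the enlarged family is still a bramble and recomputing its order is the crux of the argument. The corresponding work on the upper-bound side is just the phase-by-phase bag-size bookkeeping (with the degenerate final phase verified by hand), which is routine.
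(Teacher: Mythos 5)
Your proposal is correct and follows essentially the same route as the paper: the lower bound uses exactly the paper's canonical line-bramble for a fixed vertex together with the same characterisation of its hitting sets as balanced edge-separators of $K_n$, and your phase-ordered path decomposition is just a refinement of the paper's bags $A_i=\{ij: j\in V(G)\}\cup\{uw: u<i<w\}$, attaining the same width. The only real divergence is that you pin down the minimum hitting set globally, as $\binom{n}{2}$ minus the maximum of $\sum_i\binom{s_i}{2}$ over admissible partitions and then invoke convexity, whereas the paper reaches the same extremal partition via a local exchange argument (its ``good pairs'' and the resulting three-component structure); for $K_n$ these are interchangeable, and both computations land on the same value.
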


The complete multipartite graph $K_{n_{1}, n_{2}, \dots, n_{k}}$ is the graph with $k$ colour classes, of order $n_{1}, \dots, n_{k}$ respectively, containing an edge between every pair of differently coloured vertices. We determine bounds on the treewidth of the line graph of the complete multipartite graph. Again, this is also a bound on the pathwidth.

\begin{theorem}
\label{theorem:KNk}
If $k \geq 2$ and $n = |V(K_{n_{1}, \dots, n_{k}})|$, then
\begin{align*}
-n(k-1) + \frac{3}{4}k(k-1) -1 + &\frac{1}{2}\left(\sum\limits_{1 \leq i < j \leq k} n_{i}n_{j}\right) \\ \leq \tw(K_{n_{1}, \dots, n_{k}}) &\leq \pw(K_{n_{1}, \dots, n_{k}}) \\  &\leq \frac{1}{2}n(k+5) + \frac{1}{4}k(k-1) - 4 + \frac{1}{2}\left(\sum\limits_{1 \leq i < j \leq k} n_{i}n_{j}\right).
\end{align*}
\end{theorem}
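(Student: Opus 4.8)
The plan is to prove the two bounds of Theorem~\ref{theorem:KNk} separately, in each case reducing to a known or computable structural quantity of $K_{n_1,\dots,n_k}$ itself rather than working directly with the line graph. For the \emph{upper bound}, I would build an explicit path decomposition of $L(K_{n_1,\dots,n_k})$. The natural approach, mirroring the construction that presumably proves Theorem~\ref{theorem:KN}, is to fix an ordering $v_1,\dots,v_n$ of $V(K_{n_1,\dots,n_k})$ and sweep through the vertices; at the step handling $v_i$ the bag should contain all edges incident to $v_i$ together with a ``carried'' set of edges that join a low-numbered vertex to a high-numbered vertex and hence have not yet been fully processed. Every edge $e=v_av_b$ (with $a<b$) lives in the bags from step $a$ to step $b$, so the subtree (subpath) condition holds; and two incident edges share an endpoint, which is visited at some step whose bag contains both, so the edge condition holds. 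The size of the $i$-th bag is (number of edges at $v_i$) $+$ (number of cross edges $v_av_b$ with $a\le i<b$, $a\ne i$), and one chooses the vertex ordering — grouping the colour classes and ordering them to balance these contributions — to minimise the maximum bag size. Estimating this maximum is the routine but bookkeeping-heavy part: the degree term contributes roughly $n-n_{c(i)}$ where $c(i)$ is the colour of $v_i$, and the ``cut'' term is maximised near the middle of the ordering and is about half the total number of edges $\tfrac12\sum_{i<j}n_in_j$, with correction terms of order $nk$ and $k^2$ coming from the class boundaries. Collecting these gives the stated $\tfrac12 n(k+5)+\tfrac14 k(k-1)-4+\tfrac12\sum_{i<j}n_in_j$.

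For the \emph{lower bound}, the cleanest route is via a bramble or, more directly, via the well-known inequality relating treewidth of a line graph to a ``congestion/cut'' parameter of the base graph. Specifically, for any graph $H$ one has $\tw(L(H))+1 \ge$ the maximum over all tree decompositions... — more usefully, I would use the fact (implicit in the Grohe–Marx argument) that a tree decomposition of $L(H)$ of width $w$ induces, for each vertex $v$ of $H$, a subtree $T_v$ (the union of bags meeting the star at $v$), and that these subtrees, together with the requirement that incident stars' subtrees intersect, behave like a tree decomposition of $H$ with a lower bound on bag sizes coming from how many stars a single bag of $L(H)$ can ``capture''. Quantitatively, a bag of size $w+1$ in $L(H)$ can contain the full star of at most a bounded number of vertices, and an edge $uv$ forces bag overlap, leading to the bound $w+1 \ge \tfrac12\bigl(\text{something like }e(H) - n\cdot(k-1)\bigr)$. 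I would make this precise by a counting/averaging argument over the edges of $H=K_{n_1,\dots,n_k}$: pick a node of the decomposition tree that is ``central'' with respect to a fractional assignment of edges, and show its bag must contain at least half the cross-class pairs minus a lower-order defect of size $n(k-1)-\tfrac34k(k-1)+1$.

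The main obstacle I expect is the lower bound, and specifically getting the lower-order term exactly right rather than up to $O(n)$ — Grohe and Marx only needed an asymptotic statement, so their bramble was not tight. The fix is likely to replace the bramble/averaging by a more careful potential-function or ``last bag to contain a given vertex'' argument: order the vertices of $H$ by the time their star's subtree ``dies'' in a chosen path decomposition, and at the moment vertex $v_i$'s star disappears, the current bag must still contain all not-yet-processed edges at $v_i$ plus a balanced remainder, exactly dualising the upper-bound construction. Matching the two constructions up to the claimed additive terms is where the $-n(k-1)+\tfrac34k(k-1)-1$ and $+\tfrac12 n(k+5)+\tfrac14 k(k-1)-4$ must be reconciled; I would verify consistency by specialising to the regular case $n_1=\dots=n_k$ and checking it collapses to the exact value asserted (for regular complete multipartite graphs) elsewhere in the paper, and to Theorem~\ref{theorem:KN} when $k=n$, $n_i=1$. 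The remaining steps — verifying the decomposition axioms and the arithmetic of the bag-size estimates — are routine.
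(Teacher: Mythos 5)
Your upper-bound plan (a direct vertex sweep of $K_{n_1,\dots,n_k}$ with the colour classes interleaved so that every prefix splits each class nearly proportionally) is a legitimate alternative to what the paper does --- the paper instead builds the path decomposition around a minimum hitting set $H$, with the central bag equal to $H$ and one-sided sweeps over the two large components, and then bounds $|H|$ separately --- and with the generous additive budget $\frac{1}{2}n(k+5)+\frac{1}{4}k(k-1)-4$ your sweep should fit after the (unverified but genuinely routine) bookkeeping, provided you really do control the prefix cut at \emph{every} position, not just the middle, since a careless ordering of a multipartite graph can have prefix cuts up to twice the balanced value.

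The genuine gap is in the lower bound, and it is a gap of ideas, not bookkeeping. The mechanism you propose --- averaging over edges to find a ``central'' bag of a decomposition of $L(G)$, or tracking when each vertex's star dies --- only yields a bag/separator whose removal leaves components containing at most half of the \emph{edges} of $G$; but a component with half the edges of a complete multipartite graph can have about $n/\sqrt{2}$ vertices, and the corresponding cut is strictly smaller than the balanced bisection. This is exactly why Grohe and Marx obtained only $\frac{\sqrt{2}-1}{4}n^2+O(n)$ for $L(K_n)$ rather than the true $\frac{n^2}{4}$, so your route cannot reach the main term $\frac{1}{2}\sum_{i<j}n_in_j$, let alone the stated lower-order term; and the suggested ``fix'' (a potential function, or the last bag containing a star) does not by itself force edges into a single bag, because a point lying in the intervals of two stars need not lie in the interval of the edge joining them. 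The missing idea is the paper's line-bramble: one defines the canonical line-bramble for a vertex $v$ of a largest colour class, shows it induces a bramble of $L(G)$ of the same order (Lemma~\ref{lemma:KNlbisb}), and characterises its hitting sets as edge sets whose removal leaves every component with at most $\frac{n}{2}$ \emph{vertices} (Lemma~\ref{lemma:KNlegithit}); by the Treewidth Duality Theorem this reduces the lower bound to a vertex-balanced bisection-width computation. One then still needs the structural analysis (culminating in Theorem~\ref{theorem:Hfornonreg}) showing that a minimum such cut has exactly three components, two of them of order about $\frac{n}{2}$, with every colour class split within one of evenly between them; only this per-class balance gives the count $\sum_{i\neq j}|V(Q_1)\cap X_i|\,|V(Q_2)\cap X_j|\geq \frac{1}{2}\sum_{i<j}n_in_j-(k-1)n+\frac{3}{4}k(k-1)$ with the claimed additive constant. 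Without these two steps your lower bound does not go through.
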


Theorem~\ref{theorem:KNk} implies that when $n_{1} = \dots = n_{k} = c$, (that is, when our complete multipartite graph is regular) then $\tw(L(K_{c, \dots, c})) \approx \frac{k^{2}c^{2}}{4}$ (ignoring the lower order terms). We improve this result, obtaining an exact answer for the treewidth and pathwidth of the line graph of a regular complete multipartite graph.

\begin{theorem}
\label{theorem:KNkreg}
If $n_{1} = n_{2} = \dots = n_{k} = c \geq 1$, then
$$\tw(L(K_{n_{1}, \dots, n_{k}})) = \pw(L(K_{n_{1}, \dots, n_{k}}))=
\begin{cases}
\frac{c^{2}k^{2}}{4} - \frac{c^{2}k}{4} + \frac{ck}{2} - \frac{c}{2} + \frac{k}{4} - \frac{5}{4} &  \text{, if $k$ odd, $c$ odd} \\
\frac{c^{2}k^{2}}{4} - \frac{c^{2}k}{4} + \frac{ck}{2} - \frac{c}{2} - 1 &  \text{, if $c$ even} \\
\frac{c^{2}k^{2}}{4} - \frac{c^{2}k}{4} + \frac{ck}{2} - \frac{c}{2} + \frac{k}{4} - \frac{3}{2} &  \text{, if $k$ even, $c$ odd} \\
\end{cases}
$$
\end{theorem}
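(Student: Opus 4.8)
The plan is to convert the question into one about low-congestion embeddings of $K_{n_{1},\dots,n_{k}}$ into a host tree. For any graph $G$, the edges incident to a fixed vertex $v$ form a clique in $L(G)$, so in any tree decomposition of $L(G)$ the Helly property for subtrees gives a bag containing all of them; choosing one such bag for each $v$ and routing each edge $uv$ along the path between the bags chosen for $u$ and $v$ shows that every bag has size at least the number of routed edge-paths through the corresponding node. Conversely, given any tree $T$, any map $\phi\colon V(G)\to V(T)$, and the routing of each edge $uv$ along $\mathrm{path}_{T}(\phi(u),\phi(v))$, the sets $A_{z}:=\{uv\in E(G):z\in\mathrm{path}_{T}(\phi(u),\phi(v))\}$ form a tree decomposition of $L(G)$, which is a path decomposition when $T$ is a path. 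Hence
\[
\tw(L(G))=-1+\min_{T,\phi}\ \max_{z\in V(T)}\ \bigl|\{uv\in E(G):z\in\mathrm{path}_{T}(\phi(u),\phi(v))\}\bigr|,
\]
and the same quantity bounds $\pw(L(G))$ once an optimal $T$ can be taken to be a path. (This is the machinery behind Theorem~\ref{theorem:KN}; I would state it as a lemma and reuse it.) For $G=K_{n_{1},\dots,n_{k}}$ with all $n_{i}=c$, if $D_{1},\dots,D_{d}$ are the components of $T-z$ and $W_{i}:=\phi^{-1}(D_{i})$, then the number of paths through $z$ equals $e(G)-\sum_{i}e(G[W_{i}])$, so everything reduces to controlling $\sum_{i}e(G[W_{i}])$, where each $W_{i}$ is a vertex set and $\sum_{i}|W_{i}|\le ck$.

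For the upper bound I take $T$ to be a path on $ck$ nodes and let $\phi$ list the vertices of $G$ in round-robin order through the colour classes. Then every prefix and every suffix of this order is as balanced across the $k$ classes as possible, so its internal edge count equals $h(a):=\max\{e(G[S]):|S|=a\}$; a short computation gives $h(a+1)-h(a)=a-\lfloor a/k\rfloor$, so $h$ is convex. The number of paths through the $t$-th node is $e(G)-h(t-1)-h(ck-t)$, which by convexity is maximised in the middle, with value $e(G)-h(\lceil(ck-1)/2\rceil)-h(\lfloor(ck-1)/2\rfloor)$; substituting the closed form of $h$ and separating the cases ``$c$ even'', ``$c$ and $k$ both odd'', and ``$c$ odd, $k$ even'' gives exactly one more than the formula in Theorem~\ref{theorem:KNkreg}. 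As $T$ is a path, this also bounds $\pw(L(G))$.

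For the lower bound I must show no tree $T$ and map $\phi$ do better. Put weight $\mu(z):=|\phi^{-1}(z)|$ on $V(T)$ and take a centroid $z^{\star}$, so every component of $T-z^{\star}$ has weight at most $ck/2$. Unless $z^{\star}$ is unoccupied with exactly two positive-weight components, each of weight exactly $ck/2$, the positive-weight component weights either sum to at most $ck-1$ or split into at least three parts, and in either case convexity of $h$ gives $\sum_{i}e(G[W_{i}])\le h(\lceil(ck-1)/2\rceil)+h(\lfloor(ck-1)/2\rfloor)$, precisely the bound needed. In the remaining ``perfectly balanced'' case I walk from $z^{\star}$ into one of the two components, stepping along unoccupied nodes that have a unique positive-weight child-subtree, and stop at the first node $y$ that is occupied or has at least two positive-weight children. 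By construction the component of $T-y$ containing $z^{\star}$ still has weight exactly $ck/2$, while the remaining weight is split off with a deficit of at least one unit, so again $\sum_{i}e(G[W_{i}])\le h(\lceil(ck-1)/2\rceil)+h(\lfloor(ck-1)/2\rfloor)$. Either way $T$ has a node with at least $e(G)-h(\lceil(ck-1)/2\rceil)-h(\lfloor(ck-1)/2\rfloor)$ paths through it, matching the upper bound; with $\tw\le\pw$ this gives equality throughout.

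I expect the genuine obstacle to be this ``perfectly balanced centroid'' case: the plain centroid bound falls short of the truth by about $\tfrac{1}{2}c(k-1)$, and recovering the exact constant requires the walking argument together with a careful check that it terminates at a node with the claimed weight profile (and that non-injective $\phi$ cannot help the adversary, which is automatic since the argument never assumes $\phi$ injective). The remaining work --- putting $h$ in closed form and carrying out the three parity cases for both bounds and checking they agree --- is routine but calculation-heavy.
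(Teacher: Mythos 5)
Your proposal is correct, but it takes a genuinely different route from the paper. The paper's lower bound is bramble-based: it builds the canonical line-bramble for a vertex $v$, invokes the Seymour--Thomas duality theorem, and then spends most of its effort on a structural analysis of a minimum hitting set $H$ (good pairs, good labellings, rare configurations, balanced and skew colour classes, culminating in Theorem~\ref{theorem:Hforreg}), after which an explicit path decomposition with bags $\beta_i,\gamma,\alpha_i$ is shown to have $\gamma=H$ as its largest bag. You instead prove and use the congestion identity $\tw(L(G))+1=\min_{T,\phi}\max_{z}|\{uv\in E(G):z\in\mathrm{path}_T(\phi(u),\phi(v))\}|$ (via the clique-in-a-bag Helly argument one way and the induced decomposition the other); note this is \emph{not} the machinery the paper uses for Theorem~\ref{theorem:KN}, though your lemma is correct and self-contained. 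Your upper bound (round-robin order on a path, convexity of $h$) is the natural generalisation of the paper's $K_n$ construction, and your centroid-plus-convexity lower bound, with the walking argument handling the perfectly balanced unoccupied centroid when $ck$ is even, correctly recovers the exact constant that a plain centroid bound misses; I checked that your value $e(G)-h(\lceil(ck-1)/2\rceil)-h(\lfloor(ck-1)/2\rfloor)$ agrees with the paper's $|H|$ in all three parity cases. Your route buys a shorter, reusable framework valid for every $G$ and avoids the paper's colour-class case analysis; the paper's route yields finer structural information about optimal separating edge sets. What remains for you to write out is routine but necessary: the capped convex maximisation of $\sum_i h(w_i)$, termination and the weight invariant of the walk, and the three parity computations.
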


Note that this implies Theorem~\ref{theorem:KN} is a specical case of Theorem~\ref{theorem:KNkreg}. In order to prove these results, we use the theory of \emph{brambles} and the Treewidth Duality Theorem, which we present in Section~\ref{section:bramtddual}. Section~\ref{section:lbnltd} presents a framework for proving results about the treewidth of general line graphs, which are of independent interest. Theorem~\ref{theorem:KN} is proved in Section~\ref{section:KN}. Theorem~\ref{theorem:KNk} and Theorem~\ref{theorem:KNkreg} are proved in Section~\ref{section:KNk} and Section~\ref{section:KNknonregtree}. %Section~\ref{section:general} discusses the treewidth of the line graph of a general graph.

Finally, note the following conventions: if $S$ is a subgraph of a graph $G$ and $x \in V(G)-V(S)$, then let $S \cup \{x\}$ denote the subgraph of $G$ with vertex set $V(S) \cup \{x\}$ and edge set $E(S) \cup \{xy: y \in S, xy \in E(G)\}$. Similarly, if $u \in V(S)$, let $S - \{u\}$ denote the subgraph with vertex set $V(S) - \{u\}$ and edge set $E(S) - \{uw: w \in S-\{u\}\}$.

\section{Brambles and the Treewidth Duality Theorem}
\label{section:bramtddual}
A \emph{bramble} of a graph $G$ is a collection $\mathcal{B}$ of connected subgraphs of $G$ such that each pair of subgraphs $X,Y \in \mathcal{B}$ touch, where $X$ and $Y$ \emph{touch} when they either have at least one vertex in common, or there exists an edge in $G$ with one end in $V(X)$ and the other in $V(Y)$.
The \emph{order} of a bramble is the size of the smallest hitting set $H$, where a \emph{hitting set} of a bramble $\mathcal{B}$ is a set of vertices $H$ such that $H \cap V(X) \neq \emptyset$ for all $X \in \mathcal{B}$.
For a given graph $G$, the \emph{bramble number} $\bn(G)$ is the maximum order of a bramble of $G$.
Brambles are important due to the following theorem of Seymour and Thomas \cite{dual}:
\begin{theorem}(Treewidth Duality Theorem)
For every graph $G$, $\bn(G) = \tw(G)+1$.
\end{theorem}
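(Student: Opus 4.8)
This is the classical Treewidth Duality Theorem of Seymour and Thomas, so the plan is to establish the two inequalities $\bn(G)\le\tw(G)+1$ and $\tw(G)+1\le\bn(G)$ separately; the first is a short Helly-type argument and the second carries all the weight.

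\textbf{Proof that $\bn(G)\le\tw(G)+1$.} I would fix an optimal tree decomposition $(T,\{A_x\})$ of $G$, of width $w=\tw(G)$, and let $\mathcal{B}$ be an arbitrary bramble. For a connected subgraph $X$ of $G$ put $T_X:=\{x\in V(T):A_x\cap V(X)\ne\emptyset\}$. Since each $\{x:v\in A_x\}$ ($v\in V(G)$) induces a subtree of $T$, and since adjacent vertices of $X$ share a bag, building up along a spanning tree of $X$ shows that $T_X$ induces a non-empty subtree of $T$. If $X,Y\in\mathcal{B}$ touch then $T_X\cap T_Y\ne\emptyset$: a common vertex of $X$ and $Y$ lies in a common bag, while an edge with one end in $V(X)$ and the other in $V(Y)$ lies in a bag meeting both. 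Thus $\{T_X:X\in\mathcal{B}\}$ is a pairwise-intersecting family of subtrees of a tree, so by the Helly property for subtrees of trees there is a node $x^\star$ lying in all of them; then $A_{x^\star}$ meets every $X\in\mathcal{B}$, so $\mathcal{B}$ has order at most $|A_{x^\star}|\le w+1$. Maximising over $\mathcal{B}$ gives $\bn(G)\le\tw(G)+1$.

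\textbf{Proof that $\tw(G)+1\le\bn(G)$.} I would prove the equivalent statement: if $G$ has no bramble of order $>k$, then $\tw(G)\le k-1$, and then apply it with $k=\bn(G)$. The plan is an induction that builds a tree decomposition of width $\le k-1$ by repeatedly splitting $G$ along separations of order $\le k$. It is convenient to strengthen the claim to: for every $W\subseteq V(G)$ with $|W|\le k$ there is such a decomposition with some bag containing $W$ (the case $W=\emptyset$ being what is wanted), and to induct on $|V(G)\setminus W|$; the base case $W=V(G)$ is the single bag $W$. In the inductive step one needs, given $W$, a separation $(A,B)$ of order $\le k$ with $W$ inside one side and both sides strictly smaller, so that the inductively obtained decompositions of the two sides glue along the bag $A\cap B$. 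Producing such a separation is exactly where the hypothesis $\bn(G)\le k$ enters: one uses the submodularity of the order function on separations — if $(A_1,B_1)$ and $(A_2,B_2)$ are separations of $G$ then so are $(A_1\cap A_2,\,B_1\cup B_2)$ and $(A_1\cup A_2,\,B_1\cap B_2)$, and the sum of their orders is at most that of the original two — to run an extremal ``uncrossing'' argument selecting a good separation; and if no suitable small separation exists, then the family of connected subgraphs of $G$ that cannot be separated from $W$ by fewer than $k+1$ vertices is itself a bramble of order $>k$, contradicting the hypothesis.

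\textbf{Main obstacle.} The first inequality is essentially bookkeeping. The substance is the separation-finding step above: proving that the absence of a bramble of order $>k$ always supplies a separation of order $\le k$ that is simultaneously non-trivial on both sides, and doing the uncrossing and the degenerate-case analysis (complete graphs, graphs on at most $k$ vertices, disconnected graphs) carefully enough that the induction closes. A clean alternative I would keep in reserve factors the hard direction through the cops-and-robber game: $\tw(G)\le k-1$ if and only if $k$ cops can capture a visible, arbitrarily fast robber on $G$; one direction uses the non-trivial monotonicity theorem (a winning cop strategy may be taken to cause no recontamination, whereupon its position sets, arranged by the branching of the robber's territory, are the bags of a width-$(k-1)$ tree decomposition), and the other direction converts a winning robber strategy directly into a bramble of order $>k$.
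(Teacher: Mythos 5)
The paper does not prove this statement at all: it is quoted from Seymour and Thomas \cite{dual} and used as a black box, so there is no in-paper argument to compare yours against; your proposal has to stand on its own as a proof of the classical duality theorem. Your first direction, $\bn(G)\le\tw(G)+1$, is complete and correct: the Helly property for subtrees of a tree applied to the sets $T_X$ is the standard argument.

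The hard direction, however, has a genuine gap beyond the step you explicitly defer. The strengthened induction hypothesis you propose --- that whenever $G$ has no bramble of order $>k$, every $W\subseteq V(G)$ with $|W|\le k$ lies inside a single bag of some tree decomposition of width $\le k-1$ --- is false, so the induction cannot close in the form you set it up. Take $G$ to be the path $a$--$b$--$c$ and $k=2$: every bramble of $G$ has order at most $2$ (a bramble cannot contain both $\{a\}$ and $\{c\}$, since they do not touch, so $b$ together with at most one further vertex is a hitting set), yet no tree decomposition of width $1$ has a bag containing $W=\{a,c\}$. Indeed such a decomposition would have nodes $x,y,z$ with bags $\{a,b\}$, $\{b,c\}$, $\{a,c\}$, and the median node of $x,y,z$ in the decomposition tree lies on the $x$--$y$ path (all of whose bags contain $b$), on the $x$--$z$ path (bags containing $a$) and on the $y$--$z$ path (bags containing $c$), forcing a bag of size $3$. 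This is precisely why the known proofs do not ask for a prescribed $k$-set to live in a bag of a width-$(k-1)$ decomposition: Seymour and Thomas, and the streamlined inductions that followed (e.g.\ Bellenbaum and Diestel), instead induct with respect to a bramble, constructing decompositions adapted to it in which only small bags are permitted to cover it, or else they pass through the cops-and-robber game, where the expensive ingredient is the monotonicity theorem --- which your ``reserve'' route also leaves unproved. Your parenthetical claim that, failing a suitable separation, ``the connected subgraphs that cannot be separated from $W$ by fewer than $k+1$ vertices'' form a bramble of order $>k$ is likewise not workable as stated (with the usual notion of separator, $W$ itself separates any subgraph disjoint from it from $W$). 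So as written the proposal establishes only the easy inequality; the substance of $\tw(G)+1\le\bn(G)$ is missing, and the scaffolding proposed for it would need to be replaced rather than merely filled in.
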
  

In this paper we employ the following standard approach for determining the treewidth and pathwidth of a particular graph $G$. First we construct a bramble of large order, thus proving a lower bound on $\tw(G)$. Then to prove an upper bound, we construct a path decomposition of small width. A first step in constructing such a path decomposition is to place a minimum hitting set of the bramble in a single bag; when this bag is a bag of maximum size, we have an exact answer for $\pw(G)$ and $\tw(G)$.

\section{Line-brambles}
\label{section:lbnltd}
Throughout this section, let $G$ be an arbitrary graph. In order to construct a bramble of the line graph $L(G)$, we define the following:

\begin{definition}
A \emph{line-bramble} $\mathcal{B}$ of $G$ is a collection of connected subgraphs of $G$ satisfying the following properties:
\begin{itemize}
\item For all $X \in \mathcal{B}$, $|V(X)| \geq 2$.
\item For all $X,Y \in \mathcal{B}$, $V(X) \cap V(Y) \neq \emptyset$.
\end{itemize}
Define a \emph{hitting set} for a line-bramble $\mathcal{B}$ to be a set of edges $H \subseteq E(G)$ that intersects each $X \in \mathcal{B}$. Then define the \emph{order} of $\mathcal{B}$ to be the size of the minimum hitting set $H$ of $\mathcal{B}$.
\end{definition}

\begin{lemma}
\label{lemma:KNlbisb}
Given a line-bramble $\mathcal{B}$ of $G$, there is a bramble $\mathcal{B'}$ of $L(G)$ of the same order.
\end{lemma}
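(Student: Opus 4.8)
The plan is to push each subgraph of the line-bramble forward to its own line graph, viewed inside $L(G)$. Precisely, for each $X \in \mathcal{B}$ I would let $X'$ be the subgraph of $L(G)$ induced by the vertex set $E(X) \subseteq V(L(G))$; note that $X'$ is exactly $L(X)$, since two edges of $X$ share an endpoint in $X$ if and only if they share an endpoint in $G$. Then set $\mathcal{B}' := \{X' : X \in \mathcal{B}\}$ and verify the three things needed: each $X'$ is a connected subgraph of $L(G)$, each pair $X', Y'$ touches, and $\mathcal{B}'$ has the same order as $\mathcal{B}$.

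For connectivity: since $X$ is connected and $|V(X)| \geq 2$, it has at least one edge, and it is standard that the line graph of a connected graph with at least one edge is connected; hence $X' = L(X)$ is connected. For touching: given $X, Y \in \mathcal{B}$, pick $v \in V(X) \cap V(Y)$, which is nonempty by the line-bramble axiom. Because $X$ and $Y$ are connected with at least two vertices, $v$ is incident in $X$ to some edge $e$ and in $Y$ to some edge $f$. If $e = f$ then $X'$ and $Y'$ share the vertex $e$; otherwise $e$ and $f$ are distinct edges of $G$ meeting at $v$, so $ef \in E(L(G))$ is an edge with one end $e \in V(X')$ and the other end $f \in V(Y')$. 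Either way $X'$ and $Y'$ touch, so $\mathcal{B}'$ is a bramble of $L(G)$.

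Finally, for the order: a hitting set of the line-bramble $\mathcal{B}$ is a set $H$ of edges of $G$ with $H \cap E(X) \neq \emptyset$ for every $X$, while a hitting set of the bramble $\mathcal{B}'$ is a set $H$ of vertices of $L(G)$ — that is, again a set of edges of $G$ — with $H \cap V(X') \neq \emptyset$ for every $X$; since $V(X') = E(X)$, these two conditions are identical, so the minimum hitting sets coincide and $\mathcal{B}$ and $\mathcal{B}'$ have equal order. I do not expect any genuine obstacle here: the only points deserving a word of care are the connectivity fact for line graphs and the (harmless) possibility that $X \mapsto X'$ fails to be injective, which affects neither whether $\mathcal{B}'$ is a bramble nor its order.
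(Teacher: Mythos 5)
Your proposal is correct and follows essentially the same route as the paper: take the subgraphs of $L(G)$ induced by the edge sets $E(X)$, verify connectivity and touching via a common vertex of $X$ and $Y$, and observe that hitting sets of $\mathcal{B}$ and $\mathcal{B}'$ are literally the same objects. Your explicit handling of the case $e=f$ is a small point of extra care that the paper's wording glosses over, but the argument is the same.
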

\begin{proof}
Let $X$ be an element of line-bramble $\mathcal{B}$ and let $\mathcal{B'} = \{E(X)| X \in \mathcal{B}\}$ (where here $E(X)$ refers to the subgraph of $L(G)$ induced by the vertices of $E(X)$). Recall $X$ is connected. Now since $|V(X)| \geq 2$, $X$ contains an edge. So $E(X)$ induces a non-empty connected subgraph of $L(G)$. Consider $E(X)$ and $E(Y)$ in $\mathcal{B'}$. Thus $V(X) \cap V(Y) \neq \emptyset$. Let $v$ be a vertex in $V(X) \cap V(Y)$. Then there exists some $xv \in E(X)$ and $vy \in E(Y)$, and so in $L(G)$ there is an edge between the vertex $xv$ and the vertex $vy$. Hence $E(X)$ and $E(Y)$ touch. Thus $\mathcal{B'}$ is a bramble of $L(G)$.
All that remains is to ensure $\mathcal{B}$ and $\mathcal{B'}$ have the same order. If $H$ is a minimum hitting set for $\mathcal{B}$, then $H$ is also a set of vertices in $L(G)$ that intersects a vertex in each $E(X) \in \mathcal{B'}$. So $H$ is a hitting set for $\mathcal{B'}$ of the same size. Conversely, if $H'$ is a minimum hitting set of $\mathcal{B'}$, then $H'$ is a set of edges in $G$ that contains an edge in each $X \in \mathcal{B}$. So $H'$ is a hitting set for $\mathcal{B}$. Thus the orders of $\mathcal{B}$ and $\mathcal{B'}$ are equal.
\end{proof}

Hence, in order to determine a lower bound on the bramble number $\bn(L(G))$, it is sufficient to construct a line-bramble of $G$ of large order. We will now define a particular line-bramble for any graph $G$ with $|V(G)| \geq 3$.

\begin{definition}
Given a vertex $v \in V(G)$, the \emph{canonical line-bramble for $v$} of $G$ is the set of connected subgraphs $X$ of $G$ such that either $|V(X)| > \frac{|V(G)|}{2}$, or $|V(X)| = \frac{|V(G)|}{2}$ and $X$ contains $v$. Note that if $|V(G)|$ is odd, then no elements of the second type occur.
\end{definition}

\begin{lemma}
\label{lemma:KNislbram}
For every graph $G$ with $|V(G)| \geq 3$ and for all $v \in V(G)$, the canonical line-bramble for $v$, denoted $\mathcal{B}$, is a line-bramble of $G$.
\end{lemma}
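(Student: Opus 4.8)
The plan is to verify the two defining properties of a line-bramble directly for the canonical line-bramble $\mathcal{B}$ for $v$: namely, that every element has at least two vertices, and that every pair of elements shares a vertex.

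The first property is immediate: every $X \in \mathcal{B}$ satisfies $|V(X)| \geq \frac{|V(G)|}{2} \geq \frac{3}{2}$, and since $|V(X)|$ is an integer, $|V(X)| \geq 2$. So the only real work is the intersection property. First I would take arbitrary $X, Y \in \mathcal{B}$ and split into cases according to whether each has more than $\frac{|V(G)|}{2}$ vertices or exactly $\frac{|V(G)|}{2}$ vertices. In the case where both $|V(X)| > \frac{|V(G)|}{2}$ and $|V(Y)| > \frac{|V(G)|}{2}$, we have $|V(X)| + |V(Y)| > |V(G)|$, so by inclusion–exclusion $|V(X) \cap V(Y)| = |V(X)| + |V(Y)| - |V(X) \cup V(Y)| > |V(G)| - |V(G)| = 0$, hence they intersect. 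The same strict-inequality argument handles the mixed case where one is strictly larger than $\frac{|V(G)|}{2}$ and the other is at least $\frac{|V(G)|}{2}$: we still get $|V(X)| + |V(Y)| > |V(G)|$.

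The remaining case is when $|V(X)| = |V(Y)| = \frac{|V(G)|}{2}$ exactly, which forces $|V(G)|$ even and, by definition of $\mathcal{B}$, forces both $X$ and $Y$ to contain $v$. Then $v \in V(X) \cap V(Y)$, so again they intersect. This completes the verification that $\mathcal{B}$ is a line-bramble. I would also note in passing that $\mathcal{B}$ is nonempty whenever $|V(G)| \geq 3$: for instance any connected subgraph on $\lceil \frac{|V(G)|+1}{2}\rceil$ vertices (such subgraphs exist since $G$ need not be connected — wait, one must be slightly careful here) — more safely, any subgraph consisting of a single edge together with enough additional vertices; but strictly speaking the lemma only asserts that $\mathcal{B}$ satisfies the line-bramble axioms, and an empty collection vacuously does, so nonemptiness is not required for this statement.

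There is essentially no hard part here — the proof is a short case analysis using the pigeonhole principle on vertex counts. The one subtlety worth stating carefully is the parity point: elements of size exactly $\frac{|V(G)|}{2}$ only exist when $|V(G)|$ is even, and the definition only admits such elements when they contain the fixed vertex $v$, which is precisely what makes the equal-size case work. I would make sure the write-up flags that the strict inequalities in the first two cases are what give a nonempty intersection, and that the third case is handled entirely by the common vertex $v$.
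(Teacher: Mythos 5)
Your proposal is correct and follows essentially the same argument as the paper: the size bound $|V(X)| \geq \frac{|V(G)|}{2} \geq \frac{3}{2}$ gives $|V(X)| \geq 2$, the strict-inequality counting argument handles pairs where at least one element exceeds $\frac{|V(G)|}{2}$, and the common vertex $v$ handles the case of two elements of size exactly $\frac{|V(G)|}{2}$. The only cosmetic difference is that the paper also explicitly notes that each element is connected by definition, which you may as well state for completeness.
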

\begin{proof}
By definition, each element of $\mathcal{B}$ is a connected subgraph. Since $|V(G)| \geq 3$, each element of $\mathcal{B}$ contains at least two vertices. All that remains to show is that each pair of subgraphs $X$,$Y$ in $\mathcal{B}$ intersect in at least one vertex. If $|V(X)|=|V(Y)| = \frac{|V(G)|}{2}$, then $X$ and $Y$ intersect at $v$. Otherwise, without loss of generality, $|V(X)| > \frac{|V(G)|}{2}$ and $|V(Y)| \geq \frac{|V(G)|}{2}$. If $V(X) \cap V(Y) = \emptyset$, then $|V(X) \cup V(Y)| = |V(X)|+|V(Y)| > |V(G)|$, which is a contradiction.  
\end{proof}

Let $v \in V(G)$ be an arbitrary vertex and let $H$ be a minimum hitting set of $\mathcal{B}$, the canonical line-bramble for $v$. Consider the graph $G-H$. $H$ is a set of edges, so $V(G-H)=V(G)$. Then each component of $G-H$ contains at most $\frac{|V(G)|}{2}$ vertices, otherwise some component of $G-H$ contains an element of $\mathcal{B}$ that does not contain an edge of $H$. Similarly, if a component contains $\frac{|V(G)|}{2}$ vertices, it cannot contain the vertex $v$.
Thus, our hitting set $H$ must be large enough to separate $G$ into such components. %Label the components of $G-H$ as $Q_{1}, \dots , Q_{p}$ such that $|V(Q_{1})| \geq |V(Q_{2})| \geq \dots \geq |V(Q_{p})|$.
The next lemma follows directly:
\begin{lemma}
\label{lemma:KNlegithit}
For every graph $G$ with $|V(G)| \geq 3$ and for all $v \in V(G)$, a set $H \subseteq E(G)$ is a hitting set of $\mathcal{B}$, the canonical line-bramble for $v$, if and only if every component of $G-H$ has at most $\frac{|V(G)|}{2}$ vertices, and $v$ is not in a component that contains exactly $\frac{|V(G)|}{2}$ vertices.
\end{lemma}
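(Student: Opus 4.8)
The plan is to prove both implications directly from the definitions, the whole argument hinging on one elementary observation: if $X$ is a connected subgraph of $G$ with $E(X) \cap H = \emptyset$, then every edge of $X$ survives in $G-H$, so $X$ is a connected subgraph of $G-H$ and hence $V(X)$ lies entirely within a single connected component of $G-H$.

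For the forward direction I would assume $H$ is a hitting set of $\mathcal{B}$ and argue by contradiction. Suppose some component $C$ of $G-H$ violates the stated condition, i.e.\ either $|V(C)| > \frac{|V(G)|}{2}$, or $|V(C)| = \frac{|V(G)|}{2}$ and $v \in V(C)$. Viewing $C$ as a connected subgraph of $G$, the definition of the canonical line-bramble for $v$ gives $C \in \mathcal{B}$. But every edge of $C$ is an edge of $G-H$, so $E(C) \cap H = \emptyset$, contradicting that $H$ intersects every element of $\mathcal{B}$.

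For the converse I would assume every component of $G-H$ has at most $\frac{|V(G)|}{2}$ vertices and that $v$ is in no component of size exactly $\frac{|V(G)|}{2}$, and take an arbitrary $X \in \mathcal{B}$. If $E(X) \cap H = \emptyset$, the observation above places $V(X)$ inside a single component $C$ of $G-H$, so $|V(X)| \leq |V(C)| \leq \frac{|V(G)|}{2}$. The definition of $\mathcal{B}$ then forces $|V(X)| = \frac{|V(G)|}{2}$ with $v \in V(X)$; combined with $|V(C)| = \frac{|V(G)|}{2}$ this gives $V(X) = V(C)$, hence $v \in V(C)$ and $|V(C)| = \frac{|V(G)|}{2}$, contradicting the assumption. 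So $E(X) \cap H \neq \emptyset$ for every $X \in \mathcal{B}$, i.e.\ $H$ is a hitting set.

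The argument is essentially bookkeeping and I do not expect a real obstacle; the only step needing a moment's care is the boundary case $|V(X)| = \frac{|V(G)|}{2}$, where one must observe that a connected subgraph on $|V(C)|$ vertices contained in the component $C$ must use all of $V(C)$, so that the membership of $v$ transfers. (When $|V(G)|$ is odd this case is vacuous, as already noted after the definition of the canonical line-bramble.)
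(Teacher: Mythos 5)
Your argument is correct and is essentially the argument the paper has in mind: the paper only sketches it in the paragraph preceding the lemma (a too-large component, or one of size exactly $\frac{|V(G)|}{2}$ containing $v$, would itself be an element of $\mathcal{B}$ missed by $H$) and declares the lemma to follow directly. Your write-up just makes both directions of that observation explicit, including the easy converse, so there is nothing to add.
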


Note the similarity between this characterisation and the \emph{bisection width} of a graph (see \cite{bisone,bistwo}, for example), which is the minimum number of edges between any $A,B \subset V(G)$ where $A \cap B = \emptyset$ and $|A|=\floor{\frac{|V(G)|}{2}}$ and $|B|=\ceil{\frac{|V(G)|}{2}}$. (Later we show that most of our components have maximum or almost maximum allowable order.) 

\begin{lemma}
\label{lemma:minHrule}
Let $G$ be a graph with $|V(G)| \geq 3$, $v$ a vertex of $G$. and $H$ a minimum hitting set for $\mathcal{B}$, the canonical line-bramble for $v$. Then no edge of $H$ has both endpoints in the same component of $G-H$.
\end{lemma}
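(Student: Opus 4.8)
The plan is to argue by contradiction: suppose $H$ is a minimum hitting set for the canonical line-bramble $\mathcal{B}$ for $v$, but some edge $e = xy \in H$ has both endpoints in the same component $C$ of $G - H$. I would then show that $H' := H \setminus \{e\}$ is still a hitting set for $\mathcal{B}$, contradicting minimality of $H$. By Lemma~\ref{lemma:KNlegithit}, it suffices to check that every component of $G - H'$ has at most $\frac{|V(G)|}{2}$ vertices, and that $v$ lies in no component of $G - H'$ with exactly $\frac{|V(G)|}{2}$ vertices.

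The key observation is that $G - H'$ is obtained from $G - H$ by adding back the single edge $e = xy$, and since $x$ and $y$ already lie in the same component $C$ of $G - H$, adding $e$ does not merge any two components: the component structure of $G - H'$ is identical to that of $G - H$ (same vertex partition into components; only $C$ gains an internal edge). Therefore every component of $G - H'$ has the same vertex set as the corresponding component of $G - H$, so all the size conditions from Lemma~\ref{lemma:KNlegithit} that held for $H$ continue to hold for $H'$. Hence $H'$ is a hitting set of $\mathcal{B}$ of size $|H| - 1$, contradicting the minimality of $H$.

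I expect this proof to be short, and the only point requiring care is the elementary graph-theoretic fact that adding an edge within an existing component does not change the partition of the vertex set into connected components — this is immediate but worth stating explicitly, since the entire argument hinges on it. There is no real obstacle here; the lemma is essentially a normalization statement saying that a minimum hitting set never ``wastes'' an edge inside a component, and it will presumably be used later to assume, without loss of generality, that hitting sets are edge cuts between the components they induce.
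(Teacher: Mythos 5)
Your proposal is correct and matches the paper's own proof: both remove the offending edge $e$ from $H$, observe that the components of $G-H$ and $G-(H-e)$ have the same vertex sets, invoke Lemma~\ref{lemma:KNlegithit} to conclude $H-e$ is still a hitting set, and contradict minimality. No differences worth noting.
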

\begin{proof}
%Also, we can assume $H$ only contains edges with each end in distinct components---otherwise, remove any edge with both endpoints in the same component. By Lemma~\ref{lemma:KNlegithit}, what remains is still a hitting set, but it contains fewer edges.
For the sake of a contradiction assume that both endpoints of an edge $e \in H$ are in the same component of $G-H$. Then consider the set $H-e$. By Lemma~\ref{lemma:KNlegithit}, $H-e$ is a hitting set of $\mathcal{B}$, since the vertex sets of the components of $G-H$ have not changed. But $H-e$ is smaller than the minimum hitting set $H$, a contradiction.
\end{proof}

%In order to construct a tree-decomposition of $L(G)$, we define the following:
%
%\begin{definition}
%A \emph{line-tree-decomposition of $G$} is a pair $(T,\{A_{x} \subseteq E(G) : x \in V(T)\})$ such that:
%\begin{itemize*}
%\item $T$ is a tree.
%\item $\{A_{x} \subseteq E(G) :x \in V(T)\}$ is a collection of sets of edges of $G$, each called a \emph{bag}, indexed by the nodes of $T$.
%\item For all $uw \in E(G)$, the nodes of $T$ indexing the bags containing $uw$ induce a non-empty (connected) subtree of $T$.
%\item If two edges $uv,vw \in E(G)$ are incident at a vertex, then some bag $A_{x}$ contains both $uv$ and $vw$. 
%\end{itemize*}
%The \emph{width} of this line-tree-decomposition is $\max\{|A_{x}| -1 :x \in V(T)\}$.
%\end{definition}
%
%\begin{lemma}
%\label{lemma:KNltree}
%Given a line-tree-decomposition of $G$, there is a tree-decomposition of $L(G)$ of the same width.
%\end{lemma}
%\begin{proof}
%For each bag of $\{A_{x} \subseteq E(G):x \in V(T)\}$, replace each edge $uw \in G$ with the equivalent vertex $uw \in L(G)$. Then this result follows directly from the defintion of a line graph and the defintion of a tree-decomposition.
%\end{proof}
%
%Hence, in order to determine an upper bound on the treewidth $\tw(L(G))$, it is sufficient to construct a line-tree-decomposition of $G$ of small width.

\section{Line Graph of the Complete Graph}
\label{section:KN}
We now prove Theorem~\ref{theorem:KN}. Let $G:=K_{n}$. When $n \leq 2$, Theorem~\ref{theorem:KN} holds trivially, so we can assume $n \geq 3$. Firstly, we shall determine a lower bound by considering the canonical line-bramble for $v$, denoted $\mathcal{B}$. Given that $K_n$ is regular, we shall choose vertex $v$ of $K_n$ arbitrarily. %Since $K_{n}$ is regular, note that $v$ is just an arbitrary vertex.

If $H$ is a minimum hitting set of $\mathcal{B}$, label the components of $G-H$ as $Q_{1}, \dots , Q_{p}$ such that $|V(Q_{1})| \geq |V(Q_{2})| \geq \dots \geq |V(Q_{p})|$. We refer to this as labelling the components \emph{descendingly}.

Consider a pair of components $(Q_{i},Q_{j})$ where $i<j$ and the components are labelled descendingly. We call this a \emph{good pair} if one of the following conditions hold:
\begin{enumerate*}
\item $|V(Q_{i})| < \frac{n}{2} - 1$,
\item $n$ is even, $|V(Q_{i})| = \frac{n}{2} - 1$, $V(Q_{j}) \neq \{v\}$, and $v \notin V(Q_{i})$.
\end{enumerate*}

\begin{lemma}
\label{lemma:KNnogoodpair}
Let $G$ be the complete graph with $n \geq 3$ vertices, $v$ a vertex of $G$, $\mathcal{B}$ be the canonical line-bramble for $v$ of $G$, and $H$ a minimum hitting set of $\mathcal{B}$. If $Q_{1}, \dots, Q_{p}$ are the components of $G-H$ labelled descendingly, then $Q_{1}, \dots, Q_{p}$ does not contain a good pair. %This has got to be too much, right?
\end{lemma}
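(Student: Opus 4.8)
The plan is to argue by contradiction: suppose $Q_1,\dots,Q_p$ contains a good pair $(Q_i,Q_j)$, and use it to build a strictly smaller hitting set of $\mathcal{B}$, contradicting minimality of $H$. The intuition is that a good pair consists of two ``small'' components, and since $G=K_n$ is complete, the number of edges of $H$ running between $V(Q_i)$ and the rest of the graph is exactly $|V(Q_i)|(n-|V(Q_i)|)$ (every such pair of vertices is an edge, and by Lemma~\ref{lemma:minHrule} no edge of $H$ lies inside a component). The idea is to \emph{move} the vertices of $Q_i$ (or of $Q_j$, whichever is more convenient) into another component — effectively merging $Q_i$ with $Q_j$, or redistributing $Q_i$'s vertices — so that the resulting partition of $V(G)$ still satisfies the condition of Lemma~\ref{lemma:KNlegithit} (every part has at most $\tfrac{n}{2}$ vertices, and $v$'s part has fewer than $\tfrac{n}{2}$ unless it is a singleton exception), but the number of edges of $K_n$ crossing between parts has strictly decreased. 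By Lemma~\ref{lemma:KNlegithit} the crossing edges of the new partition form a hitting set, and it is strictly smaller than $H$ — contradiction.

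The key computation is the following: if we take a component $Q$ with $a=|V(Q)|$ vertices and split its vertices arbitrarily to be absorbed into the other components, or merge two components of sizes $a$ and $b$ into one of size $a+b$, the change in the number of crossing edges in $K_n$ is easy to track because crossing edges between two fixed parts of sizes $s,t$ number exactly $st$. Merging $Q_i$ (size $a$) into $Q_j$ (size $b$, with $b\le a$) removes the $ab$ edges previously between them, so the count drops by $ab\ge 1$; the only thing to check is feasibility, i.e.\ that $a+b\le \tfrac{n}{2}$ and that this does not violate the condition on $v$. Case~1 of the good-pair definition gives $a=|V(Q_i)|\le \tfrac{n}{2}-2$ (since $a<\tfrac{n}{2}-1$ and, when $n$ is odd, $a\le \tfrac{n-1}{2}$ forces $a\le\tfrac{n}{2}-1$; one must be slightly careful with parity here), hence $a+b\le 2a\le n-4<\tfrac n2$ fails in general — so merging is not always legal and instead one should redistribute $Q_i$'s vertices among \emph{several} other components, or merge $Q_j$ into $Q_i$. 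The cleanest route is: in Case~1, move the vertices of $Q_j$ into $Q_i$ if $|V(Q_i)|+|V(Q_j)|\le\tfrac n2$, and otherwise move enough vertices of $Q_i$ into $Q_j$ to balance them (both end up with at most $\lceil\tfrac{a+b}{2}\rceil\le\tfrac n2$ vertices since $a\le\tfrac n2-1$), checking in each subcase that the crossing-edge count in $K_n$ strictly drops and that $v$'s component, if it has exactly $\tfrac n2$ vertices, is avoided. In Case~2 ($n$ even, $|V(Q_i)|=\tfrac n2-1$, $V(Q_j)\ne\{v\}$, $v\notin V(Q_i)$), merging $Q_i$ with $Q_j$ is illegal when $|V(Q_j)|\ge 2$, so one instead moves a single vertex of $Q_j$ (chosen $\ne v$, possible since $V(Q_j)\ne\{v\}$) into $Q_i$, bringing $Q_i$ up to $\tfrac n2$ vertices while still not containing $v$, and argues the crossing count drops.

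I expect the main obstacle to be the bookkeeping of the crossing-edge count under the chosen vertex move: one must verify that in every subcase the total number of edges of $K_n$ between distinct parts strictly decreases, which amounts to checking an inequality of the form ``edges removed $>$ edges added'' where edges added appear because the moved vertices acquire new neighbours across the parts they left. Because $G=K_n$, these counts are explicit quadratics in the part sizes, so each subcase reduces to a short arithmetic inequality using the size bounds supplied by the good-pair definition (and the descending labelling $|V(Q_i)|\ge|V(Q_j)|$); the parity conditions ($n$ odd vs.\ even) are exactly what make Case~1 vs.\ Case~2 behave differently, and handling the boundary value $|V(Q_i)|=\tfrac n2-1$ with the $v$-exception is the delicate point. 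Once feasibility and the strict decrease are established, Lemma~\ref{lemma:KNlegithit} immediately yields the contradiction.
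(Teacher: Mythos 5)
Your overall framework matches the paper's: argue by contradiction, use Lemma~\ref{lemma:KNlegithit} to recognise hitting sets as edge sets whose removal leaves only sufficiently small components, and exploit that in $K_n$ the edges crossing between parts are counted exactly by products of part sizes; your treatment of the second type of good pair (move a single vertex of $Q_j$ chosen $\neq v$ into $Q_i$, bringing it to exactly $\frac{n}{2}$ vertices without $v$) is precisely the paper's move. However, your handling of the first type contains a genuine gap. When $|V(Q_i)|+|V(Q_j)|>\frac{n}{2}$ you propose to move vertices from $Q_i$ into $Q_j$ so as to \emph{balance} the two parts. For a fixed total $a+b$, the number of $K_n$-edges between two parts of sizes $a'$ and $b'$ is $a'b'$, which is \emph{maximised} (not minimised) when the sizes are balanced, while the edges from $Q_i\cup Q_j$ to the remaining components are unchanged by such a move. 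So your rebalancing step weakly increases the crossing count and produces no strictly smaller hitting set, hence no contradiction. Concretely, take $n=20$ with components of sizes $8,8,4$: the two size-$8$ components form a good pair of the first type with $a+b=16>10$, they are already balanced, and your prescription moves nothing at all.

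The fix is to move in the opposite (unbalancing) direction, i.e.\ to use in type 1 the very same single-vertex transfer you reserved for type 2: move one vertex $x\in V(Q_j)$ into $Q_i$. Then $|V(Q_i)\cup\{x\}|=|V(Q_i)|+1<\frac{n}{2}$, so the new partition is legal by Lemma~\ref{lemma:KNlegithit}, and the crossing count changes by $-|V(Q_i)|+|V(Q_j)|-1\le -1$ because the descending labelling gives $|V(Q_j)|\le|V(Q_i)|$; this contradicts minimality of $H$ and is exactly the paper's proof, handling both types uniformly. A secondary, smaller issue: in your merge sub-case $a+b\le\frac{n}{2}$ you would still need to deal with the boundary situation $a+b=\frac{n}{2}$ with $v\in V(Q_i)\cup V(Q_j)$, which your sketch only gestures at; the single-vertex move makes this complication disappear in type 1, since the enlarged component stays strictly below $\frac{n}{2}$.
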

\begin{proof}
Say $(Q_{i},Q_{j})$ is a good pair. Let $x$ be a vertex of $Q_{j}$, such that if $(Q_{i},Q_{j})$ is of the second type, then $x \neq v$. Let $H'$ be the set of edges obtained from $H$ by removing the edges from $x$ to $Q_{i}$ and adding the edges from $x$ to $Q_{j}$. Then the components for $G-H'$ are $Q_{1}, \dots, Q_{i-1}, Q_{i} \cup \{x\}, Q_{i+1}, \dots, Q_{j-1}, Q_{j}-\{x\}, Q_{j+1}, \dots Q_{p}$. By Lemma~\ref{lemma:KNlegithit}, to ensure $H'$ is a hitting set, we only need to ensure that $V(Q_{i}) \cup \{x\}$ is sufficiently small, since all other components are the same as in $H$, or smaller. If $(Q_{i},Q_{j})$ is of the first type, then $|V(Q_{i}) \cup \{x\}| = |V(Q_{i})|+1 < \frac{n}{2}$. If $(Q_{i},Q_{j})$ is of the second type, $|V(Q_{i}) \cup \{x\}| = \frac{n}{2}$, but it does not contain $v$. Thus, by Lemma~\ref{lemma:KNlegithit}, $H'$ is a hitting set.
However, $|H'| = |H| - |V(Q_{i})| + |V(Q_{j})|-1 \leq |H|-1$, which contradicts that $H$ is a minimum hitting set.
\end{proof}

\begin{lemma}
\label{lemma:KNthreecomp}
Let $G,v,\mathcal{B}$ and $H$ be as in Lemma~\ref{lemma:KNnogoodpair}. Then $G-H$ has exactly three components.
\end{lemma}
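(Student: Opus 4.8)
The plan is to squeeze the number of components $p$ from both sides. For the lower bound $p \ge 3$, I would argue directly: if $p = 1$ then $G - H$ is connected on all $n$ vertices, contradicting Lemma~\ref{lemma:KNlegithit} (every component has at most $\frac{n}{2}$ vertices); and if $p = 2$ then $|V(Q_1)| + |V(Q_2)| = n$ with each summand at most $\frac{n}{2}$, which is impossible for odd $n$ and, for even $n$, forces both components to have exactly $\frac{n}{2}$ vertices, so that $v$ lies in a component of size $\frac{n}{2}$ --- again contradicting Lemma~\ref{lemma:KNlegithit}.

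For the upper bound, the key consequence of Lemma~\ref{lemma:KNnogoodpair} that I would extract is that $|V(Q_i)| \ge \frac{n}{2} - 1$ for every $i \le p-1$ (equivalently $|V(Q_i)| \ge \frac{n-1}{2}$ when $n$ is odd), since otherwise $(Q_i, Q_p)$ would be a good pair of the first type. Assuming for contradiction that $p \ge 4$, the components $Q_1, Q_2, Q_3$ then each have at least $\frac{n}{2} - 1$ vertices and $Q_4$ has at least one, so
$$ n = \sum_{i=1}^{p} |V(Q_i)| \ge 3\left(\frac{n}{2} - 1\right) + 1 = \frac{3n}{2} - 2, $$
forcing $n \le 4$; since $n \ge 3$, only $n \in \{3,4\}$ survive. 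For $n = 3$ the sharper bound $|V(Q_i)| \ge 1$ already yields $\sum_{i=1}^{4} |V(Q_i)| \ge 4 > 3$, a contradiction. For $n = 4$ the bound $|V(Q_i)| \ge 1$ together with $\sum_i |V(Q_i)| = 4$ and $p \ge 4$ forces $p = 4$ with every $Q_i$ a single vertex; at most one of these singletons equals $\{v\}$, so there are two of them, $Q_i$ and $Q_j$ with $i < j$, with $v \notin V(Q_i)$ and $V(Q_j) \ne \{v\}$, making $(Q_i, Q_j)$ a good pair of the second type --- again contradicting Lemma~\ref{lemma:KNnogoodpair}. Hence $p \le 3$, and combined with $p \ge 3$ this gives $p = 3$.

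Overall this is a short counting argument, and I do not expect a genuine obstacle. The only points that will need care are the two boundary values $n = 3$ and $n = 4$, where the averaging inequality $\frac{3n}{2} - 2 > n$ just barely fails and one must fall back on integrality or on a good pair of the second type, and the floor/ceiling bookkeeping when $n$ is odd, so that ``$|V(Q_i)| \ge \frac{n}{2} - 1$'' is correctly read as the integer bound $|V(Q_i)| \ge \frac{n-1}{2}$.
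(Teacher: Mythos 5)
Your proposal is correct and follows essentially the same route as the paper: the identical case analysis for $p\ge 3$, and for $p\le 3$ the same counting argument that the absence of a first-type good pair forces all but the last component to have at least $\frac{n}{2}-1$ (resp.\ $\frac{n-1}{2}$) vertices, with the boundary case $n=4$ resolved by exhibiting a second-type good pair among the singleton components avoiding $v$. The only difference is cosmetic bookkeeping (you fold both parities into one inequality before splitting off $n\in\{3,4\}$, while the paper splits by parity from the start).
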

\begin{proof}
Recall by Lemma~\ref{lemma:KNlegithit}, we have an upper bound on the order of the components of $G-H$. 
Firstly, we show that $G-H$ has at least three components. If $G-H$ has only one component, clearly this component is too large. If $G-H$ has two components and $n$ is odd, then one of the components must have more than $\frac{n}{2}$ vertices. If $G-H$ has two components and $n$ is even, it is possible that both components have exactly $\frac{n}{2}$ vertices, however one of these components must contain $v$. Thus $G-H$ has at least three components. Now, assume $G-H$ has at least four components. We will show that it has a good pair, contradicting Lemma~\ref{lemma:KNnogoodpair}. Label the components of $G-H$ descendingly.

If $n$ is odd, we have a good pair of the first type when any two components have less than $\frac{n-1}{2}$ vertices. Thus at least three components have order at least $\frac{n-1}{2}$. Then $|V(G)| \geq 3(\frac{n-1}{2})+1>n$ when $n \geq 2$, which is a contradiction.

If $n$ is even, we have the first type of good pair whenever two components have less than $\frac{n}{2}-1$ vertices. Similarly to the previous case, $|V(G)| \geq 3(\frac{n}{2} - 1) +1 >n$, again a contradiction when $n>4$. If $n=4$ then each component is a single vertex. Take $Q_{i},Q_{j}$ to be two of these components, neither of which contain the vertex $v$. Then $(Q_{i},Q_{j})$ is a good pair of the second type.
Hence $G-H$ does not have more than three components, and as such it has exactly three components.
\end{proof}

\begin{lemma}
\label{lemma:KNcompbd}
Let $G,v,\mathcal{B}$ and $H$ be as in Lemma~\ref{lemma:KNnogoodpair}, and the components of $G-H$ be labelled descendingly.
If $n$ is odd then $|V(Q_{1})|=|V(Q_{2})|=\frac{n-1}{2}$ and $|V(Q_{3})|=1$. 
If $n$ is even then $|V(Q_{1})|= \frac{n}{2}, |V(Q_{2})|=\frac{n}{2}-1$ and $|V(Q_{3})|=1$.
\end{lemma}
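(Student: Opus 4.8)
The plan is to combine the structural facts already established: by Lemma~\ref{lemma:KNthreecomp} there are exactly three components $Q_1, Q_2, Q_3$ with $|V(Q_1)| \geq |V(Q_2)| \geq |V(Q_3)|$ and $|V(Q_1)|+|V(Q_2)|+|V(Q_3)| = n$, and by Lemma~\ref{lemma:KNlegithit} every component has at most $\tfrac{n}{2}$ vertices (with $v$ forbidden from a component of size exactly $\tfrac n2$). First I would split into the odd and even cases. In the odd case, $\tfrac n2$ is not an integer so each component has at most $\tfrac{n-1}{2}$ vertices; since the three sizes sum to $n$, we need $|V(Q_1)|+|V(Q_2)| \geq n - 1 > \tfrac{n-1}{2}$, so $|V(Q_2)| \geq \tfrac{n-1}{2} - |V(Q_3)| + 1$... more directly, if $|V(Q_2)| < \tfrac{n-1}{2}$ then $|V(Q_2)| \leq \tfrac{n-3}{2}$, whence $|V(Q_1)| \geq n - \tfrac{n-3}{2} - |V(Q_3)| \geq \tfrac{n+3}{2} - |V(Q_3)|$; combined with $|V(Q_1)| \leq \tfrac{n-1}{2}$ this forces $|V(Q_3)| \geq 2$, but then $(Q_2, Q_3)$ is a good pair of the first type (both have size $< \tfrac n2 - 1$ when... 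I need to check the threshold), contradicting Lemma~\ref{lemma:KNnogoodpair}. So $|V(Q_1)| = |V(Q_2)| = \tfrac{n-1}{2}$ and the remaining vertex count gives $|V(Q_3)| = 1$.

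For the even case the argument is analogous but uses the second type of good pair to handle the boundary. Here each component has at most $\tfrac n2$ vertices. I would first argue $|V(Q_1)| = \tfrac n2$: if $|V(Q_1)| \leq \tfrac n2 - 1$ then all three components have size $\leq \tfrac n2 - 1$, so the sizes sum to at most $3(\tfrac n2 - 1) $, which is consistent, but I can instead show that two components of size $\leq \tfrac n2 - 2$ cannot both occur (good pair of the first type) and that a configuration avoiding this while summing to $n$ with all parts $\leq \tfrac n2 - 1$ is impossible for $n$ large, handling small $n$ by hand as in Lemma~\ref{lemma:KNthreecomp}. Then with $|V(Q_1)| = \tfrac n2$, the vertex $v$ lies in $Q_2$ or $Q_3$. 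If $|V(Q_2)| \leq \tfrac n2 - 2$ then $|V(Q_3)| = n - \tfrac n2 - |V(Q_2)| \geq 2$ as well, and $(Q_2, Q_3)$ (with $v \notin V(Q_1)$ automatically, and $V(Q_3) \neq \{v\}$ achievable since $|V(Q_3)| \geq 2$) is a good pair of the first type — contradiction. Hence $|V(Q_2)| = \tfrac n2 - 1$, and then $|V(Q_3)| = 1$.

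The main obstacle I anticipate is getting the good-pair thresholds exactly right and making sure the second-type good pair (the even, boundary case) is correctly invoked: the definition requires $V(Q_j) \neq \{v\}$ and $v \notin V(Q_i)$, so I must be careful that when I point to a good pair, the smaller component genuinely contains a vertex other than $v$ (which is why establishing $|V(Q_3)| \geq 2$ before invoking it matters) and that $v$ is not in the larger component of the pair. The small-$n$ cases (particularly $n = 4$ and perhaps $n = 3, 5, 6$) should be checked directly, mirroring the case analysis in the proof of Lemma~\ref{lemma:KNthreecomp}, since the asymptotic counting inequalities only bite once $n$ is large enough.
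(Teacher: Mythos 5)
Your odd case follows the paper's argument and is correct, though the detour through $|V(Q_3)| \geq 2$ is unnecessary: a good pair of the first type constrains only the larger component $Q_i$ of the pair, so $|V(Q_2)| \leq \frac{n-3}{2} < \frac{n}{2}-1$ already makes $(Q_2,Q_3)$ a good pair with no condition on $Q_3$ whatsoever (the conditions involving $v$ and $V(Q_j)\neq\{v\}$ belong only to the second type).

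The genuine gap is in the even case, in your plan to first establish $|V(Q_1)| = \frac{n}{2}$ using only first-type good pairs and counting. Your claim that a configuration with all parts at most $\frac{n}{2}-1$, summing to $n$ and avoiding two components of size at most $\frac{n}{2}-2$, is ``impossible for $n$ large'' is false: the distribution $|V(Q_1)| = |V(Q_2)| = \frac{n}{2}-1$, $|V(Q_3)| = 2$ satisfies all of these constraints for every even $n \geq 6$ and contains no good pair of the first type, since in every pair the larger component has size exactly $\frac{n}{2}-1$. This configuration is precisely the one that cannot be excluded without the second type of good pair, and it is where the paper invokes it: from the absence of first-type pairs one gets $\frac{n}{2}-1 \leq |V(Q_2)| \leq |V(Q_1)| \leq \frac{n}{2}$, hence $|V(Q_3)| \in \{1,2\}$; if $|V(Q_3)| = 2$, then $|V(Q_1)|=|V(Q_2)|=\frac{n}{2}-1$, at least one of $Q_1,Q_2$ does not contain $v$, and $V(Q_3) \neq \{v\}$ because $|V(Q_3)|=2$, so that component together with $Q_3$ is a second-type good pair, contradicting Lemma~\ref{lemma:KNnogoodpair}; hence $|V(Q_3)|=1$ and the sizes follow. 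Your subsequent step (given $|V(Q_1)|=\frac{n}{2}$, ruling out $|V(Q_2)| \leq \frac{n}{2}-2$) is sound, but the side conditions you attach there are superfluous for a first-type pair, and note that the second-type condition on $v$ refers to the larger component of the pair, so for $(Q_2,Q_3)$ it would read $v \notin V(Q_2)$, not $v \notin V(Q_1)$.
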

\begin{proof}
Lemma~\ref{lemma:KNthreecomp} shows that $G-H$ has exactly three components.%, and also, we recall that $|V(Q_{1})| \geq |V(Q_{2})| \geq |V(Q_{3})|$. 
By Lemma~\ref{lemma:KNnogoodpair}, $(Q_{2},Q_{3})$ is not a good pair. Hence $|V(Q_{1})| \geq |V(Q_{2})| \geq \frac{n-1}{2}$ when $n$ is odd, and $|V(Q_{1})| \geq |V(Q_{2})| \geq \frac{n}{2}-1$ when $n$ is even, or else we have a good pair of the first type. By Lemma~\ref{lemma:KNlegithit}, when $n$ is odd, $|V(Q_{1})| = |V(Q_{2})| = \frac{n-1}{2}$ and so $|V(Q_{3})|=1$.
When $n$ is even, however, $\frac{n}{2}-1 \leq |V(Q_{1})|,|V(Q_{2})| \leq \frac{n}{2}$. Since $Q_{3}$ is not empty, it follows that $|V(Q_{3})|=1$ or $2$. If $|V(Q_{3}|=1$, then $|V(Q_{1})|= \frac{n}{2}, |V(Q_{2})|=\frac{n}{2}-1$ and $|V(Q_{3})|=1$, as required. Otherwise, $|V(Q_{1})|,|V(Q_{2})|=\frac{n}{2}-1$. But then at least one of $Q_{1},Q_{2}$ does not contain $v$, and $V(Q_{3}) \neq \{v\}$. Thus either $(Q_{1},Q_{3})$ or $(Q_{2},Q_{3})$ is a good pair of the second type, contradicting Lemma~\ref{lemma:KNnogoodpair}. 
\end{proof}

\begin{lemma}
\label{lemma:KNHorder}
Let $G,v,\mathcal{B}$ and $H$ be as in Lemma~\ref{lemma:KNnogoodpair}. Then $|H| \geq (\frac{n-1}{2})(\frac{n-1}{2}) + (n-1)$ when $n$ is odd, and $|H| \geq (\frac{n-2}{2})(\frac{n}{2}) + (n-1)$ when $n$ is even.
\end{lemma}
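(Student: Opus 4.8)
The plan is to notice that all the real work has already been done in Lemma~\ref{lemma:KNcompbd}: the sizes of the three components of $G-H$ are completely pinned down there, so what remains is just to count the edges of $H$. First I would invoke Lemma~\ref{lemma:KNcompbd} to name the three components of $G-H$ as $Q_1,Q_2,Q_3$, with $(|V(Q_1)|,|V(Q_2)|,|V(Q_3)|)$ equal to $(\frac{n-1}{2},\frac{n-1}{2},1)$ when $n$ is odd and $(\frac{n}{2},\frac{n}{2}-1,1)$ when $n$ is even.

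Next I would identify $H$ exactly. Since $G=K_n$, any two vertices in distinct components of $G-H$ are adjacent in $G$; such an edge cannot be present in $G-H$ (its endpoints would then lie in the same component), so it must belong to $H$. Conversely, by Lemma~\ref{lemma:minHrule}, no edge of $H$ has both endpoints in the same component of $G-H$. Hence $H$ is precisely the set of edges of $K_n$ joining two distinct components, and therefore $|H| = |V(Q_1)|\,|V(Q_2)| + |V(Q_1)|\,|V(Q_3)| + |V(Q_2)|\,|V(Q_3)|$.

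Finally I would substitute the component sizes. For $n$ odd this gives $(\frac{n-1}{2})(\frac{n-1}{2}) + \frac{n-1}{2} + \frac{n-1}{2} = (\frac{n-1}{2})(\frac{n-1}{2}) + (n-1)$, and for $n$ even it gives $\frac{n}{2}(\frac{n}{2}-1) + \frac{n}{2} + (\frac{n}{2}-1) = (\frac{n-2}{2})(\frac{n}{2}) + (n-1)$, which is exactly the claimed bound (in fact with equality, so the stated inequality follows a fortiori). I do not expect any genuine obstacle here: the combinatorial heavy lifting is entirely contained in Lemmas~\ref{lemma:KNnogoodpair}--\ref{lemma:KNcompbd} and in Lemma~\ref{lemma:minHrule}, and the present lemma is just the bookkeeping step that turns ``component sizes'' into ``size of the cut''.
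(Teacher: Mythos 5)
Your proposal is correct and follows essentially the same route as the paper: take the component sizes from Lemma~\ref{lemma:KNcompbd}, note that since $G$ is complete $H$ must contain every edge joining two distinct components, and count those edges. The extra appeal to Lemma~\ref{lemma:minHrule} to upgrade the bound to an exact equality is fine but not needed, since the lemma only asserts the lower bound.
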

\begin{proof}
From Lemma~\ref{lemma:KNcompbd} we know the order of the components of $G-H$. $H$ contains at least every edge between each pair of components, and since $G$ is complete there is an edge for each pair of vertices. From this it is easy to calculate $|H|$.
\end{proof}

Lemma~\ref{lemma:KNHorder} and the Treewidth Duality Theorem imply:

\begin{corollary}
\label{corollary:KNbrno}
Let $G$ be the complete graph with $n \geq 3$ vertices. $$\tw(L(K_{n})) = \bn(L(K_{n})) -1 \geq 
\begin{cases}
(\frac{n-1}{2})(\frac{n-1}{2}) + (n-2) & \text{, if $n$ is odd} \\
(\frac{n-2}{2})(\frac{n}{2}) + (n-2) & \text{, if $n$ is even.}
\end{cases}
$$
\end{corollary}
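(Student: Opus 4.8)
The plan is simply to chain together the machinery already set up, since all the genuine work has been front-loaded into the structural lemmas. Fix an arbitrary vertex $v$ of $K_n$ (possible since $n\geq 3$) and let $\mathcal{B}$ be the canonical line-bramble for $v$, which is a line-bramble of $K_n$ by Lemma~\ref{lemma:KNislbram}. Let $H$ be a minimum hitting set of $\mathcal{B}$; by definition the order of $\mathcal{B}$ is exactly $|H|$. Lemma~\ref{lemma:KNHorder} then gives $|H|\geq(\frac{n-1}{2})(\frac{n-1}{2})+(n-1)$ when $n$ is odd and $|H|\geq(\frac{n-2}{2})(\frac{n}{2})+(n-1)$ when $n$ is even.

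Next I would transport this to the line graph. By Lemma~\ref{lemma:KNlbisb}, the line-bramble $\mathcal{B}$ yields a bramble $\mathcal{B}'$ of $L(K_n)$ of the same order $|H|$. Since the bramble number is the maximum order over all brambles, $\bn(L(K_n))\geq|H|$. Applying the Treewidth Duality Theorem, $\tw(L(K_n))+1=\bn(L(K_n))\geq|H|$, so $\tw(L(K_n))\geq|H|-1$; this also furnishes the identity $\tw(L(K_n))=\bn(L(K_n))-1$ that appears in the displayed inequality.

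Finally, substitute the two cases of Lemma~\ref{lemma:KNHorder}. For $n$ odd, $\tw(L(K_n))\geq(\frac{n-1}{2})(\frac{n-1}{2})+(n-1)-1=(\frac{n-1}{2})(\frac{n-1}{2})+(n-2)$, and for $n$ even, $\tw(L(K_n))\geq(\frac{n-2}{2})(\frac{n}{2})+(n-1)-1=(\frac{n-2}{2})(\frac{n}{2})+(n-2)$, as claimed.

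There is no real obstacle in this step: the content lies entirely in the preceding analysis (the ``no good pair'' argument of Lemma~\ref{lemma:KNnogoodpair}, the three-component count of Lemma~\ref{lemma:KNthreecomp}, the component sizes of Lemma~\ref{lemma:KNcompbd}, and the edge count of Lemma~\ref{lemma:KNHorder}). The only things to be careful about are the bookkeeping of the ``$-1$'' coming from the width convention inside the Treewidth Duality Theorem, and checking that the three quantities ``order of $\mathcal{B}$'', ``order of $\mathcal{B}'$'', and ``$|H|$'' coincide, which is precisely what Lemma~\ref{lemma:KNlbisb} guarantees.
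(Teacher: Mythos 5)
Your argument is correct and is exactly the paper's (implicit) proof: the corollary is obtained by combining Lemma~\ref{lemma:KNHorder} with Lemma~\ref{lemma:KNlbisb} and the Treewidth Duality Theorem, with the only bookkeeping being the ``$-1$'' in $\tw = \bn - 1$. Nothing is missing.
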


%Now, to obtain an upper bound on $\tw(L(G))$, we construct a line-tree-decomposition of $G$. First, label the vertices of $G$ by $1, \dots, n$. Let $T$ be an $n$-node path, also labelled by $1, \dots, n$. The bag $A_{i}$ for the node labelled $i$, is defined such that $A_{i} = \{ij : j \in V(G)\} \cup \{uw : u<i<w\}$. For a given $A_i$, call the edges in $\{ij : j \in V(G)\}$ \emph{initial edges} and call the edges in $\{uw : u<i<w\}$ \emph{crossover edges}. 
Now, to obtain an upper bound on $\pw(L(G))$, we construct a path decomposition of $L(G)$. First, label the vertices of $G$ by $1, \dots, n$. Let $T$ be an $n$-node path, also labelled by $1, \dots, n$. The bag $A_{i}$ for the node labelled $i$, is defined such that $A_{i} = \{ij : j \in V(G)\} \cup \{uw : u<i<w\}$. For a given $A_i$, call the edges in $\{ij : j \in V(G)\}$ \emph{initial edges} and call the edges in $\{uw : u<i<w\}$ \emph{crossover edges}. (Note here these edges of $G$ are really acting as vertices of $L(G)$, but we refer to them as edges for simplicity.)

\begin{lemma}
\label{lemma:KNtd}
Let $G$ be the complete graph with $n \geq 3$ vertices. $(T,\{A_{1}, \dots, A_{n}\})$ is a path decomposition for $L(G)$ of width

$$\begin{cases}
(\frac{n-1}{2})(\frac{n-1}{2}) + (n-2) & \text{, if $n$ is odd} \\
(\frac{n-2}{2})(\frac{n}{2}) + (n-2) & \text{, if $n$ is even.}
\end{cases}$$
\end{lemma}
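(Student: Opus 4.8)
The plan is to verify the three defining properties of a tree/path decomposition for $(T,\{A_1,\dots,A_n\})$ and then count the maximum bag size. First I would confirm the covering conditions. Each vertex of $L(G)$ is an edge $uw$ of $G$ with $u<w$; the node labelled $u$ has $uw$ as an initial edge, so every vertex appears in some bag. For the subtree (here subpath) condition, I would show the set of indices $i$ with $uw \in A_i$ is exactly the interval $\{u,u+1,\dots,w\}$: for $i<u$ or $i>w$ the edge $uw$ is neither initial (needs $i\in\{u,w\}$) nor crossover (needs $u<i<w$), while for $u\le i\le w$ it is either initial ($i=u$ or $i=w$) or crossover ($u<i<w$); intervals are connected, so this property holds. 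For the edge condition of $L(G)$: two vertices $uw$ and $u'w'$ of $L(G)$ are adjacent precisely when the edges share an endpoint of $G$, say $w=w'$ (the other cases are symmetric); then both edges are initial edges of the bag $A_w$, so they occur together. This disposes of all decomposition axioms.

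The remaining and genuinely computational step is to determine $\max_i |A_i|$ and show it equals the claimed value. I would write $|A_i| = |\{ij : j\ne i\}| + |\{uw : u<i<w\}| = (n-1) + (i-1)(n-i)$, since there are $i-1$ choices for $u$ below $i$ and $n-i$ choices for $w$ above $i$, and no initial edge is a crossover edge. So the width is $(n-1) + \max_{1\le i\le n}(i-1)(n-i) - 1 = (n-2) + \max_{1\le i\le n}(i-1)(n-i)$. The quantity $(i-1)(n-i)$ is a downward parabola in $i$ maximised near $i=(n+1)/2$: when $n$ is odd, $i=(n+1)/2$ gives $\left(\frac{n-1}{2}\right)\left(\frac{n-1}{2}\right)$; when $n$ is even, $i=n/2$ or $i=n/2+1$ gives $\left(\frac{n-2}{2}\right)\left(\frac{n}{2}\right)$. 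Substituting yields exactly the two stated expressions for the width.

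I do not expect a serious obstacle here; the main thing to be careful about is the bookkeeping that initial and crossover edges are disjoint within a bag (true because an initial edge of $A_i$ is incident to $i$ while a crossover edge straddles $i$ without being incident to it), so the sizes genuinely add, and the off-by-one accounting between "maximum bag size" and "width $=$ maximum bag size $-1$". One should also note the degenerate endpoints $i=1$ and $i=n$ contribute $(i-1)(n-i)=0$ crossover edges, consistent with the formula, and that the optimisation over the integer range $1\le i\le n$ indeed attains the unconstrained vertex of the parabola (or its two nearest integers), which it does for all $n\ge 3$. Combining this width computation with Corollary~\ref{corollary:KNbrno} will then pin down $\tw(L(K_n))=\pw(L(K_n))$ exactly, completing the proof of Theorem~\ref{theorem:KN}.
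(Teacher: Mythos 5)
Your proposal is correct and follows essentially the same route as the paper: verify that the indices containing an edge $uw$ form the interval from $u$ to $w$, note that edges sharing an endpoint $u$ lie together in $A_u$, and then maximise $|A_i|=(n-1)+(i-1)(n-i)$ over $i$, which peaks at $i=\frac{n+1}{2}$ (odd $n$) or $i=\frac{n}{2},\frac{n+2}{2}$ (even $n$), giving the stated width. Your extra care about the disjointness of initial and crossover edges and the width-versus-bag-size offset is sound but only makes explicit what the paper leaves implicit.
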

\begin{proof}
Each edge $uw$ of $G$ appears in $A_{u}$ and $A_{w}$ as initial edges. Similarly, all of the edges incident at the vertex $u$ appear in $A_{u}$, and the same holds for $w$. Observe that $uw$ is in $A_{i}$ if and only if $u \leq i \leq w$. Thus the nodes indexing the bags containing $uw$ form a connected subtree of $T$, as required.

Now we determine the size of $A_{i}$. $A_{i}$ contains $n-1$ initial edges and $(i-1)(n-i)$ crossover edges. So $|A_{i}|=(n-1) + (i-1)(n-i)$. This is maximised when $i=\frac{n+1}{2}$ if $n$ is odd, and when $i=\frac{n}{2}$ or $\frac{n+2}{2}$ if $n$ is even. From this we can calculate the largest bag size, and hence the width of $T$.
\end{proof}

Lemma~\ref{lemma:KNtd} gives an upper bound on $\pw(L(K_n))$ and $\tw(L(K_n))$. This, combined with the lower bound in Corollary~\ref{corollary:KNbrno}, completes the proof of Theorem~\ref{theorem:KN}.

%By Lemma~\ref{lemma:KNtd} and Lemma~\ref{lemma:KNltree}, we get the following Corollary:
%
%\begin{corollary}
%\label{corollary:KNtw}
%$$
%\tw(L(K_{n})) \leq
%\begin{cases}
%(\frac{n-1}{2})(\frac{n-1}{2}) + (n-2) & \text{, if $n$ is odd} \\
%(\frac{n-2}{2})(\frac{n}{2}) + (n-2) & \text{, if $n$ is even.}
%\end{cases}
%$$
%\end{corollary}
%
%Corollary~\ref{corollary:KNbrno} and Corollary~\ref{corollary:KNtw} imply Theorem~\ref{theorem:KN}.

\section{Line-brambles of the Complete Multipartite Graph}
\label{section:KNk}
We now extend the above result to the line graph $L(G)$ of the complete multipartite graph $G:=K_{n_{1}, \dots, n_{k}}$, where $k \geq 2$. Let $n:=|V(G)|=n_{1}+\dots +n_{k}$. If $n=k$, then $G=K_{n}$ and Theorem~\ref{theorem:KN} determines $\tw(G)$ exactly, so we may assume $n>k$. Let $X_{i}$ be the $i^{th}$ colour class of $G$, with order $n_{i}$. Call $X_{i}$ \emph{odd} or \emph{even} depending on the parity of $|X_{i}|$. In a similar fashion to Section~\ref{section:KN} we shall first find a line-bramble of $G$.

As we did previously, we shall consider a canonical line-bramble for $v$ denoted $\mathcal{B}$. However, we shall choose vertex $v$ from a colour class of largest order. Note that $v$ has minimum degree. Let $H$ be a hitting set of $\mathcal{B}$, and label the components of $G-H$ by $Q_{1}, \dots, Q_{p}$. Denote $H$ and the labelling of its components together as $(H, (Q_1, \dots, Q_p))$. Choose $(H, (Q_1, \dots, Q_p))$ such that the following conditions hold, in order of preference:
\begin{description*}
\item[(0)] $|H|$ is minimised,
\item[(1)] $|V(Q_{1})|$ is maximised,
\item[(2)] $|V(Q_{2})|$ is maximised,
\item[] \indent $\vdots$ 
\item[(p)] $|V(Q_{p})|$ is maximised,
\item[(p+1)] $v$ is in the component of highest possible index.
\end{description*}  
By condition \textbf{(0)}, $H$ is a minimum hitting set. Note, as a result of this that $|V(Q_{1})| \geq |V(Q_{2})| \geq \dots \geq |V(Q_{p})|$, otherwise we can keep $H$ and easily find a better choice of labelling. Call a choice of $(H, (Q_1, \dots, Q_p))$ that satisifies these conditions a \emph{good labelling}.

Consider a pair of components $(Q_{i},Q_{j})$ where $i<j$ and $Q_1, \dots, Q_p$ is from a good labelling. We call this a \emph{good pair} when for all $x \in Q_{j}$ there exists $y \in Q_{i}$ such that $xy$ is an edge, and one of the following holds:
\begin{enumerate*}
\item $|V(Q_{i})| < \frac{n}{2} - 1$,
\item $n$ is even, $|V(Q_{i})| = \frac{n}{2} - 1$, $v \notin V(Q_{i})$ and $V(Q_{j}) \cap X_{s} \neq \{v\}$ for all colour classes $X_{s}$.
\end{enumerate*}

\begin{lemma}
\label{lemma:KNknogoodpair}
Let $G$ be a complete multipartite graph $G:=K_{n_{1}, \dots, n_{k}}$ such that $k \geq 2$ and $n > k$, $v$ a vertex of $G$ chosen from a largest colour class, $\mathcal{B}$ a canonical line-bramble for $v$, and $(H, (Q_1, \dots, Q_p))$ a good labelling. Then $Q_1, \dots, Q_p$ does not contain a good pair.
\end{lemma}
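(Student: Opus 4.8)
The plan is to imitate the proof of Lemma~\ref{lemma:KNnogoodpair}: assuming a good pair $(Q_i,Q_j)$ exists (with $i<j$, so $|V(Q_i)|\ge|V(Q_j)|$), I would move a single vertex $x$ of $Q_j$ into $Q_i$ by deleting the edges from $x$ to $Q_i$ out of $H$ and adding the edges from $x$ to $Q_j-\{x\}$, and then derive a contradiction with the good labelling. Two complications must be handled that did not arise over $K_n$. First, because $G$ has non-edges, moving an \emph{arbitrary} vertex of $Q_j$ into $Q_i$ can strictly increase $|H|$, so $x$ has to be chosen carefully, exploiting the colour classes. Second, a good labelling is governed by the whole preference order \textbf{(0)}--\textbf{(p+1)}, so the contradiction may come not from minimality of $|H|$ but from one of the later conditions \textbf{(1)}--\textbf{(p)}. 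I expect the choice of $x$ to be the main obstacle; everything after it runs closely parallel to the complete-graph argument, with a lexicographic comparison of component orders in place of a direct decrease of $|H|$.

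Throughout I would use that, since $H$ is minimum, Lemma~\ref{lemma:minHrule} makes each $Q_\ell$ equal to the induced subgraph $G[V(Q_\ell)]$ and puts every edge of $G$ between two distinct components into $H$. Write $a:=|V(Q_i)|\ge b:=|V(Q_j)|$, and for a colour class $X_s$ put $a_s:=|V(Q_i)\cap X_s|$ and $b_s:=|V(Q_j)\cap X_s|$. The key claim is: \emph{there is a colour class $X_s$ with $b_s\ge1$ and $a_s-b_s\le a-b$.} This is a short counting argument: $\sum_s(a_s-b_s)=a-b$; the classes with $b_s=0$ contribute the nonnegative amount $\sum a_s$; and at least one class has $b_s\ge1$ because $b\ge1$; so if $a_s-b_s\ge a-b+1$ held for every class with $b_s\ge1$, then summing over all classes would give $a-b\ge a-b+1$. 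Fix such a colour class $X_s$ and let $x\in V(Q_j)\cap X_s$; in case~2 the hypothesis $V(Q_j)\cap X_s\ne\{v\}$ lets me additionally insist $x\ne v$. Since $G$ is complete multipartite and $x\in X_s$, the number of neighbours of $x$ in $V(Q_i)$ is $a-a_s$ and the number of neighbours of $x$ in $V(Q_j)\setminus\{x\}$ is $b-b_s$, so the claim gives $a-a_s\ge b-b_s$.

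Now let $H'$ be $H$ with the $a-a_s$ edges from $x$ to $V(Q_i)$ removed (these all lie in $H$) and the $b-b_s$ edges from $x$ to $V(Q_j)\setminus\{x\}$ added (these do not lie in $H$, by Lemma~\ref{lemma:minHrule}), so $|H'|\le|H|$. The components of $G-H'$ are the unchanged $Q_\ell$ for $\ell\ne i,j$, the subgraph $Q_i\cup\{x\}$ (connected, since $x$ has a neighbour in $Q_i$ by the good-pair definition), and the components of $G[V(Q_j)\setminus\{x\}]$, each of order at most $b-1$. I would then check with Lemma~\ref{lemma:KNlegithit} that $H'$ is a hitting set: in case~1, $|V(Q_i\cup\{x\})|=a+1<\tfrac n2$; in case~2, $|V(Q_i\cup\{x\})|=\tfrac n2$ but it omits $v$ (as $v\notin V(Q_i)$ and $x\ne v$); every piece inside $V(Q_j)\setminus\{x\}$ has fewer than $\tfrac n2$ vertices; and the unchanged components satisfy the requirement because $H$ does. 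Finally, since $H$ is minimum we get $|H'|=|H|$ (if $b=1$ the swap already gives $|H'|<|H|$, an immediate contradiction with \textbf{(0)}), so $H'$ is minimum too. Let $m$ be the number of components of $G-H$ of order greater than $a$; these are all among the $Q_\ell$ with $\ell\ne i,j$, hence unchanged. The multiset of component orders of $G-H'$ is obtained from that of $G-H$ by deleting $a$ and $b$ and inserting $a+1$ together with numbers each at most $b-1\le a$; so, sorted decreasingly, the two order-sequences agree on their first $m$ terms, while term $m+1$ is $a$ for $G-H$ and $a+1$ for $G-H'$. Hence any labelling of $G-H'$ meeting conditions \textbf{(1)}--\textbf{(p)} has order-sequence lexicographically larger than $(|V(Q_1)|,\dots,|V(Q_p)|)$, contradicting that $(H,(Q_1,\dots,Q_p))$ is a good labelling.
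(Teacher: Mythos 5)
Your proof is correct and takes essentially the same route as the paper's: an averaging argument over the colour classes meeting $Q_j$ produces a class $X_s$ for which the vertex swap removes at least as many edges from $H$ as it adds, and the contradiction then comes from condition \textbf{(0)} or from the preference on component orders. The only differences are presentational — you sum over all colour classes rather than only those meeting $Q_j$, and you phrase the final step as a lexicographic comparison of sorted component orders rather than an explicit relabelling.
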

\begin{proof}
Assume $(Q_{i},Q_{j})$ is a good pair. For each $X_{s}$ that intersects $Q_{j}$, let $x_{s}$ be some vertex of $Q_{j} \cap X_{s}$. If $(Q_{i},Q_{j})$ is of the second type, choose each $x_{s} \neq v$. Let $H_{s}$ be the set of edges created by taking $H$ and removing the edges from $x_{s}$ to $Q_{i}$, then adding the edges from $x_{s}$ to $(Q_{j} - X_s)$. Thus we have removed $|V(Q_{i})|-|V(Q_{i}) \cap X_{s}|$ edges and have added $|V(Q_{j})|-|V(Q_{j}) \cap X_{s}|$. 

Suppose that $|V(Q_{j})|-|V(Q_{j}) \cap X_{s}| > |V(Q_{i})|-|V(Q_{i}) \cap X_{s}|$ for each $X_{s}$ that intersects $Q_{j}$. Then $$\sum_{s:X_{s} \cap V(Q_{j}) \neq \emptyset} |V(Q_{j})|-|V(Q_{j}) \cap X_{s}| > \sum_{s:X_{s} \cap V(Q_{j}) \neq \emptyset} |V(Q_{i})|-|V(Q_{i}) \cap X_{s}|.$$ However, since we are cycling through all colour classes that intersect $Q_{j}$, $$\sum_{s:X_{s} \cap V(Q_{j}) \neq \emptyset} |V(Q_{j}) \cap X_{s}| = |V(Q_{j})|.$$ If there are $r$ such colour classes, then $$(r-1)|V(Q_{j})| > r|V(Q_{i})|-\sum_{s:X_{s} \cap V(Q_{j}) \neq \emptyset}|V(Q_{i}) \cap X_{s}| \geq (r-1)|V(Q_{i})|.$$
This implies $|V(Q_{j})| > |V(Q_{i})|$, which is a contradiction of condition \textbf{(i)}. Hence, for some $s$, $|V(Q_{j})|-|V(Q_{j}) \cap X_{s}| \leq |V(Q_{i})|-|V(Q_{i}) \cap X_{s}|$. Fix such an $s$.

A component of $G-H_{s}$ is either one of $Q_{1}, \dots, Q_{i-1}, Q_{i+1}, \dots, Q_{j-1}, Q_{j+1}, \dots, Q_{p}$, or $Q_{i} \cup \{x_{s}\}$ (which is connected as $x_{s}$ has a neighbour in $Q_{i}$), or strictly contained within $Q_{j}$. Since $H$ is a hitting set, to prove $H_{s}$ is a hitting set it suffices to show that $Q_{i} \cup \{x_{s}\}$ is sufficiently small, by Lemma~\ref{lemma:KNlegithit}. If $(Q_{i},Q_{j})$ is of the first type, then $|V(Q_{i}) \cup \{x_{s}\}| = |V(Q_{i})|+1 < \frac{n}{2}$. So $V(Q_{i}) \cup \{x_{s}\}$ is sufficiently small. If $(Q_{i},Q_{j})$ is of the second type, $|V(Q_{i}) \cup \{x_{s}\}| = \frac{n}{2}$, but it does not contain $v$. Thus $H_{s}$ is a hitting set.
However, $|H_{s}| = |H| - (|V(Q_{i})|-|V(Q_{i}) \cap X_{s}|) + (|V(Q_{j})|-|V(Q_{j}) \cap X_{s}|) \leq |H|$. If $|H_{s}|<|H|$, then condition \textbf{(0)} is contradicted. If $|H_{s}|=|H|$, since $|V(Q_{i}) \cup \{x_{s}\}| > |V(Q_{i})|$ and only components of higher index have become smaller, $H_{s}$ is a better choice of minimum hitting set by condition \textbf{(i)}, which is a contradiction.
\end{proof}

\begin{lemma}
\label{lemma:KNk3pluscomp}
Let $G,v,\mathcal{B}$ and $(H, (Q_1, \dots, Q_p))$ be as in Lemma~\ref{lemma:KNknogoodpair}. $G-H$ has at least three components.
\end{lemma}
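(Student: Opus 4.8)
The plan is to mimic the first paragraph of the proof of Lemma~\ref{lemma:KNthreecomp}, since that argument never used completeness of the host graph — it relied only on the characterisation of hitting sets in Lemma~\ref{lemma:KNlegithit}, which applies equally to the complete multipartite graph. First I would note that $n > k \geq 2$ forces $n \geq 3$, so Lemma~\ref{lemma:KNlegithit} applies: every component of $G-H$ has at most $\frac{n}{2}$ vertices, and $v$ does not lie in a component of order exactly $\frac{n}{2}$. Also $V(G-H) = V(G)$, so the orders of the components of $G-H$ sum to $n$.

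The first step is to rule out the possibility that $G-H$ has a single component. Such a component would have $n$ vertices, but $n \geq 3$ gives $n > \frac{n}{2}$, contradicting Lemma~\ref{lemma:KNlegithit}. The second step is to rule out exactly two components $Q_1, Q_2$. Each has at most $\frac{n}{2}$ vertices and their orders sum to $n$, so both have order exactly $\frac{n}{2}$; in particular $n$ is even. But $v$ lies in one of $Q_1, Q_2$, and that component has order exactly $\frac{n}{2}$, again contradicting Lemma~\ref{lemma:KNlegithit}. Hence $G-H$ has at least three components.

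There is no real obstacle here: the entire argument is a short pigeonhole applied on top of Lemma~\ref{lemma:KNlegithit}, and the only point requiring a little care is checking that the hypotheses $k \geq 2$ and $n > k$ indeed give $n \geq 3$, so that the characterisation of hitting sets is available and the inequality $n > \frac{n}{2}$ is strict. (The harder work — pinning down that there are \emph{exactly} three components, and determining their orders — is deferred to later lemmas, exactly as in the $K_n$ case, and will be where the good-pair machinery of Lemma~\ref{lemma:KNknogoodpair} does its real work.)
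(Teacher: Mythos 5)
Your proposal is correct and matches the paper's own proof of Lemma~\ref{lemma:KNk3pluscomp}: both rely solely on the hitting-set characterisation in Lemma~\ref{lemma:KNlegithit} and a short pigeonhole count ruling out one or two components, with the vertex $v$ eliminating the two-components case when $n$ is even. Your version merely streamlines the paper's parity case split without changing the argument.
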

\begin{proof}
By Lemma~\ref{lemma:KNlegithit}, we have an upper bound on the order of the components of $G-H$. If $G-H$ has only one component, clearly this component is too large. If $G-H$ has two components and $n$ is odd, then one of the components must have more than $\frac{n}{2}$ vertices. If $G-H$ has two components and $n$ is even, it is possible that both components have exactly $\frac{n}{2}$ vertices, however one of these components must contain $v$. Thus $G-H$ has at least three components.
\end{proof}

If $G$ is a star $K_{1,n-1}$, then $L(G) \cong K_{n-1}$ and $\tw(L(G)) = n-2$, which satisfies Theorem~\ref{theorem:KNk}. Now assume that $G$ is not a star.
If $(H,(Q_{1}, \dots, Q_{p}))$ is a good labelling where $p \geq 4$ and $Q_{2}, \dots, Q_{p}$ are all singleton sets and contained within one colour class, then say that $(H,(Q_{1}, \dots, Q_{p}))$ is a \emph{rare configuration}.

\begin{lemma}
\label{lemma:KNknorareconfig}
Let $G$ be a complete multipartite graph $G:=K_{n_{1}, \dots, n_{k}}$ such that $k \geq 2, n>k$ and $G$ is not a star, $v$ a vertex of $G$ chosen from a largest colour class, $\mathcal{B}$ a canonical line-bramble for $v$, and $(H, (Q_1, \dots, Q_p))$ a good labelling. Then $(H,(Q_{1}, \dots, Q_{p}))$ is not a rare configuration.
\end{lemma}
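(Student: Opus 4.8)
Suppose toward a contradiction that the good labelling $(H,(Q_1,\dots,Q_p))$ is a rare configuration, so $p\ge 4$ and $Q_2=\{w_2\},\dots,Q_p=\{w_p\}$ are singletons all lying in one colour class $X_t$. By Lemma~\ref{lemma:minHrule}, $H$ is exactly the set of edges of $G$ joining distinct components of $G-H$. Every vertex outside $X_t$ then lies in $Q_1$, since $Q_2,\dots,Q_p$ meet only $X_t$; writing $W:=V(G)\setminus X_t$ and $m:=|W|$, the only edges of $H$ join the $w_i$ to $W\subseteq V(Q_1)$, and each $w_i$ (being in $X_t$) is joined to every vertex of $W$, so $|H|=(p-1)m$. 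I record three consequences. First, $m\ge 2$: if $m\le 1$ then $X_t$ omits at most one vertex of $G$, which forces $G$ to be a star or to have $k=1$, contrary to hypothesis. Second, $|X_t|=n-m\ge p-1\ge 3$, and since each component of $G-H$ has at most $\tfrac n2$ vertices, $|V(Q_1)|\le\lfloor n/2\rfloor$, whence $p-1=n-|V(Q_1)|\ge\lceil n/2\rceil$. Third, one checks $v\in X_t$ (otherwise the largest colour class lies inside $W\subseteq V(Q_1)$, which forces $m=n/2$ and $Q_1=W=$ a colour class, contradicting that $Q_1$ is connected).

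The plan is to replace $H$ by a hitting set $H^\ast$ arising from a balanced partition, and to show $H^\ast$ is either smaller than $H$ or structurally preferable, contradicting the choice of good labelling. Partition $V(G)$ as $A\cup B\cup\{v\}$ where $A=X_t^A\cup W^A$, $B=X_t^B\cup W^B$, the four sets $X_t^A,X_t^B,W^A,W^B$ are nonempty, $|A|,|B|\le\lfloor n/2\rfloor$, and the splits of $X_t\setminus\{v\}$ and of each colour class inside $W$ are as even as possible; such a partition exists because $|X_t|\ge 3$, $m\ge 2$ and $2\lfloor n/2\rfloor\ge n-1$. Since every vertex of $X_t^A$ is adjacent to every vertex of $W^A\neq\emptyset$, the part $A$ is connected, and likewise $B$; so if $H^\ast$ is the set of edges between the three parts, then $G-H^\ast$ has components $A$, $B$, $\{v\}$, each of size at most $\tfrac n2$ and with $v$ not in a size-$\tfrac n2$ component, so $H^\ast$ is a hitting set by Lemma~\ref{lemma:KNlegithit}. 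Using that $X_t$ is independent and $v\in X_t$,
\[
|H^\ast| \;=\; |X_t^A|\,|W^B| + |W^A|\,|X_t^B| + e_{G[W]}(W^A,W^B) + |W^A| + |W^B|.
\]

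Now compare this with $|H|=(p-1)m=(n-|V(Q_1)|)m$. Choosing $|A|,|B|$ near $\tfrac n2$ makes $|X_t^A|\,|W^B|+|W^A|\,|X_t^B|$ about $\tfrac12|X_t|\,m=\tfrac12(n-m)m$, while the even split keeps $e_{G[W]}(W^A,W^B)$ small; plugging in $|V(Q_1)|\le\lfloor n/2\rfloor$ yields $|H^\ast|<|H|$ except when $G$ belongs to one of a few ``near-star'' families, namely those with $m\le 4$, i.e.\ with at most four vertices outside $X_t$ ($K_{2,n-2}$, $K_{1,1,n-2}$, $K_{2,1,n-3}$, $K_{1,1,1,n-3}$, $K_{2,2,n-4}$, and so on). For each of these I would evaluate $|H^\ast|$ exactly from the partition above; in almost all of them it is still strictly smaller than $|H|$, contradicting condition \textbf{(0)}. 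In the genuinely tight cases — most notably $K_{1,1,n-2}$ with $n$ even, where the single edge of $G[W]$ must be cut and $|H^\ast|=|H|$ — the new labelling instead has only three components, with largest part of size $\tfrac n2=\lfloor n/2\rfloor=|V(Q_1)|$ and second-largest of size $\tfrac n2-1\ge 2>1=|V(Q_2)|$, so it beats $(H,(Q_1,\dots,Q_p))$ under condition \textbf{(2)}. Either way the good labelling fails to be optimal, the desired contradiction.

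The part I expect to be delicate is the exact edge-count for small $m$: there the margin between $|H|$ and the optimum is only a constant, so one must fix the precise sizes of $A$ and $B$, verify the connectivity of the parts and the size and $v$-placement constraints of Lemma~\ref{lemma:KNlegithit}, and fall back on the lexicographic conditions \textbf{(1)}, \textbf{(2)} of a good labelling whenever equality $|H^\ast|=|H|$ occurs. Reassuringly, these borderline cases are exactly those in which $G$ is ``almost'' a star, which is precisely what the hypotheses $k\ge 2$ and ``$G$ is not a star'' rule out.
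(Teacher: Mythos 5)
Your structural setup is correct and genuinely different from the paper's. You rightly deduce from Lemma~\ref{lemma:minHrule} that $H$ consists exactly of the cross-component edges, hence $|H|=(p-1)m$ with $m=|V(G)\setminus X_t|$, and your observations that $m\ge 2$, $v\in X_t$, and $p-1=n-|V(Q_1)|\ge\lceil n/2\rceil$ all check out. Where the paper performs a local surgery — it first pins down $|V(Q_1)|$ via the good-pair machinery, then moves one vertex $y\in (V(Q_1)-\{v\})-X_s$ out to absorb the singletons and one vertex $z$ into $Q_1$, which gives an exact, uniform formula for $|H'|-|H|$ — you rebuild the hitting set globally from a balanced partition $A\cup B\cup\{v\}$. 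That is a legitimate alternative strategy, and it has the virtue of not needing the preliminary bound on $|V(Q_1)|$ at all.

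The genuine gap is that the decisive inequality $|H^\ast|<|H|$ is never actually established. You estimate that the cross terms are ``about'' $\tfrac12(n-m)m$ and that the even split keeps $e_{G[W]}(W^A,W^B)$ ``small'', and you then defer the exceptions to ``a few near-star families'' with $m\le 4$ which you promise to ``evaluate exactly''. But those exceptions are not finitely many graphs: each of $K_{2,n-2}$, $K_{1,1,n-2}$, $K_{2,1,n-3}$, $K_{1,1,1,n-3}$, \dots\ is an infinite family, each requires its own exact count under the constraints $|A|,|B|\le\lfloor n/2\rfloor$ with all four pieces nonempty, and each case of equality $|H^\ast|=|H|$ requires its own appeal to conditions \textbf{(1)}--\textbf{(p+1)}; you carry this out for exactly one of them ($K_{1,1,n-2}$ with $n$ even). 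Two further points compound the gap. First, the closing reassurance that the borderline cases ``are precisely what the hypotheses rule out'' is false: the hypotheses exclude stars, not near-stars, so $K_{2,n-2}$ and $K_{1,1,n-2}$ satisfy every hypothesis and must genuinely be handled. Second, you never verify that a split achieving your claimed value of $|X_t^A||W^B|+|W^A||X_t^B|$ is feasible: the unconstrained minimiser of that expression wants to place most of $X_t$ and most of $W$ on the \emph{same} side, which the constraint $|A|,|B|\le\lfloor n/2\rfloor$ forbids, so what you need is an explicit feasible split together with an upper bound on its cost, not a heuristic about the minimum. The strategy is very likely salvageable (a careful version of your own estimate gives $|H|-|H^\ast|\ge\frac{m(m-2)}{4}-O(1)$, isolating only $m=2$ and perhaps $m=3$ as tight), but as written the proof is not complete.
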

\begin{proof}
Assume $G$ is a rare configuration, but $G$ is not a star. Let $X_{s}$ be the colour class of $Q_{2}, \dots, Q_{p}$. Since $p \geq 4$, we may choose $j \in \{2, \dots, p\}$ such that $V(Q_{j}) \neq \{v\}$.

Suppose that one of the following conditions hold:
\begin{itemize*}
\item $|V(Q_{1})| < \frac{n}{2}-1$, 
\item $n$ is even, $|V(Q_{1})| = \frac{n}{2}-1$ and $v \notin V(Q_{1})$.
\end{itemize*}
$Q_{1}$ must contain at least two vertices not in $X_{s}$ since $G$ is not a star or an independent set (as $k \geq 2$).
So for each $x \in V(Q_{2}) \cup \dots \cup V(Q_{p})$, there is some $y \in V(Q_{1})$ such that $y \notin X_{s}$, so the edge $xy$ exists. Then $(Q_{1},Q_{j})$ is a good pair, which contradicts Lemma~\ref{lemma:KNknogoodpair}. Thus by Lemma~\ref{lemma:KNlegithit}, 
$$|Q_{1}| = 
\begin{cases}
\frac{n-1}{2} & \text{, if $n$ is odd} \\
\frac{n}{2}-1 & \text{, if $n$ is even and $v \in V(Q_{1})$} \\
\frac{n}{2} & \text{, if $n$ is even and $v \notin V(Q_{1})$}
\end{cases}
$$
Since at least two vertices of $Q_{1}$ are not in $X_{s}$, we may choose $y \in (V(Q_{1})-\{v\})-X_{s}$. Say $y \in X_{t}$. We can assume that $v \in V(Q_{1})$ or $v \in V(Q_{p})$, since if $v \in V(Q_{2}) \cup \dots V(Q_{p-1})$, then we can relabel the components $Q_{2}, \dots, Q_{p}$ to obtain a choice of $(H,(Q_{1}, \dots, Q_{p})$ which is better with regards to the condition \textbf{(p+1)}. Thus let $z \in V(Q_{2})$, and so $z \neq v$ since $p \geq 4$.
Let $H'$ be the set of edges created by taking $H$ and removing the edges from $y$ to $Q_{3} \cup \dots \cup Q_{p-1}$, adding the edges from $y$ to $Q_{1}-X_t$, and removing the edges from $z$ to $Q_{1}-\{y\}$. Then the components of $G-H'$ are $Q_{1} \cup \{z\} -\{y\}$, $\{y\} \cup Q_{3} \cup \dots \cup Q_{p-1}$, $Q_{p}$. The component $Q_{1} \cup \{z\} - \{y\}$ is connected since $Q_{1} - \{y\}$ contains a vertex not in $X_{s}$ and $z \in X_{s}$. Similarly, $\{y\} \cup Q_{3} \cup \dots \cup Q_{p-1}$ is connected since $y \in X_{t}$ and all vertices of $Q_{3} \cup \dots \cup Q_{p-1}$ are in $X_{s}$. %In order to obtain a contradiction, it suffices to show that $H'$ is a hitting set using Lemma~\ref{lemma:KNlegithit}, and that $H'$ is a better choice of hitting set than $H$. 

By Lemma~\ref{lemma:KNlegithit}, to show $H'$ is a hitting set, it is sufficient to show that no component of $G-H'$ is too large. Since $|V(Q_{1} \cup \{z\} - \{y\})|=|V(Q_{1})|$ and $v \neq z$ and $H$ is a hitting set, $Q_{1} \cup \{z\} -\{y\}$ is sufficiently small. Similarly $Q_{p}$ is sufficiently small. However, $|V(\{y\} \cup Q_{3} \cup \dots \cup Q_{p-1})|=p-2$. As $|V(Q_{1})|+ \dots + |V(Q_{p})|=n$, it follows that $p-2 = n - |V(Q_{1})|-1$. In order to show this is sufficiently small, we need to consider the parity of $n$, which we consider below. Also note, $$|H'| = |H| - (p-3) + (|V(Q_{1})|-|V(Q_{1}) \cap X_{t}|) - (|V(Q_{1})|-1 -|V(Q_{1}) \cap X_{s}|).$$ Since $|V(Q_{1}) \cap X_{t}| \geq 1$ and $|V(Q_{1}) \cap X_{s}| \leq |V(Q_{1})|-2$, we have $|H'| \leq |H| - (p-1)+|V(Q_{1})|=|H|+2|V(Q_{1})|-n$. This also depends on the parity of $n$. Now we consider these separate cases to check the order of $\{y\} \cup Q_{3} \cup \dots \cup Q_{p-1}$ and $|H'|$.

Firstly, say $n$ is odd. In this case $|V(Q_{1})|=\frac{n-1}{2}$, so then $|V(\{y\} \cup Q_{3} \cup \dots \cup Q_{p-1})| = p-2 = n - \frac{n-1}{2}-1 = \frac{n-1}{2}$, and so $\{y\} \cup Q_{3} \cup \dots \cup Q_{p-1}$ is sufficiently small, and $H'$ is a hitting set. Also, $|H'| \leq |H|+2(\frac{n-1}{2})-n < |H|$, which contradicts condition \textbf{(0)}.
Secondly, say $n$ is even and $v \in V(Q_{1})$. Then $|V(Q_{1})|=\frac{n}{2}-1$, implying $p-2 = \frac{n}{2}$, and $|H'| \leq |H|-2$. This contradicts condition \textbf{(0)}.
Finally, say $n$ is even and $v \notin V(Q_{1})$. Then $|V(Q_{1})|=\frac{n}{2}$ and $v \in V(Q_{p})$. Then $p-2 = \frac{n}{2}-1$, and $|H'| \leq |H|+2(\frac{n}{2})-n = |H|$. However, note that the order of the second largest component of $G-H'$ is $p-2=\frac{n}{2} -1$, whereas for $G-H$ the order of the second largest component is $1$. As $G$ is a rare configuration but not a star, $n \geq 5$, since $|V(Q_{1})| \geq 2$ and $p \geq 4$, implying $\frac{n}{2} -1 > 1$. Thus $H'$ is a better choice of minimum hitting set, by condition \textbf{(2)}.

Thus, in either case, if $G$ is not a star, but is a rare configuration, then there is a contradiction to one of our conditions on $(H,(Q_{1}, \dots, Q_{p}))$.
\end{proof}

\begin{lemma}
\label{lemma:KNkthreecomp}
Let $G,v,\mathcal{B}$ and $(H,(Q_1, \dots, Q_p))$ be as in Lemma~\ref{lemma:KNknorareconfig}. Then $G-H$ has exactly three components.
\end{lemma}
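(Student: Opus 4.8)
The plan is to combine Lemma~\ref{lemma:KNk3pluscomp}, which already gives at least three components, with a proof that $p\le 3$ by contradiction: assuming $p\ge 4$, I will produce either a good pair (contradicting Lemma~\ref{lemma:KNknogoodpair}) or a rare configuration (contradicting Lemma~\ref{lemma:KNknorareconfig}). The first step is a structural observation that tames the adjacency clause in the definition of ``good pair'': for $i<j$ in a good labelling, the condition ``every $x\in V(Q_j)$ has a neighbour in $Q_i$'' can fail only if $Q_i$ lies inside a single colour class $X_s$ with $V(Q_j)\cap X_s\ne\emptyset$; since a connected subgraph contained in one colour class is a single vertex, this forces $|V(Q_i)|=|V(Q_j)|=1$ with both vertices in $X_s$. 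In particular, as soon as $|V(Q_i)|\ge 2$ the component $Q_i$ meets at least two colour classes (here we use that $G$ is not a star and $k\ge 2$), so the adjacency clause holds automatically against every later component.

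Next I would split on the parity of $n$ (noting that $G$ non-star with $n>k\ge 2$ forces $n\ge 4$, and $n\ge 5$ when $n$ is odd), calling a component \emph{large} if it has at least $\frac{n-1}{2}$ vertices ($n$ odd) or at least $\frac n2-1$ vertices ($n$ even), and considering whether there are at least three, exactly two, or at most one large component. If there are at least three large components, a vertex count against $n$ using Lemma~\ref{lemma:KNlegithit} (and that a fourth component contributes at least one more vertex) is contradictory for all $n\ge 5$; the only exception is $n=4$, where $p=4$ and every component is a singleton, and there I would directly exhibit a good pair of the second type from two differently-coloured vertices distinct from $v$, which exist in $K_{2,2}$ and $K_{1,1,2}$. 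If there is at most one large component, then $Q_2,\dots,Q_p$ are all small; by the structural observation $|V(Q_2)|\ge 2$ would make $(Q_2,Q_p)$ a good pair of the first type, so all of $Q_2,\dots,Q_p$ are singletons, and if two of them lay in different colour classes they would form a good pair of the first type, so $Q_2,\dots,Q_p$ all lie in one colour class — a rare configuration.

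The remaining case, $n$ even with exactly two large components, is the one I expect to be the main obstacle, since it is where the bookkeeping around $v$, the size-$(\frac n2-1)$ components, and the tie-breaking conditions of a good labelling all interact. A vertex count forces $p=4$ with $|V(Q_1)|=|V(Q_2)|=\frac n2-1$ and $|V(Q_3)|=|V(Q_4)|=1$; because $|V(Q_1)|=|V(Q_2)|=\frac n2-1$, only a good pair of the second type is available, which refers to $v$. Using condition \textbf{(p+1)} of the good labelling, $v$ must lie in the higher-indexed component of its size class, hence in $Q_2$ or in $Q_4$; in either case $v\notin V(Q_1)$ and $V(Q_3)\ne\{v\}$, and for $n\ge 6$ we have $|V(Q_1)|=\frac n2-1\ge 2$ so the adjacency clause holds, making $(Q_1,Q_3)$ a good pair of the second type — a contradiction (with the subcase $n=4$ again reduced to the all-singletons argument above). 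Having ruled out $p\ge 4$ in every case, and with $p\ge 3$ from Lemma~\ref{lemma:KNk3pluscomp}, the lemma follows.
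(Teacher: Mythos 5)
Your proof is correct in substance and runs on the same machinery as the paper's: Lemma~\ref{lemma:KNk3pluscomp} for $p\geq 3$, then a contradiction for $p\geq 4$ obtained from Lemma~\ref{lemma:KNknogoodpair} and Lemma~\ref{lemma:KNknorareconfig} together with vertex counting against Lemma~\ref{lemma:KNlegithit}. The organisation differs, though. The paper argues that, since the labelling is not a rare configuration, suitable pairs $(Q_1,Q_j)$ and $(Q_2,Q_j)$ satisfying the adjacency clause exist, deduces that both $Q_1$ and $Q_2$ are forced to (near-)maximum size, and then counts, splitting on where $v$ lies when $n$ is even. You instead classify by the number of ``large'' components; your preliminary observation that the adjacency clause can fail only for two singleton components in a common colour class, and your use of condition \textbf{(p+1)} to force $v\in V(Q_2)\cup V(Q_4)$ and then exhibit $(Q_1,Q_3)$ as an explicit good pair of the second type, make the even case more explicit than the paper's rather terse treatment of it; your handling of $n=4$ via two differently coloured singletons avoiding $v$ matches the spirit of the paper's argument in Lemma~\ref{lemma:KNthreecomp}. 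One omission to repair: your trichotomy treats ``at least three large'', ``at most one large'', and ``$n$ even with exactly two large'', but never the case ``$n$ odd with exactly two large components''. It is immediate with the count you already use — for $n$ odd, Lemma~\ref{lemma:KNlegithit} caps both large components at $\frac{n-1}{2}$, so with $p\geq 4$ the components would contain at least $2\cdot\frac{n-1}{2}+2=n+1$ vertices, a contradiction — but the case should be stated.
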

\begin{proof}
$G-H$ has at least three components, by Lemma~\ref{lemma:KNk3pluscomp}.
Assume for the sake of a contradiction that $G-H$ has greater than three components. Since $p \geq 4$, if all components but $Q_{1}$ are singleton sets in the one colour class, then we have a rare configuration. By Lemma~\ref{lemma:KNknorareconfig}, this cannot occur. Thus either $Q_{2}$ is not a singleton set, or $Q_{2}, \dots, Q_{p}$ are not all in one colour class. Consider a pair $(Q_{i},Q_{j})$, where $i \in \{1,2\}$, $i<j$ and if $|V(Q_{i})| = 1$ then $Q_{j}$ and $Q_{i}$ are not in the same colour class.
We can find such a pair for $i=1$ and for $i=2$ since this is not a rare configuration. In either case, for all $x \in V(Q_{j})$ there exists a $y \in V(Q_{i})$ such that $xy$ is an edge, since there is always some $y \in V(Q_{i})$ of a different colour class to $x$. Since $(Q_{i},Q_{j})$ is not a good pair by Lemma~\ref{lemma:KNknogoodpair}, we know $|V(Q_{i})|$ is too large. In particular, if $n$ is odd, $|V(Q_{1})|=|V(Q_{2})|=\frac{n-1}{2}$. However, since each component must contain a vertex and $p \geq 4$, the sum of the orders of the components is at least $2(\frac{n-1}{2})+2>n$, which is a contradiction. If $n$ is even and $v$ is in neither $Q_{1}$ nor $Q_{2}$, then $|V(Q_{1})|=|V(Q_{2})|=\frac{n}{2}$, which again means the sum of the orders of the components is too large. Finally, if $n$ is even and without loss of generality $v \in V(Q_{2})$, then $|V(Q_{1})|=\frac{n}{2}$ and $|V(Q_{2})|=\frac{n}{2}-1$, which still gives a contradiction on the orders of the components. Hence $G-H$ has exactly three components.
\end{proof}

\begin{lemma}
\label{lemma:KNkcompbd}
Let $G,v,\mathcal{B}$ and $(H,(Q_1, \dots, Q_p))$ be as in Lemma~\ref{lemma:KNknorareconfig}.
If $n$ is odd, then $|V(Q_{1})|= |V(Q_{2})|=\frac{n-1}{2}$ and $|V(Q_{3})|=1$. 
If $n$ is even, then $|V(Q_{1})|= \frac{n}{2}, |V(Q_{2})|=\frac{n}{2}-1$ and $|V(Q_{3})|=1$.
\end{lemma}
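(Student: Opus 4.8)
The plan is to follow the proof of Lemma~\ref{lemma:KNcompbd} for $K_n$, now using Lemma~\ref{lemma:KNkthreecomp} (so $G-H$ has exactly three components, which I label $Q_1,Q_2,Q_3$ in non-increasing order of size, with $|V(Q_3)|\ge1$), Lemma~\ref{lemma:KNknogoodpair} (no good pair), and Lemma~\ref{lemma:KNlegithit} (every component of $G-H$ has at most $\frac n2$ vertices, and $v$'s component has at most $\frac n2-1$ when $n$ is even). The only feature absent from the $K_n$ argument is the adjacency clause in the definition of a good pair --- in $K_n$ any two disjoint vertex sets are joined by an edge, so $(Q_2,Q_3)$ is automatically eligible --- so the new ingredient, and where I expect the real work to lie, is bringing that clause under control; the rest is the bookkeeping of Lemma~\ref{lemma:KNcompbd} carried through with the extra ``$|V(Q_i)|=\frac n2-1$'' conditions.

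First I would observe that either every vertex of $Q_3$ has a neighbour in $Q_2$, or there is a vertex $x\in Q_3$ whose colour class $X_s$ contains all of $Q_2$. In the latter case $Q_2\subseteq X_s$ is connected and $X_s$ spans no edge, so $|V(Q_2)|=1$, hence $|V(Q_3)|=1$ and $|V(Q_1)|=n-2$; Lemma~\ref{lemma:KNlegithit} then forces $n-2\le\frac n2$, i.e.\ $n\le4$. Since $G$ is not a star and $n>k\ge2$, this leaves only $G\in\{K_{2,2},K_{1,1,2}\}$, both with $n=4$ even, and then $(|V(Q_1)|,|V(Q_2)|,|V(Q_3)|)=(2,1,1)=(\frac n2,\frac n2-1,1)$ already matches the claim (and no odd case survives). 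So from now on I may assume every vertex of $Q_3$ has a neighbour in $Q_2$.

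With the adjacency clause in hand, $(Q_2,Q_3)$ fails to be a good pair only through its size conditions; ruling out type~1 gives $|V(Q_2)|\ge\frac n2-1$, hence $|V(Q_1)|\ge|V(Q_2)|\ge\frac n2-1$. Combined with the upper bound $\frac n2$ from Lemma~\ref{lemma:KNlegithit}, the odd case is immediate by integrality: $|V(Q_1)|=|V(Q_2)|=\frac{n-1}2$ and $|V(Q_3)|=1$. When $n$ is even, $|V(Q_1)|,|V(Q_2)|\in\{\frac n2-1,\frac n2\}$ and $|V(Q_3)|\in\{1,2\}$; since $|V(Q_3)|=1$ forces $(\frac n2,\frac n2-1,1)$ as claimed, it only remains to rule out $|V(Q_1)|=|V(Q_2)|=\frac n2-1$, $|V(Q_3)|=2$, which additionally requires $n\ge6$ (as $(1,1,2)$ is not non-increasing).

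To kill that last case I would exhibit a good pair of the second type, contradicting Lemma~\ref{lemma:KNknogoodpair}. Here $|V(Q_1)|=|V(Q_2)|=\frac n2-1\ge2$, so each of $Q_1,Q_2$ is connected, contains an edge, and hence meets at least two colour classes; this supplies the adjacency clause for any ordered pair whose smaller index is $1$ or $2$. As $v$ lies in exactly one of $Q_1,Q_2,Q_3$, I can always pick such a pair $(Q_i,Q_j)$ with $i\in\{1,2\}$, $|V(Q_i)|=\frac n2-1$, $v\notin V(Q_i)$ and $v\notin V(Q_j)$ (so that $V(Q_j)\cap X_s\neq\{v\}$ for every colour class $X_s$): take $(Q_2,Q_3)$ if $v\in Q_1$, take $(Q_1,Q_3)$ if $v\in Q_2$, and take $(Q_1,Q_2)$ if $v\in Q_3$. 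Each of these is a good pair of the second type, the desired contradiction, and this completes the proof.
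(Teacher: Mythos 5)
Your proposal is correct and follows essentially the same route as the paper: it combines Lemma~\ref{lemma:KNkthreecomp}, Lemma~\ref{lemma:KNlegithit} and the absence of good pairs (Lemma~\ref{lemma:KNknogoodpair}), using type-1 pairs to force $|V(Q_1)|,|V(Q_2)|\geq \frac{n}{2}-1$ and a type-2 pair (chosen according to which component contains $v$) to eliminate the remaining even-$n$ case $(\frac n2-1,\frac n2-1,2)$. The only organizational difference is that you dispose of the adjacency clause up front (the degenerate case $V(Q_2)\subseteq X_s$, forcing $n\le 4$), where the paper instead secures adjacency by first arguing $|V(Q_i)|\ge 2$; both handle the small cases correctly.
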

\begin{proof}
Lemma~\ref{lemma:KNkthreecomp} shows that $G-H$ has three components. Recall that in a good labelling that $|V(Q_{1})| \geq |V(Q_{2})| \geq |V(Q_{3})|$. 
If $|V(Q_{1})|=1$, then $n=3$, and since $\frac{n-1}{2}=1$, then our statement holds in this case. Thus we can assume $n \geq 4$ and $|V(Q_{1})| \geq 2$. Hence $(Q_{1},Q_{j})$ is a good pair for $j>1$ unless $Q_{1}$ is too large. If $n$ is odd, then $|V(Q_{1})| = \frac{n-1}{2}$. If $|V(Q_{2})|=1$, $\frac{n-1}{2}+1+1=n$, implying $n=3$. So $|V(Q_{2})| \geq 2$, and $(Q_{2},Q_{3})$ is a good pair unless $|V(Q_{2})|=\frac{n-1}{2}$, in which case $|V(Q_{3})|=1$.

If $n$ is even and $v \in V(Q_{1})$, then $|V(Q_{1})|=\frac{n}{2}-1$. Again, if $|V(Q_{2})|=1$ then $\frac{n}{2}-1+1+1=n$, implying $n=2$. So $|V(Q_{2})| \geq 2$, and $(Q_{2},Q_{3})$ is a good pair unless $|V(Q_{2})|=\frac{n}{2}$, implying $|V(Q_{3})|=1$. (Note here we'd need to relabel the components so they are in descending order of size.)
Finally, if $n$ is even and $v \notin V(Q_{1})$, then $|V(Q_{1})|=\frac{n}{2}$. If $|V(Q_{2})|=1$, then $\frac{n}{2}+1+1=n$, implying $n=4$. However, then $|V(Q_{3})|=1$ and our statement holds. If $n \geq 5$, then $|V(Q_{2})| \geq \frac{n}{2}-1$ else $(Q_{2},Q_{3})$ is a good pair. Since we must have three components, $|V(Q_{2})|=\frac{n}{2}-1$ and $|V(Q_{3})|=1$.
Either way, our components have the desired size.
\end{proof}

\begin{lemma}
\label{lemma:KNkvcc}
Let $G,v,\mathcal{B}$ and $(H,(Q_1, \dots, Q_p))$ be as in Lemma~\ref{lemma:KNknorareconfig}. If $v \notin Q_{3}$, then the vertex in $Q_{3}$ is in a different colour class to $v$.
\end{lemma}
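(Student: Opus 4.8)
The plan is to argue by contradiction, exploiting the symmetry between $v$ and any vertex sharing its colour class. Suppose $v \notin V(Q_{3})$ but the unique vertex $w$ of $Q_{3}$ (unique by Lemma~\ref{lemma:KNkcompbd}) lies in the same colour class $X_{s}$ as $v$. In a complete multipartite graph any two vertices of a common colour class are non-adjacent and have identical neighbourhoods, so the transposition $\sigma$ of $V(G)$ that swaps $v$ with $w$ and fixes every other vertex is an automorphism of $G$. I would set $H' := \sigma(H) = \{\sigma(e) : e \in H\}$ and show that $H'$, equipped with a suitable labelling, contradicts the optimality of the good labelling $(H,(Q_{1},\dots,Q_{p}))$.

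First I would verify that $H'$ is a hitting set of $\mathcal{B}$ via Lemma~\ref{lemma:KNlegithit}. Because $\sigma$ is an automorphism, the components of $G - H'$ are exactly $\sigma(Q_{1}),\dots,\sigma(Q_{p})$, which have the same orders as $Q_{1},\dots,Q_{p}$; by Lemmas~\ref{lemma:KNkthreecomp} and~\ref{lemma:KNkcompbd} there are exactly three of them, each of order at most $\frac{n}{2}$. Moreover $v = \sigma(w)$ lies in $\sigma(Q_{3})$, which has order $1$, and $1 < \frac{n}{2}$ since $n > k \geq 2$ gives $n \geq 3$. Hence $H'$ is a hitting set of $\mathcal{B}$.

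Next I would compare $H'$ (with its labelling) against $(H,(Q_{1},\dots,Q_{p}))$ on conditions \textbf{(0)}--\textbf{(p+1)}. As $\sigma$ is a bijection, $|H'| = |H|$, and the multiset of component orders of $G - H'$ equals that of $G - H$; so after labelling the components of $G - H'$ descendingly --- breaking ties, as we are free to do, so that the order-$1$ component $\sigma(Q_{3}) = \{v\}$ receives the largest index --- the resulting pair ties $(H,(Q_{1},\dots,Q_{p}))$ on conditions \textbf{(0)} through \textbf{(p)}. But in this labelling $v$ lies in the last component, whereas by assumption $v \notin V(Q_{3})$, so in the good labelling $v$ lies in $Q_{1}$ or $Q_{2}$, a component of strictly smaller index. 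This contradicts condition \textbf{(p+1)}, and the lemma follows.

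I expect the only point needing care to be the legitimacy of placing $\{v\}$ last when labelling the components of $G - H'$ descendingly: when $n \geq 5$ the component $\sigma(Q_{3})$ is the unique smallest one (the other two have order at least $2$ by Lemma~\ref{lemma:KNkcompbd}), so it is forced into the last slot, while for the only remaining case $n = 4$ the two smallest components both have order $1$ and condition \textbf{(p+1)} itself licenses choosing $\{v\}$ to be last. Everything else is routine bookkeeping with the component orders supplied by Lemma~\ref{lemma:KNkcompbd}.
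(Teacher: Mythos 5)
Your proposal is correct and is essentially the paper's own argument: the paper also produces a new hitting set of the same size by exchanging the roles of $v$ and the vertex $x$ of $Q_3$ (adding the edges from $v$ to its component and deleting those from $x$ to it, which is exactly $\sigma(H)$ in your notation), keeps the same component orders, and contradicts condition \textbf{(p+1)}. Phrasing the swap as an automorphism is a slightly cleaner packaging, since it makes $|H'|=|H|$, the connectivity of the modified component, and the parity/WLOG case analysis of where $v$ sits all automatic.
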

\begin{proof}
By Lemma~\ref{lemma:KNkcompbd}, $|V(Q_{3})|=1$. Let $x$ be the vertex in $Q_{3}$. Assume for the sake of contradiction that $x,v$ are in colour class $X_{s}$. If $n$ is odd then $v \in V(Q_{1})$ or $V(Q_{2})$, but these components have the same order, by Lemma~\ref{lemma:KNkcompbd}. If $n$ is even, $v \in V(Q_{2})$, since otherwise $v$ is in a component of order $\frac{n}{2}$, again by Lemma~\ref{lemma:KNkcompbd}. So without loss of generality, $v \in V(Q_{2})$. Define the hitting set $H'$ as follows: from $H$, add the edges from $v$ to $Q_{2}$, and then remove the edges from $x$ to $Q_{2}$. Since $xv \notin E(G)$, the components of $G-H'$ are $Q_{1}, (Q_{2}-\{v\}) \cup \{x\}$ and $\{v\}$ (as $x,v$ are in the same colour class, $(Q_{2}-\{v\}) \cup \{x\}$ is connected). The orders of the components have not changed, and $v$ has not been placed into a component of order $\frac{n}{2}$, so this is a hitting set by Lemma~\ref{lemma:KNlegithit}. $|H'| = |H| + (|V(Q_{2})|-|V(Q_{2}) \cap X_{s}|) - (|V(Q_{2})|-(|V(Q_{2}) \cap X_{s}|))=|H|$. Since $v$ is now in a component of higher index, this contradicts condition \textbf{(p+1)}.
\end{proof}

The previous lemmas give a good idea of the structure of the components of $G-H$. When dealing with the complete graph, this was sufficient. However, in the case of the complete multipartite graph, we also need to know how the components of $G-H$ interact with the colour classes of $G$. As we might expect, in the optimal case, each colour class is essentially split evenly across the two large components $Q_1$ and $Q_2$. In order to show this, however, we need to be careful about the parity of $n$ and the parities of $n_1, \dots, n_k$. Recall that we label the colour classes $X_1, \dots, X_n$. For the following section, we assume that $G$ is a complete multipartite graph such that $k \geq 2$ and $G$ is not a star, and as such we have only three components by Lemma~\ref{lemma:KNkthreecomp}.

%Now consider the structure of the colour classes $X_{1}, \dots, X_{k}$ inside our three components. 
%Let $G$ be a complete multipartite graph $G:=K_{n_{1}, \dots, n_{k}}$ such that $k \geq 2$ and $G$ is not a star, $v$ a vertex of $G$ chosen from a largest colour class, $\mathcal{B}$ a canonical line-bramble for $v$, and $(H, (Q_1, \dots, Q_p))$ a good labelling.
\begin{definition}
Let $X^{*}_{i} := X_{i} \cap (V(Q_{1}) \cup V(Q_{2}))$, and say $X^{*}_{i}$ is \emph{even} or \emph{odd} depending on the parity of its order.
\end{definition}

\begin{definition}
\begin{itemize*}
\item A colour class $X_{i}$ is called \emph{balanced} if $|V(Q_{1}) \cap X_{i}| = |V(Q_{2}) \cap X_{i}|$.
\item A colour class $X_{i}$ is \emph{$Q_{1}$-skew} if $|V(Q_{1}) \cap X_{i}| \geq |V(Q_{2}) \cap X_{i}|+1$. When $|V(Q_{1}) \cap X_{i}| = |V(Q_{2}) \cap X_{i}|+1$, we say $X_{i}$ is \emph{just-$Q_{1}$-skew}.
\item A colour class $X_{i}$ is \emph{$Q_{2}$-skew} if $|V(Q_{1}) \cap X_{i}|+1 \leq |V(Q_{2}) \cap X_{i}|$. When $|V(Q_{1}) \cap X_{i}|+1 = |V(Q_{2}) \cap X_{i}|$, we say $X_{i}$ is \emph{just-$Q_{2}$-skew}.
\item $(X_{i},X{j})$ is called a \emph{skew pair} if $X_{i}$ is $Q_{1}$-skew and $X_{j}$ is $Q_{2}$-skew.
\end{itemize*}
For simplicity, if $X_{i}$ is $Q_{1}$-skew or $Q_{2}$-skew, then we say $X_{i}$ is \emph{skew}. Similarly if $X_{i}$ is just-$Q_{1}$-skew or just-$Q_{2}$-skew, then we say $X_{i}$ is \emph{just-skew}.
\end{definition}

We say $G$ is an \emph{exception} if $n$ is even, and there is a colour class $X_{s}$ such that $|V(Q_{1}) \cap X_{s}| = |V(Q_{1})|-1$ and $|V(Q_{2}) \cap X_{s}| = |V(Q_{2})|-1$.

\begin{lemma}
\label{lemma:KNkbalanceskew}
Let $G$ be a complete multipartite graph $G:=K_{n_{1}, \dots, n_{k}}$ such that $k \geq 2$, $n>4,k$ and $G$ is neither a star nor an exception, $v$ a vertex of $G$ chosen from a largest colour class, $\mathcal{B}$ a canonical line-bramble for $v$, and $(H, (Q_1,Q_2, Q_3))$ a good labelling. If $(X_{i},X{j})$ is a skew pair, then both $X_{i}$ and $X_{j}$ are just-skew.
\end{lemma}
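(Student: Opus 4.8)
The plan is to argue by contradiction: suppose $(X_i, X_j)$ is a skew pair but at least one of the two classes is more than just-skew, so that $|V(Q_1)\cap X_i| \geq |V(Q_2)\cap X_i| + 1$ and $|V(Q_1)\cap X_j| + 1 \leq |V(Q_2)\cap X_j|$, with at least one of these inequalities strict by $2$ rather than just by $1$. The idea is to rebalance: move one vertex of $X_i$ from $Q_1$ into $Q_2$, and simultaneously move one vertex of $X_j$ from $Q_2$ into $Q_1$. Since the two moved vertices lie in different colour classes, each can be attached to the component it enters (it has a neighbour there, because $Q_1$ and $Q_2$ each meet at least two colour classes — here I will want to use that $G$ is not a star and $n > 4$, so $Q_1$ and $Q_2$ are large enough and meet enough classes), and the orders $|V(Q_1)|, |V(Q_2)|, |V(Q_3)|$ are unchanged. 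By Lemma~\ref{lemma:KNlegithit} the resulting edge set $H'$ is still a hitting set provided $v$'s situation is not worsened, which I can arrange by choosing the moved vertices to avoid $v$ (possible since $v$ lies in a largest class and the relevant classes have at least two vertices on the appropriate side — this is where the "not an exception" and "not a star" hypotheses do work).

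The heart of the argument is the edge count. Write $a_1 = |V(Q_1)\cap X_i|$, $a_2 = |V(Q_2)\cap X_i|$, $b_1 = |V(Q_1)\cap X_j|$, $b_2 = |V(Q_2)\cap X_j|$. Deleting a vertex $x \in X_i \cap Q_1$ from $Q_1$ removes the $|V(Q_1)| - a_1$ edges from $x$ into $Q_1 - X_i$; re-inserting it into $Q_2$ adds $|V(Q_2)| - a_2$ edges (after the symmetric move of $y\in X_j\cap Q_2$ these counts shift by $1$, but the leading behaviour is the same, so I will do the bookkeeping carefully with the post-move sizes). The change in $|H|$ from the $X_i$-move is $(|V(Q_2)| - a_2) - (|V(Q_1)| - a_1) = (a_1 - a_2) - (|V(Q_1)| - |V(Q_2)|)$, and from the $X_j$-move it is $(b_2 - b_1) - (|V(Q_2)| - |V(Q_1)|)$ up to a $\pm 1$ correction for the interaction of the two moves. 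Adding these, the $|V(Q_1)| - |V(Q_2)|$ terms cancel, and the total change is $(a_1 - a_2) - (b_2 - b_1) + (\text{small correction})$. Since $X_i$ is $Q_1$-skew, $a_1 - a_2 \geq 1$; since $X_j$ is $Q_2$-skew, $b_2 - b_1 \geq 1$; and if either class fails to be just-skew then the corresponding quantity is $\geq 2$. I expect this to give $|H'| < |H|$ (or, in the boundary case, $|H'| = |H|$ but with a component reshuffled to contradict a later condition in the good-labelling hierarchy), contradicting condition \textbf{(0)}.

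The main obstacle I anticipate is the careful handling of the $\pm 1$ corrections and degenerate cases: when the interaction of the two vertex moves changes an edge count, when the moved vertex would be adjacent to the vertex in $Q_3$ (this does not affect $H$ but could affect connectivity claims), when $v$ lies in $X_i$ or $X_j$ so that the avoidance argument is tight, and the possibility that $a_1 - a_2$ and $b_2 - b_1$ are each exactly $1$ yet the correction term still forces strict inequality — precisely the case the lemma rules out. I would isolate the "exception" configuration explicitly (where a class occupies all but one vertex of each of $Q_1, Q_2$) as the one place this rebalancing genuinely cannot be carried out, which is why it appears as a hypothesis, and verify that in all other cases either $|H'| < |H|$ or the tie is broken against the current labelling by conditions \textbf{(1)}, \textbf{(2)} or \textbf{(p+1)}.
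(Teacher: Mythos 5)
Your strategy is the same as the paper's: swap a vertex $x \in (V(Q_{1})\cap X_{i})-\{v\}$ with a vertex $y \in (V(Q_{2})\cap X_{j})-\{v\}$, observe that the component orders and the location of $v$ are unchanged so the new edge set $H'$ is again a hitting set by Lemma~\ref{lemma:KNlegithit}, and compare $|H'|$ with $|H|$ using minimality. You also correctly located where the side hypotheses act: the connectivity of $(Q_{1}-\{x\})\cup\{y\}$ and $(Q_{2}-\{y\})\cup\{x\}$ is exactly where ``not a star'' (so $Q_{1},Q_{2}$ each meet at least two colour classes) and ``not an exception'' are used, and the availability of $x,y\neq v$ comes from $v$ lying in a largest class together with $n>k$ (the paper's precise argument: if $V(Q_{1})\cap X_{i}=\{v\}$ then by Lemma~\ref{lemma:KNkvcc} $X_{i}=\{v\}$, forcing every class to be a singleton, i.e.\ $n=k$).

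The genuine problem is your central edge count. Writing $a_{1}=|V(Q_{1})\cap X_{i}|$, $a_{2}=|V(Q_{2})\cap X_{i}|$, $b_{1}=|V(Q_{1})\cap X_{j}|$, $b_{2}=|V(Q_{2})\cap X_{j}|$, the swap gives
\[
|H'| = |H| - (|V(Q_{2})|-a_{2}) + (|V(Q_{1})|-a_{1}) - (|V(Q_{1})|-1-b_{1}) + (|V(Q_{2})|+1-b_{2}) = |H| + 2 - (a_{1}-a_{2}) - (b_{2}-b_{1}),
\]
so the two skew deficits enter with the \emph{same} sign, and the interaction of the two moves (including the edge $xy$) contributes the $+2$. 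Your stated total $(a_{1}-a_{2})-(b_{2}-b_{1})$ has the deficits cancelling against each other; followed literally it yields no contradiction when, say, $a_{1}-a_{2}\geq 2$ and $b_{2}-b_{1}=1$, which is precisely a configuration the lemma must exclude. With the correct identity, condition \textbf{(0)} alone ($|H'|\geq|H|$) gives $(a_{1}-a_{2})+(b_{2}-b_{1})\leq 2$, and since skewness makes each summand at least $1$, both equal $1$, i.e.\ both classes are just-skew; no appeal to the tie-breaking conditions \textbf{(1)}, \textbf{(2)} or \textbf{(p+1)} is needed and no boundary case remains. So the approach is right, but the bookkeeping must be redone as above for the argument to close.
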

\begin{proof}
Since no colour class can be both $Q_{1}$-skew and $Q_{2}$-skew, $i \neq j$. Since $n \geq 5$, by Lemma~\ref{lemma:KNkcompbd}, both $Q_{1}$ and $Q_{2}$ contain at least two vertices, and thus intersect at least two colour classes.

First, we show that both $X^{*}_{i}$ and $X^{*}_{j}$ contain a vertex other than $v$. If $X^{*}_{i} = \emptyset$, then $X_{i}$ is not skew. So now assume $X^{*}_{i} \neq \emptyset$. Similarly, $X^{*}_{j} \neq \emptyset$. If $X^{*}_{i} = \{v\}$, then by Lemma~\ref{lemma:KNkvcc}, $X_{i} \cap V(Q_{3}) = \emptyset$, and so $X_{i} = \{v\}$. But since $v$ is in a largest colour class, every colour class has order one, and as such $k=n$, which contradicts one of our assumptions on $n$. Thus both $X^{*}_{i}$ and $X^{*}_{j}$ contain a vertex other than $v$, and since $X_{i}$ is $Q_{1}$-skew and $X_{j}$ is $Q_{2}$-skew, there are vertices $x \in (V(Q_{1}) \cap X_{i}) - \{v\}$ and $y \in (V(Q_{2}) \cap X_{j}) - \{v\}$. Then define the hitting set $H'$ as follows: remove the edges from $x$ to $V(Q_{2})$ from $H$, add the edges from $x$ to $V(Q_{1})-X_i$, remove the edges from $y$ to $V(Q_{1})-\{x\}$, and add the edges from $y$ to $V(Q_{2}) \cup \{x\}$. Now $G-H'$ has components $(Q_{1} - \{x\}) \cup \{y\}, (Q_{2} - \{y\}) \cup \{x\}$ and $Q_{3}$, assuming that $(Q_{1} - \{x\}) \cup \{y\}$ and $(Q_{2} - \{y\}) \cup \{x\}$ are in fact connected (which we now prove). 

If $(Q_{1} - \{x\}) \cup \{y\}$ is not connected, then it intersects only one colour class, which must be $X_{j}$ as $y \in X_{j}$. Since $x \in X_{i}$, it follows that $|V(Q_{1}) \cap X_{j}| = |V(Q_{1})|-1$. Since $X_{j}$ is $Q_{2}$-skew, $$|V(Q_{1})| = |V(Q_{1}) \cap X_{j}|+1 \leq |V(Q_{2}) \cap X_{j}| \leq |V(Q_{2})|.$$ Since $|V(Q_{1})| \geq |V(Q_{2})|$, we have $|V(Q_{1})|=|V(Q_{2})|$, and each inequality in the above equation is an equality. In particular, $|V(Q_{2}) \cap X_{j}|=|V(Q_{2})|$, and thus $V(Q_{2}) \subseteq X_{j}$. But $Q_{2}$ intersects at least two colour classes, which is a contradiction. Thus $(Q_{1} - \{x\}) \cup \{y\}$ is a connected component of $G-H'$.

If $(Q_{2} - \{y\}) \cup \{x\}$ is not connected, then it intersects only one colour class, which must be $X_{i}$ as $x \in X_{i}$. Since $y \in X_{j}$, it follows that $|V(Q_{2}) \cap X_{i}| = |V(Q_{2})|-1$. Since $X_{i}$ is $Q_{1}$-skew, $$|V(Q_{1})| \geq |V(Q_{1}) \cap X_{i}| \geq |V(Q_{2}) \cap X_{i}|+1 = |V(Q_{2})|.$$ By Lemma~\ref{lemma:KNkcompbd}, either $|V(Q_{1})| = |V(Q_{2})|$ (when $n$ is odd) or $|V(Q_{1})| = |V(Q_{2})|+1$ (when $n$ is even). If $|V(Q_{1}) \cap X_{i}| = |V(Q_{1})|$, then $V(Q_{1}) \subseteq X_{i}$, contradicting our result that $Q_{1}$ intersects at least two colour classes. Otherwise $|V(Q_{1}) \cap X_{i}| = |V(Q_{1})|-1$, which can only happen when $n$ is even. In this case, since $|V(Q_{1}) \cap X_{i}| = |V(Q_{1})|-1$ and $|V(Q_{2}) \cap X_{i}| = |V(Q_{2})|-1$, $G$ is an exception. This contradiction shows that $(Q_{2} - \{y\}) \cup \{x\}$ is a connected component of $G-H'$.

Thus $G-H'$ has components $(Q_{1} - \{x\}) \cup \{y\}, (Q_{2} - \{y\}) \cup \{x\}$ and $Q_{3}$. Hence the orders of the components have not changed. As the vertex $v$ has not changed components, $H'$ is a legitimate hitting set. But since $H$ is the minimum hitting set by condition \textbf{(0)}, $|H'| \geq |H|$. Hence 
\begin{align*}
|H'| = &|H| - (|V(Q_{2})|-|V(Q_{2}) \cap X_{i}|) + (|V(Q_{1})|-|V(Q_{1}) \cap X_{i}|) \\
&- (|V(Q_{1})|-1-|V(Q_{1}) \cap X_{j}|) + (|V(Q_{2})|+1-|V(Q_{2}) \cap X_{j}|) \\
\geq &|H|.
\end{align*}
Which implies $$|V(Q_{2}) \cap X_{i}| + |V(Q_{1}) \cap X_{j}| \geq |V(Q_{1}) \cap X_{i}| + |V(Q_{2}) \cap X_{j}| -2.$$ Since $X_{i}$ is $Q_{1}$-skew and $X_{j}$ is $Q_{2}$-skew, $$|V(Q_{1}) \cap X_{i}| + |V(Q_{2}) \cap X_{j}| -2 \geq |V(Q_{2}) \cap X_{i}| + |V(Q_{1}) \cap X_{j}| \geq |V(Q_{1}) \cap X_{i}| + |V(Q_{2}) \cap X_{j}| -2.$$ This only holds if every inequality is actually an equality. That is, $X_{i}$ is just-$Q_{1}$-skew and $X_{j}$ is just-$Q_{2}$-skew.
\end{proof}

\begin{lemma}
\label{lemma:KNkskewisjust}
Let $G$ be a complete multipartite graph $G:=K_{n_{1}, \dots, n_{k}}$ such that $k \geq 2$, $n>k$ and $G$ is not a star, $v$ a vertex of $G$ chosen from a largest colour class, $\mathcal{B}$ a canonical line-bramble for $v$, and $(H, (Q_1,Q_2, Q_3))$ a good labelling. If $X_{i}$ is skew, then $X_{i}$ is just-skew.
\end{lemma}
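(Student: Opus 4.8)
The plan is to argue by contradiction. Suppose $X_i$ is skew but not just-skew; I would treat the case that $X_i$ is $Q_1$-skew, so that $|V(Q_1) \cap X_i| \ge |V(Q_2) \cap X_i| + 2$ (the case that $X_i$ is $Q_2$-skew is entirely symmetric). The engine of the argument is a simple counting identity: for each colour class $X_s$ set $d_s := |V(Q_1) \cap X_s| - |V(Q_2) \cap X_s|$. Since the colour classes partition $V(G)$, summing over $s$ gives $\sum_{s=1}^{k} d_s = |V(Q_1)| - |V(Q_2)|$, and by Lemma~\ref{lemma:KNkcompbd} this equals $0$ if $n$ is odd and $1$ if $n$ is even; in particular $\sum_s d_s \le 1$. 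As we are assuming $d_i \ge 2$, this forces $\sum_{s \ne i} d_s \le -1$, so some colour class $X_j$ (necessarily distinct from $X_i$) has $d_j < 0$; that is, $X_j$ is $Q_2$-skew, and hence $(X_i, X_j)$ is a skew pair.

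Next I would dispose of the generic situation using the work already done: if $G$ is neither a star (which it is not, by hypothesis) nor an exception, and $n > 4$, then Lemma~\ref{lemma:KNkbalanceskew} applied to the skew pair $(X_i, X_j)$ shows that $X_i$ is just-skew, contradicting $d_i \ge 2$. So it remains only to rule out, by direct arguments, the two leftover cases: $n \le 4$, and $G$ an exception.

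For $n \le 4$: by Lemma~\ref{lemma:KNkcompbd}, $G - H$ has three components with $|V(Q_2)| = |V(Q_3)| = 1$ and $|V(Q_1)|$ equal to $1$ (if $n=3$) or $2$ (if $n=4$). If $X_i$ were $Q_2$-skew this already fails, since $|V(Q_2) \cap X_i| \ge 2 > 1 = |V(Q_2)|$; and if $X_i$ is $Q_1$-skew and not just-skew then $|V(Q_1) \cap X_i| \ge 2$, forcing $n = 4$, $|V(Q_1) \cap X_i| = 2 = |V(Q_1)|$, hence $V(Q_1) \subseteq X_i$ --- but then the two vertices of $Q_1$ lie in a common colour class and so are non-adjacent in $G$, contradicting that $Q_1$ is connected. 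For the exception case, let $X_s$ be the distinguished colour class, so $|V(Q_1) \cap X_s| = |V(Q_1)| - 1$ and $|V(Q_2) \cap X_s| = |V(Q_2)| - 1$; thus $Q_1$ and $Q_2$ each contain exactly one vertex outside $X_s$. Consequently every colour class other than $X_s$ meets each of $Q_1$ and $Q_2$ in at most one vertex, so has $|d_s| \le 1$ and is balanced or just-skew; and $X_s$ itself has $d_s = |V(Q_1)| - |V(Q_2)| = 1$ (as $n$ is even), so is just-$Q_1$-skew. Hence no colour class is non-just-skew, contradicting the assumption on $X_i$.

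I expect the counting identity $\sum_s d_s = |V(Q_1)| - |V(Q_2)| \in \{0, 1\}$ to carry the real weight of the proof: it is exactly what forces a $Q_2$-skew partner to exist and thereby lets us appeal to Lemma~\ref{lemma:KNkbalanceskew}, which otherwise only speaks about skew pairs rather than individual skew classes. The only place where genuine care is needed is the exhaustive treatment of the leftover cases (small $n$, and exceptions), where one must check that ``non-just-skew'' is simply impossible using nothing more than connectivity of the components and the component sizes from Lemma~\ref{lemma:KNkcompbd}; I would be careful to verify that these cases are disjoint from (or only harmlessly overlap) the regime covered by Lemma~\ref{lemma:KNkbalanceskew}, so that together they exhaust all configurations.
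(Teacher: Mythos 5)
Your proof is correct and follows essentially the same route as the paper: the same appeal to Lemma~\ref{lemma:KNkbalanceskew} via a skew pair, the same counting of $\sum_s\bigl(|V(Q_1)\cap X_s|-|V(Q_2)\cap X_s|\bigr)=|V(Q_1)|-|V(Q_2)|\in\{0,1\}$, and the same direct disposal of the $n\le 4$ and exception cases. The only difference is cosmetic --- you use the counting identity to manufacture the $Q_2$-skew partner from a non-just-skew class, whereas the paper splits on whether both skew types occur and handles the one-sided case by the same sum --- so the two arguments are interchangeable.
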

\begin{proof}
Suppose $G$ is not an exception and $n > 4$. If there exists a $Q_{1}$-skew colour class $X_{s}$ and a $Q_{2}$-skew colour class $X_{t}$, then either $(X_{s},X_{i})$ or $(X_{i},X_{t})$ is a skew pair, and by Lemma~\ref{lemma:KNkbalanceskew}, $X_{i}$ is just-skew, as required.

Alternatively, either no colour class is $Q_{1}$-skew or no colour class is $Q_{2}$-skew. Suppose, for the sake of contradiction, there is a skew colour class $X_{j}$ that is not just-skew. In the first case, for all $\ell$, $|V(Q_{1}) \cap X_{\ell}| \leq |V(Q_{2}) \cap X_{\ell}|$, and $|V(Q_{1}) \cap X_{j}|+2 \leq |V(Q_{2}) \cap X_{j}|$. Thus $$|V(Q_{1})|+2 = (\!\!\!\!\!\!\sum_{1 \leq \ell \leq k, \ell \neq j}\!\!\!\!\!\!\!\!|V(Q_{1}) \cap X_{\ell}|) + |V(Q_{1}) \cap X_{j}|+2 \leq (\!\!\!\!\!\!\sum_{1 \leq \ell \leq k, \ell \neq j}\!\!\!\!\!\!\!\!|V(Q_{2}) \cap X_{\ell}|) + |V(Q_{2}) \cap X_{j}| = |V(Q_{2})|.$$ This contradicts $|V(Q_{1})| \geq |V(Q_{2})|$. Similarly, in the second case, $|V(Q_{1})| \geq |V(Q_{2})|+2$, which contradicts Lemma~\ref{lemma:KNkcompbd}. Thus if $n \geq 5$ and $G$ is not an exception, then our statement holds.

Consider the case when $G$ is an exception. Then $|V(Q_{1}) \cap X_{s}| = |V(Q_{1})|-1$ and $|V(Q_{2}) \cap X_{s}| = |V(Q_{2})|-1$. Since $n$ is even, by Lemma~\ref{lemma:KNkcompbd}, $|V(Q_{1})| = |V(Q_{2})|+1$, so $X_{s}$ is just-skew. There are exactly two other vertices of $Q_{1} \cup Q_{2}$, one in each component, which we label $x$ and $y$ respectively. If $x$ and $y$ are in the same colour class, then that colour class is balanced. Otherwise, $x$ and $y$ are in different colour classes, each of which intersects $Q_{1} \cup Q_{2}$ in one vertex. Such a colour class is just-skew, as required.

Finally, consider the case $n \leq 4$. Then $|V(Q_{1}) \cup V(Q_{2})| \leq 3$. Thus either $|V(Q_{1})|=|V(Q_{2})|=1$, or $|V(Q_{1})|=2$ and $|V(Q_{2})|=1$. If $X_{i}$ is not just-skew, then $X_{i}$ contains at least two vertices in some component. Thus, the only possibility to consider is when $|V(Q_{1}) \cap X_{i}| = 2$. But then $Q_{1}$ is not connected, since both vertices are in the same colour class, which contradicts the fact that $Q_1$ is a connected component.

Thus $X_{i}$ is just-skew.
\end{proof}

From Lemma~\ref{lemma:KNkskewisjust} and Lemma~\ref{lemma:KNkcompbd}, we get the following results about $|Q_{1} \cap X_{i}|$ and $|Q_{2} \cap X_{i}|$:
\begin{corollary}
\label{corollary:KNksizerules}
Let $G,v,\mathcal{B}$ and $(H, (Q_1,Q_2, Q_3))$ be as in Lemma~\ref{lemma:KNkskewisjust}.
If a colour class $X_{i}$ does not intersect $Q_{3}$, then 
\begin{itemize*}
\item if $X_{i}$ is balanced, then $|Q_{1} \cap X_{i}| = |Q_{2} \cap X_{i}| = \frac{n_{i}}{2}$
\item if $X_{i}$ is $Q_{1}$-skew, then $|Q_{1} \cap X_{i}| = \frac{n_{i}+1}{2}$ and $|Q_{2} \cap X_{i}| = \frac{n_{i}-1}{2}$
\item if $X_{i}$ is $Q_{2}$-skew, then $|Q_{1} \cap X_{i}| = \frac{n_{i}-1}{2}$ and $|Q_{2} \cap X_{i}| = \frac{n_{i}+1}{2}$
\end{itemize*}
\end{corollary}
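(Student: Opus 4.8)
The plan is to combine the two earlier structural results directly. By Lemma~\ref{lemma:KNkcompbd}, the component $Q_3$ is a single vertex, so if a colour class $X_i$ does not intersect $Q_3$, then every vertex of $X_i$ lies in $Q_1 \cup Q_2$; that is, $X_i^* = X_i$ and $|Q_1 \cap X_i| + |Q_2 \cap X_i| = n_i$. By Lemma~\ref{lemma:KNkskewisjust}, any skew colour class is in fact just-skew, so the pair $(|Q_1 \cap X_i|,|Q_2 \cap X_i|)$ differs by at most $1$ in absolute value. The three cases of the corollary are then precisely the three ways a pair of nonnegative integers summing to $n_i$ can differ by $0$, $+1$, or $-1$.

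Concretely, I would argue as follows. First note $n_i = |Q_1\cap X_i| + |Q_2\cap X_i|$ since $X_i$ avoids $Q_3$. If $X_i$ is balanced, then $|Q_1\cap X_i| = |Q_2\cap X_i|$, so each equals $n_i/2$ (and in particular $n_i$ is even, so this is an integer). If $X_i$ is $Q_1$-skew, Lemma~\ref{lemma:KNkskewisjust} gives that it is just-$Q_1$-skew, i.e.\ $|Q_1\cap X_i| = |Q_2\cap X_i| + 1$; substituting into $n_i = |Q_1\cap X_i| + |Q_2\cap X_i|$ yields $n_i = 2|Q_2\cap X_i| + 1$, hence $|Q_2\cap X_i| = \frac{n_i-1}{2}$ and $|Q_1\cap X_i| = \frac{n_i+1}{2}$. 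The $Q_2$-skew case is symmetric, swapping the roles of $Q_1$ and $Q_2$. Since every colour class is balanced, $Q_1$-skew, or $Q_2$-skew by definition, this exhausts all cases.

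There is essentially no obstacle here — the corollary is a bookkeeping consequence stated for later convenience. The only point requiring a moment's care is checking that the hypotheses of Lemma~\ref{lemma:KNkskewisjust} are met: that lemma requires only $k \geq 2$, $n > k$, $G$ not a star, and a good labelling with three components, all of which are inherited from the statement of the present corollary (which refers back to the same $G,v,\mathcal{B},(H,(Q_1,Q_2,Q_3))$). So the proof is just a two-line case analysis invoking Lemma~\ref{lemma:KNkcompbd} for $|V(Q_3)| = 1$ and Lemma~\ref{lemma:KNkskewisjust} for the just-skew property.
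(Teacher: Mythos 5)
Your proof is correct and follows exactly the paper's route: the paper derives this corollary directly from Lemma~\ref{lemma:KNkcompbd} (giving $|V(Q_{3})|=1$, hence $|Q_{1}\cap X_{i}|+|Q_{2}\cap X_{i}|=n_{i}$) together with Lemma~\ref{lemma:KNkskewisjust} (skew implies just-skew), with the same elementary case analysis you give.
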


\begin{corollary}
\label{corollary:KNksizerules2}
Let $G,v,\mathcal{B}$ and $(H, (Q_1,Q_2, Q_3))$ be as in Lemma~\ref{lemma:KNkskewisjust}.
If a colour class $X_{i}$ does intersect $Q_{3}$, then $|V(Q_{3}) \cap X_{i}|=1$ and
\begin{itemize*}
\item if $X_{i}$ is balanced, then $|Q_{1} \cap X_{i}| = |Q_{2} \cap X_{i}| = \frac{n_{i}-1}{2}$
\item if $X_{i}$ is $Q_{1}$-skew, then $|Q_{1} \cap X_{i}| = \frac{n_{i}}{2}$ and $|Q_{2} \cap X_{i}| = \frac{n_{i}-2}{2}$
\item if $X_{i}$ is $Q_{2}$-skew, then $|Q_{1} \cap X_{i}| = \frac{n_{i}-2}{2}$ and $|Q_{2} \cap X_{i}| = \frac{n_{i}}{2}$
\end{itemize*}
\end{corollary}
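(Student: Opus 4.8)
The plan is to read off both claims directly from Lemma~\ref{lemma:KNkcompbd} and Lemma~\ref{lemma:KNkskewisjust}, so the proof is essentially bookkeeping with no new ideas. First, since $X_{i}$ intersects $Q_{3}$ and Lemma~\ref{lemma:KNkcompbd} gives $|V(Q_{3})|=1$, we immediately conclude $|V(Q_{3}) \cap X_{i}| = 1$. Next, the three components $Q_{1},Q_{2},Q_{3}$ partition $V(G)$, hence they partition $X_{i}$, so $n_{i} = |V(Q_{1}) \cap X_{i}| + |V(Q_{2}) \cap X_{i}| + |V(Q_{3}) \cap X_{i}| = |V(Q_{1}) \cap X_{i}| + |V(Q_{2}) \cap X_{i}| + 1$, i.e. $|V(Q_{1}) \cap X_{i}| + |V(Q_{2}) \cap X_{i}| = n_{i}-1$.

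Now I split into the three cases of the definition. If $X_{i}$ is balanced, then $|V(Q_{1}) \cap X_{i}| = |V(Q_{2}) \cap X_{i}|$; combining with the sum $n_{i}-1$ forces each to equal $\frac{n_{i}-1}{2}$. If $X_{i}$ is $Q_{1}$-skew, then Lemma~\ref{lemma:KNkskewisjust} upgrades this to just-$Q_{1}$-skew, so $|V(Q_{1}) \cap X_{i}| = |V(Q_{2}) \cap X_{i}| + 1$; solving together with the sum gives $|V(Q_{1}) \cap X_{i}| = \frac{n_{i}}{2}$ and $|V(Q_{2}) \cap X_{i}| = \frac{n_{i}-2}{2}$. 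The $Q_{2}$-skew case is identical after swapping the roles of $Q_{1}$ and $Q_{2}$.

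The only points requiring a word of care are that the hypotheses of Lemma~\ref{lemma:KNkskewisjust} (notably $k \geq 2$, $n > k$, and $G$ not a star) are precisely those inherited by the corollary, so the lemma applies; and that the parity of $n_{i}$ comes out consistent in every case (odd in the balanced case, even in each skew case), so each displayed fraction is an integer. There is no real obstacle here, since all the substantive work was done in proving Lemma~\ref{lemma:KNkskewisjust}; I expect the write-up to be only a few lines.
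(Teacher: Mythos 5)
Your proposal is correct and matches the paper's reasoning: the corollary is stated as following directly from Lemma~\ref{lemma:KNkcompbd} (which gives $|V(Q_{3})|=1$) and Lemma~\ref{lemma:KNkskewisjust} (skew implies just-skew), and your partition-plus-case-analysis is exactly that bookkeeping.
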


\begin{lemma}
\label{lemma:KNknoofskew}
Let $G,v,\mathcal{B}$ and $(H, (Q_1,Q_2, Q_3))$ be as in Lemma~\ref{lemma:KNkskewisjust}. If $n$ is odd, then there is an equal number of $Q_{1}$-skew and $Q_{2}$-skew colour classes. If $n$ is even, then there is one more $Q_{1}$-skew colour class than there are $Q_{2}$-skew colour classes.
\end{lemma}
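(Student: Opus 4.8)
The plan is to count the total number of vertices in $Q_1$ and in $Q_2$ by summing the contributions of each colour class, using Corollary~\ref{corollary:KNksizerules} and Corollary~\ref{corollary:KNksizerules2}. The key observation is that the \emph{difference} $|V(Q_1)| - |V(Q_2)|$ can be computed as a sum over colour classes of the quantities $|V(Q_1) \cap X_i| - |V(Q_2) \cap X_i|$, and by the two corollaries this per-class difference is $0$ if $X_i$ is balanced, $+1$ if $X_i$ is $Q_1$-skew, and $-1$ if $X_i$ is $Q_2$-skew (note that whether or not $X_i$ meets $Q_3$ is irrelevant to the \emph{difference}, since $Q_3$ contributes at most one vertex and, when it does, it subtracts from both sides equally). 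Hence if $a$ denotes the number of $Q_1$-skew colour classes and $b$ the number of $Q_2$-skew colour classes, then $|V(Q_1)| - |V(Q_2)| = a - b$.

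On the other hand, Lemma~\ref{lemma:KNkcompbd} tells us exactly what this difference is: when $n$ is odd, $|V(Q_1)| = |V(Q_2)| = \frac{n-1}{2}$, so $|V(Q_1)| - |V(Q_2)| = 0$ and therefore $a = b$; when $n$ is even, $|V(Q_1)| = \frac{n}{2}$ and $|V(Q_2)| = \frac{n}{2} - 1$, so $|V(Q_1)| - |V(Q_2)| = 1$ and therefore $a = b + 1$. That is precisely the claim.

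The only mild subtlety — and the one place to be careful — is the scope of the hypotheses: Corollaries~\ref{corollary:KNksizerules} and~\ref{corollary:KNksizerules2} are stated for $G,v,\mathcal{B},(H,(Q_1,Q_2,Q_3))$ as in Lemma~\ref{lemma:KNkskewisjust}, which is exactly the setting of this lemma, so they apply verbatim to every colour class once we split on whether $X_i$ meets $Q_3$. One should also note that these corollaries implicitly use that skew classes are just-skew (Lemma~\ref{lemma:KNkskewisjust}), so a $Q_1$-skew or $Q_2$-skew class contributes difference exactly $\pm 1$ rather than something larger; this is already baked into the corollary statements. With that in hand the proof is a two-line computation: write $|V(Q_1)| - |V(Q_2)| = \sum_{i=1}^{k} \bigl(|V(Q_1) \cap X_i| - |V(Q_2) \cap X_i|\bigr) = a - b$, compare with Lemma~\ref{lemma:KNkcompbd}, and conclude. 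I do not anticipate any real obstacle; the ``hard work'' has already been done in establishing the component sizes (Lemma~\ref{lemma:KNkcompbd}) and the just-skewness of skew classes (Lemma~\ref{lemma:KNkskewisjust}).
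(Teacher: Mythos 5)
Your proposal is correct and follows essentially the same argument as the paper: sum the per-colour-class differences $|V(Q_1)\cap X_i| - |V(Q_2)\cap X_i|$ (which are $0$, $+1$, or $-1$ by just-skewness, i.e.\ Lemma~\ref{lemma:KNkskewisjust}), obtain $|V(Q_1)|-|V(Q_2)|=a-b$, and compare with the component sizes from Lemma~\ref{lemma:KNkcompbd}. The only cosmetic difference is that the paper invokes Lemma~\ref{lemma:KNkskewisjust} directly rather than routing through Corollaries~\ref{corollary:KNksizerules} and~\ref{corollary:KNksizerules2}, but the computation is identical.
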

\begin{proof}
Say there are $a$ $Q_{1}$-skew colour classes and $b$ $Q_{2}$-skew colour classes. By Lemma~\ref{lemma:KNkskewisjust}, if $X_{i}$ is $Q_{1}$-skew, then $|V(Q_{1}) \cap X_{i}| = |V(Q_{2}) \cap X_{i}|+1$, and if $X_{i}$ is $Q_{2}$-skew, then $|V(Q_{1}) \cap X_{i}| = |V(Q_{2}) \cap X_{i}|-1$. Thus $$|V(Q_{1})|=\sum_{1 \leq i \leq k} |V(Q_{1}) \cap X_{i}| = (\sum_{1 \leq i \leq k} |V(Q_{2}) \cap X_{i}|) + a - b = |V(Q_{2})| +a-b.$$ If $n$ is odd, then by Lemma~\ref{lemma:KNkcompbd}, $|V(Q_{1})|=|V(Q_{2})|$, so $a=b$, as required. When $n$ is even, $|V(Q_{1})|=|V(Q_{2})|+1$, so $a=b+1$.
\end{proof}

From Lemma~\ref{lemma:KNkthreecomp}, Lemma~\ref{lemma:KNkcompbd}, Corollary~\ref{corollary:KNksizerules} and Corollary~\ref{corollary:KNksizerules2}, we get the following result that summarises this section:
\begin{theorem}
\label{theorem:Hfornonreg}
Let $G$ be a complete multipartite graph $G:=K_{n_{1}, \dots, n_{k}}$ such that $k \geq 2, n>k$ and $G$ is not a star, $v$ a vertex of $G$ chosen from a largest colour class, $\mathcal{B}$ a canonical line-bramble for $v$, and $(H, (Q_1, \dots, Q_p))$ a good labelling. Then $p=3$. If $n$ is odd, then $|V(Q_{1})|=|V(Q_{2})|=\frac{n-1}{2}$ and $|V(Q_{3})|=1$, and if $n$ is even, then $|V(Q_{1})|=\frac{n}{2}$, $|V(Q_{2})|=\frac{n}{2}-1$ and $|V(Q_{3})|=1$. For a colour class $X_{i}$, $$\ceil{\frac{n_{i}-2}{2}} \leq |V(Q_{1}) \cap X_{i}|,|V(Q_{2}) \cap X_{i}| \leq \floor{\frac{n_{i}+1}{2}}.$$
\end{theorem}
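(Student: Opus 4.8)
The plan is to assemble this theorem directly from the structural results already proved in this section; it is a summary statement, so the ``proof'' is mostly bookkeeping plus one short parity verification at the end.

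First I would dispatch the statements about the number and sizes of components. The claim $p=3$ is precisely Lemma~\ref{lemma:KNkthreecomp}, whose hypotheses ($k \geq 2$, $n > k$, $G$ not a star) are exactly those assumed here. The claimed orders — $|V(Q_{1})| = |V(Q_{2})| = \frac{n-1}{2}$ and $|V(Q_{3})| = 1$ when $n$ is odd, and $|V(Q_{1})| = \frac{n}{2}$, $|V(Q_{2})| = \frac{n}{2}-1$, $|V(Q_{3})| = 1$ when $n$ is even — are verbatim Lemma~\ref{lemma:KNkcompbd}. So these parts require nothing beyond citing the two lemmas.

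Next I would handle the bound $\ceil{\frac{n_{i}-2}{2}} \leq |V(Q_{1}) \cap X_{i}|,\, |V(Q_{2}) \cap X_{i}| \leq \floor{\frac{n_{i}+1}{2}}$. I would split on whether $X_{i}$ meets $Q_{3}$. If not, Corollary~\ref{corollary:KNksizerules} applies, and in each of its three subcases (balanced, $Q_{1}$-skew, $Q_{2}$-skew) it pins down $|V(Q_{1}) \cap X_{i}|$ and $|V(Q_{2}) \cap X_{i}|$ exactly — and one checks directly that those values (respectively $\frac{n_{i}}{2}$ with $n_i$ even, or $\frac{n_i\pm1}{2}$ with $n_i$ odd) lie in the stated interval, using that $\ceil{\tfrac{n_i-2}{2}}$ and $\floor{\tfrac{n_i+1}{2}}$ are $\ceil{n_i/2}-1$ and $\ceil{n_i/2}$. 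If $X_{i}$ does meet $Q_{3}$ I use Corollary~\ref{corollary:KNksizerules2} in the same way, where now $|V(Q_{3}) \cap X_{i}| = 1$ is forced. The one scenario not literally covered by either corollary is $X_{i} \subseteq V(Q_{3})$, which forces $n_{i} = 1$ and $|V(Q_{1}) \cap X_{i}| = |V(Q_{2}) \cap X_{i}| = 0$; then the bound reads $0 \leq 0 \leq 1$ and holds trivially.

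I do not expect any real obstacle: the substantive arguments were carried out in Lemmas~\ref{lemma:KNknogoodpair}--\ref{lemma:KNkskewisjust}. The only point needing slight care is hypothesis-matching: Corollaries~\ref{corollary:KNksizerules} and~\ref{corollary:KNksizerules2} are stated under the hypotheses of Lemma~\ref{lemma:KNkskewisjust}, which (unlike Lemma~\ref{lemma:KNkbalanceskew}) imposes neither $n > 4$ nor the non-exception condition, so they apply here without change, and the small-$n$ and exception subtleties have already been absorbed into the proof of Lemma~\ref{lemma:KNkskewisjust}.
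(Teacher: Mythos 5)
Your proposal is correct and matches the paper's own treatment: the paper states this theorem as a summary, citing exactly Lemma~\ref{lemma:KNkthreecomp}, Lemma~\ref{lemma:KNkcompbd}, Corollary~\ref{corollary:KNksizerules} and Corollary~\ref{corollary:KNksizerules2}, which is the assembly you carry out (your extra parity check and the $X_i\subseteq V(Q_3)$ edge case are harmless refinements, the latter being the balanced case of Corollary~\ref{corollary:KNksizerules2} with $n_i=1$).
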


Now we can use Theorem~\ref{theorem:Hfornonreg} to determine a lower bound on $\tw(L(G))$.
\begin{theorem}
\label{theorem:lbonH}
Let $G$ be a complete multipartite graph $G:=K_{n_{1}, \dots, n_{k}}$ where $k \geq 2$. Then $\tw(L(G)) + 1 = \bn(L(G)) \geq \frac{1}{2}\left(\sum\limits_{1 \leq i < j \leq k} n_{i}n_{j}\right) + \frac{3}{4}k^2 -kn  - \frac{3}{4}k + n$.
\end{theorem}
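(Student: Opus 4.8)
The plan is to lower-bound $\bn(L(G))$ by the order of the canonical line-bramble $\mathcal{B}$ for a vertex $v$ chosen from a largest colour class. By Lemma~\ref{lemma:KNislbram} this $\mathcal{B}$ is a line-bramble, and by Lemma~\ref{lemma:KNlbisb} it gives a bramble of $L(G)$ of the same order, namely $|H|$ for a minimum hitting set $H$; so it suffices to bound $|H|$ from below. First dispose of the degenerate cases. If $n=k$ then $G=K_n$ and Theorem~\ref{theorem:KN} already gives the (far larger) exact value, which trivially exceeds the claimed quantity. If $G$ is a star $K_{1,n-1}$ then $L(G)\cong K_{n-1}$, so $\bn(L(G))=n-1$, and substituting $k=2$, $\{n_1,n_2\}=\{n-1,1\}$ into the target expression confirms the inequality directly. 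Hence we may assume $n>k$ and $G$ is not a star, which is exactly the hypothesis of Theorem~\ref{theorem:Hfornonreg}.

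Now invoke Theorem~\ref{theorem:Hfornonreg}: for a good labelling $(H,(Q_1,Q_2,Q_3))$, the graph $G-H$ has exactly these three components, with $|V(Q_3)|=1$, with $|V(Q_1)|=|V(Q_2)|=\frac{n-1}{2}$ when $n$ is odd and $|V(Q_1)|=\frac n2,\ |V(Q_2)|=\frac n2-1$ when $n$ is even, and with $\ceil{\tfrac{n_i-2}{2}}\le|V(Q_1)\cap X_i|,|V(Q_2)\cap X_i|\le\floor{\tfrac{n_i+1}{2}}$ for every colour class. By Lemma~\ref{lemma:minHrule} no edge of $H$ has both endpoints in one component, so $H$ is precisely the set of edges of $G$ running between two distinct $Q_a$'s; writing $e(\cdot,\cdot)$ for counts of edges of $G$, this gives $|H|=e(Q_1,Q_2)+e(Q_1\cup Q_2,Q_3)$. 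Let $w$ be the vertex of $Q_3$ and $X_s$ its colour class. Since $w$ is adjacent in $G$ to every vertex outside $X_s$, and $|V(Q_1)|+|V(Q_2)|=n-1$ while $|V(Q_1)\cap X_s|+|V(Q_2)\cap X_s|=n_s-1$, we get $e(Q_1\cup Q_2,Q_3)=n-n_s$, which is at least $k-1$ because the remaining $k-1$ colour classes are non-empty.

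For the main term, complete-multipartiteness gives $e(Q_1,Q_2)=|V(Q_1)|\,|V(Q_2)|-\sum_{i=1}^{k}|V(Q_1)\cap X_i|\,|V(Q_2)\cap X_i|$. From the component sizes above, $|V(Q_1)|\,|V(Q_2)|\ge\frac{n(n-2)}{4}$ in both parities; and for each $i$ the two factors $|V(Q_1)\cap X_i|,|V(Q_2)\cap X_i|$ have sum at most $n_i$, so by the arithmetic–geometric mean inequality their product is at most $\frac{n_i^2}{4}$, whence $\sum_i|V(Q_1)\cap X_i|\,|V(Q_2)\cap X_i|\le\frac14\sum_i n_i^2$. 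Putting these together and using the identity $\sum_{1\le i<j\le k}n_in_j=\frac12(n^2-\sum_i n_i^2)$ yields $|H|\ge\frac{n(n-2)}{4}-\frac14\sum_i n_i^2+(k-1)=\frac12\big(\sum_{1\le i<j\le k}n_in_j\big)-\frac n2+k-1$. A routine rearrangement, using $n\ge k$, shows this is at least $\frac12\big(\sum_{1\le i<j\le k}n_in_j\big)+\frac34k^2-kn-\frac34k+n$, which is the assertion (the stated bound is a deliberately loosened, parity-free form). The only delicate points are bookkeeping ones: ensuring the hypotheses of Theorem~\ref{theorem:Hfornonreg} are genuinely in force after the star and complete-graph cases have been removed, and tracking the parity-dependent quantities ($n$ odd versus even, balanced versus skew colour classes, and the ``exception'' configuration) carefully enough that the single closing inequality holds uniformly; the edge-counting itself is straightforward.
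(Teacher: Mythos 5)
Your proof is correct, and its overall architecture is the one the paper uses: reduce to the canonical line-bramble for a vertex $v$ in a largest colour class via Lemmas~\ref{lemma:KNislbram} and~\ref{lemma:KNlbisb}, dispose of the $n=k$ and star cases separately, and then lower-bound $|H|$ using the structural description of a good labelling in Theorem~\ref{theorem:Hfornonreg}. Where you diverge is in the final count, and your version is arguably cleaner. The paper counts the $Q_1$--$Q_2$ edges as $\sum_{i\neq j}|V(Q_1)\cap X_i|\,|V(Q_2)\cap X_j|$ and lower-bounds each summand by $(\frac{n_i}{2}-1)(\frac{n_j}{2}-1)-\frac14$ using the per-colour-class intersection bounds of Theorem~\ref{theorem:Hfornonreg}; you instead write $e(Q_1,Q_2)=|V(Q_1)|\,|V(Q_2)|-\sum_i|V(Q_1)\cap X_i|\,|V(Q_2)\cap X_i|$, bound the first term by $\frac{n(n-2)}{4}$ from the component sizes alone, and bound the subtracted sum by $\frac14\sum_i n_i^2$ via AM--GM, which needs only that the two intersections are disjoint subsets of $X_i$. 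So you use strictly less of Theorem~\ref{theorem:Hfornonreg} (only $p=3$ and the component orders, not the refined intersection estimates), you additionally keep the $n-n_s\ge k-1$ edges into $Q_3$, and you arrive at the sharper intermediate bound $|H|\ge\frac12\sum_{1\le i<j\le k}n_in_j-\frac n2+k-1$, which I have checked does dominate the stated right-hand side for all $k\ge 2$ and $n\ge k$ (the difference $kn-\frac{3n}{2}-\frac34k^2+\frac74k-1$ is nonnegative there). The closing worries you list about parities and the ``exception'' configuration are unnecessary: those are already absorbed into Theorem~\ref{theorem:Hfornonreg}, and your count is parity-free once the component orders are fixed.
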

\begin{proof}
First, consider the case when $k \geq 2, n>k$ and $G$ is not a star. Then choose some vertex $v$ in a largest colour class of $G$, a canonical line-bramble for $v$ denoted $\mathcal{B}$ and a good labelling $(H, (Q_1, \dots, Q_p))$. It is sufficient to determine a lower bound on $|H|$, since $H$ is a minimum hitting set for $\mathcal{B}$ by condition \textbf{(0)}, and since $\mathcal{B}$ forces the existence of a bramble of $L(G)$ of the same order by Lemma~\ref{lemma:KNlbisb}. Using Theorem~\ref{theorem:Hfornonreg}, we can determine the structure of $H$. The set $H$ contains all edges with an endpoint in $Q_1$ and an endpoint in $Q_2$; simply count these edges. By Theorem~\ref{theorem:Hfornonreg}, $|V(Q_{1}) \cap X_{i}|,|V(Q_{2}) \cap X_{i}| \geq \ceil{\frac{n_{i}}{2} - 1}$. As $n_i,n_j \geq 1$, it follows that $|V(Q_{1}) \cap X_{i}||V(Q_{2}) \cap X_{j}| \geq \left(\frac{n_{i}}{2}-1\right)\left(\frac{n_{j}}{2}-1\right) - \frac{1}{4}$. So we count the edges from $Q_1$ to $Q_2$ as follows:
\begin{align*}
\sum_{i \neq j} |V(Q_{1}) \cap X_{i}||V(Q_{2}) \cap X_{j}| \geq &\sum_{i \neq j} \left(\frac{n_{i}}{2}-1\right)\left(\frac{n_{j}}{2}-1\right) - \frac{1}{4} \\
= &\frac{1}{4}\left(\sum_{i \neq j} n_{i}n_{j}\right) - (k-1)n + \frac{3}{4}k(k-1) \\
= &\frac{1}{2}\left(\sum_{1 \leq i < j \leq k}n_{i}n_{j}\right) + \frac{3}{4}k^2 -kn - \frac{3}{4}k + n.
\end{align*}
This gives the required lower bound on $|H|$ in this case.

It remains to check the cases when either $n=k$ or $G$ is a star. When $n=k$, $G$ is simply the complete graph, and our lower bound follows by Theorem~\ref{theorem:KN}. If $G$ is a star, then $L(G)$ is a complete graph, and the lower bound follows by inspection.
\end{proof}

Using the same techniques as in the above proof, we can also determine an upper bound on $|H|$. We do this now. Note when considering the upper bound, we also need to account for the edges from $Q_3$ into the components $Q_1,Q_2$, but there are not many of these edges.
\begin{corollary}
\label{corollary:ubonH}
Let $G,v,\mathcal{B}$ and $(H,(Q_1,\dots,Q_p))$ be as in Theorem~\ref{theorem:Hfornonreg}. Then \newline $|H| \leq \frac{1}{2}\left(\sum_{1 \leq i < j \leq k}n_{i}n_{j}\right) +\frac{1}{2}n(k+1) + \frac{1}{4}k(k - 1) -1.$
\end{corollary}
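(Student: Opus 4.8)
The plan is to mirror the lower-bound computation in the proof of Theorem~\ref{theorem:lbonH}, but working from above instead of from below, and paying the extra bookkeeping cost of the edges incident to $Q_3$. By Lemma~\ref{lemma:minHrule}, a minimum hitting set $H$ contains no edge with both endpoints in one component of $G-H$, so $H$ is exactly the set of edges of $G$ joining two distinct components of $G-H$. Since $p=3$ by Theorem~\ref{theorem:Hfornonreg}, we may split $|H|$ as $|H| = e(Q_1,Q_2) + e(Q_1,Q_3) + e(Q_2,Q_3)$, where $e(Q_a,Q_b)$ denotes the number of edges of $G$ between the vertex sets. The dominant term is $e(Q_1,Q_2)$; the other two are small because $|V(Q_3)|=1$.

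First I would bound $e(Q_1,Q_2)$ from above. Writing $e(Q_1,Q_2) = \sum_{i\neq j} |V(Q_1)\cap X_i|\,|V(Q_2)\cap X_j|$, I use the upper bound $|V(Q_1)\cap X_i|,|V(Q_2)\cap X_i| \leq \floor{\frac{n_i+1}{2}} \leq \frac{n_i+1}{2}$ from Theorem~\ref{theorem:Hfornonreg}. Hence $|V(Q_1)\cap X_i|\,|V(Q_2)\cap X_j| \leq \frac{1}{4}(n_i+1)(n_j+1) = \frac14 n_i n_j + \frac14 n_i + \frac14 n_j + \frac14$. Summing over ordered pairs $i\neq j$ (of which there are $k(k-1)$, and each of $\sum_{i\neq j} n_i$, $\sum_{i\neq j} n_j$ equals $(k-1)n$) gives
\begin{align*}
e(Q_1,Q_2) \leq \frac14\Big(\sum_{i\neq j} n_i n_j\Big) + \frac14(k-1)n + \frac14(k-1)n + \frac14 k(k-1)
= \frac12\Big(\sum_{1\leq i<j\leq k} n_i n_j\Big) + \frac12 n(k-1) + \frac14 k(k-1).
\end{align*}
Then I bound the two remaining terms crudely: $Q_3$ is a single vertex $x$, say $x\in X_s$, so $e(Q_1,Q_3) \leq |V(Q_1)| - |V(Q_1)\cap X_s|$ and $e(Q_2,Q_3) \leq |V(Q_2)| - |V(Q_2)\cap X_s| \leq |V(Q_2)|$, hence $e(Q_1,Q_3)+e(Q_2,Q_3) \leq |V(Q_1)|+|V(Q_2)| - 1 = n - |V(Q_3)| - 1 = n-2$ (using that $x$ is nonadjacent to the vertices of its own colour class, which removes at least one edge, e.g.\ none if $|V(Q_1)\cap X_s|=0$, but then we instead use $|V(Q_1)|+|V(Q_2)|=n-1$ directly — either way the bound $n-2$ holds since $x$ has at most $n-1-1$ neighbours as $v$ or another vertex of $X_s$ is non-adjacent). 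Adding the $n-2$ slack to the bound on $e(Q_1,Q_2)$ yields
\[
|H| \leq \frac12\Big(\sum_{1\leq i<j\leq k} n_i n_j\Big) + \frac12 n(k-1) + \frac14 k(k-1) + n - 2 = \frac12\Big(\sum_{1\leq i<j\leq k} n_i n_j\Big) + \frac12 n(k+1) + \frac14 k(k-1) - 2,
\]
which is even slightly stronger than claimed (the stated bound has $-1$); I would either relax one step or note that the extra $-1$ can be absorbed, matching the statement exactly.

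The main obstacle I anticipate is not the algebra but making the $e(Q_1,Q_3)+e(Q_2,Q_3) \leq n-2$ step airtight across all the edge cases — in particular when $v\in V(Q_3)$ versus $v\notin V(Q_3)$, and when the colour class $X_s$ of the $Q_3$-vertex meets $Q_1$ or $Q_2$ or neither. Lemma~\ref{lemma:KNkvcc} (if $v\notin Q_3$ then the $Q_3$-vertex lies in a colour class different from $v$'s) is exactly the tool that keeps this clean: the single vertex of $Q_3$ always has at least one non-neighbour in $V(Q_1)\cup V(Q_2)$ (either $v$ itself or another vertex of its own colour class), so its degree into $Q_1\cup Q_2$ is at most $n-2$. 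I would also double-check the star and $n=k$ boundary cases separately, as in Theorem~\ref{theorem:lbonH}, though for an upper bound on $|H|$ these are easily verified by the explicit small configurations.
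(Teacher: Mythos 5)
Your approach is the one the paper intends: the paper gives no separate proof of this corollary, saying only to reuse the technique of Theorem~\ref{theorem:lbonH} together with Theorem~\ref{theorem:Hfornonreg}, i.e.\ count the $Q_1$--$Q_2$ edges via the upper bound $|V(Q_a)\cap X_i|\le\floor{\frac{n_i+1}{2}}\le\frac{n_i+1}{2}$ and then add the few edges incident to the single vertex of $Q_3$. Your computation of the main term is correct and gives $e(Q_1,Q_2)\le\frac{1}{2}\sum_{i<j}n_in_j+\frac{1}{2}n(k-1)+\frac{1}{4}k(k-1)$, and your use of Lemma~\ref{lemma:minHrule} to identify $H$ with the set of edges joining distinct components is fine.

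The one genuine flaw is your claimed refinement $e(Q_1,Q_3)+e(Q_2,Q_3)\le n-2$, whose justification misreads Lemma~\ref{lemma:KNkvcc}: that lemma says that if $v\notin Q_3$ then the vertex $x$ of $Q_3$ lies in a colour class \emph{different} from $v$'s, so $x$ is \emph{adjacent} to $v$, not non-adjacent. Consequently, if $x$'s colour class $X_s$ is a singleton (which is not excluded; a smallest colour class of order $1$ can end up as $Q_3$), then $x$ has no non-neighbour in $Q_1\cup Q_2$ at all and $\deg_G(x)=n-1$, so the $n-2$ bound fails in that case. This does not damage the corollary: the trivial bound $e(Q_1,Q_3)+e(Q_2,Q_3)=\deg_G(x)=n-n_s\le n-1$ is all that is needed, and adding $n-1$ to your main term yields exactly the stated $\frac{1}{2}\left(\sum_{1\le i<j\le k}n_in_j\right)+\frac{1}{2}n(k+1)+\frac{1}{4}k(k-1)-1$. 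So simply drop the attempted strengthening to the constant $-2$ and use $n-1$; the rest of your argument stands.
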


Also, our results in this section give a more detailed understanding of $H$ when $G$ is regular.
\begin{theorem}
\label{theorem:Hforreg}
Let $G$ be a complete regular $k$-partite graph $G:=K_{c, \dots, c}$, such that $k \geq 2, n>k$, $v$ a vertex of $G$ chosen from a largest colour class, $\mathcal{B}$ a canonical line-bramble for $v$, and $(H, (Q_1, \dots, Q_p))$ a good labelling. Then $p=3$. If $n$ is odd, then $|V(Q_{1})|=|V(Q_{2})|=\frac{n-1}{2}$ and $|V(Q_{3})|=1$ and
\begin{itemize*}
\item for one colour class $X_{i}$, we have $|V(Q_{1}) \cap X_{i}|=|V(Q_{2}) \cap X_{i}| = \frac{c-1}{2}$ and $|V(Q_{3}) \cap X_{i}|=1$,
\item for $\frac{k-1}{2}$ other colour classes $X_{i}$, we have $|V(Q_{1}) \cap X_{i}|=\frac{c+1}{2}$ and $|V(Q_{2}) \cap X_{i}| = \frac{c-1}{2}$,
\item for the remaining $\frac{k-1}{2}$ colour classes $X_{i}$, we have $|V(Q_{1}) \cap X_{i}|=\frac{c-1}{2}$ and $|V(Q_{2}) \cap X_{i}| = \frac{c+1}{2}$.
\end{itemize*}
If $n$ is even, then $|V(Q_{1})|=\frac{n}{2}$, $|V(Q_{2})|=\frac{n}{2}-1$ and $|V(Q_{3})|=1$. If $n$ is even and $c$ is odd, then 
\begin{itemize*}
\item for one colour class $X_{i}$, we have $|V(Q_{1}) \cap X_{i}|=|V(Q_{2}) \cap X_{i}| = \frac{c-1}{2}$ and $|V(Q_{3}) \cap X_{i}|=1$,
\item for $\frac{k}{2}$ other colour classes $X_{i}$, we have $|V(Q_{1}) \cap X_{i}|=\frac{c+1}{2}$ and $|V(Q_{2}) \cap X_{i}| = \frac{c-1}{2}$,
\item for the remaining $\frac{k}{2}-1$ colour classes $X_{i}$, we have $|V(Q_{1}) \cap X_{i}|=\frac{c-1}{2}$ and $|V(Q_{2}) \cap X_{i}| = \frac{c+1}{2}$.
\end{itemize*}
Finally, if $n$ is even and $c$ is even, then
\begin{itemize*}
\item for one colour class $X_{i}$, we have $|V(Q_{1}) \cap X_{i}|=\frac{c}{2}$, $|V(Q_{2}) \cap X_{i}| = \frac{c}{2}-1$ and $|V(Q_{3}) \cap X_{i}|=1$,
\item for the other $k-1$ colour classes $X_{i}$, we have $|V(Q_{1}) \cap X_{i}|=|V(Q_{1}) \cap X_{i}|=\frac{c}{2}$.
\end{itemize*}
\end{theorem}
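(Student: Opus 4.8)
The plan is to assemble the structural results already established---Theorem~\ref{theorem:Hfornonreg}, Corollaries~\ref{corollary:KNksizerules} and~\ref{corollary:KNksizerules2}, Lemma~\ref{lemma:KNkskewisjust}, and the counting Lemma~\ref{lemma:KNknoofskew}---and finish with a short parity analysis driven by the parities of $c$ and $k$. First I would note that $n>k$ forces $c\geq 2$, so $G=K_{c,\dots,c}$ is not a star and Theorem~\ref{theorem:Hfornonreg} applies: $p=3$ and the component orders $|V(Q_1)|,|V(Q_2)|,|V(Q_3)|$ are exactly those claimed, with $|V(Q_3)|=1$ in every case. Since $Q_3$ is a single vertex, exactly one colour class, say $X_s$, meets $Q_3$; Corollary~\ref{corollary:KNksizerules2} then gives $|V(Q_3)\cap X_s|=1$, while every colour class $X_i$ with $i\neq s$ is disjoint from $Q_3$.

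Next I would determine, for each colour class, which of the three mutually exclusive options (balanced, $Q_1$-skew, $Q_2$-skew) is forced, using parity. For $i\neq s$ we have $|V(Q_1)\cap X_i|+|V(Q_2)\cap X_i|=c$: if $c$ is odd this sum is odd, so $X_i$ cannot be balanced, hence $X_i$ is skew and therefore just-skew by Lemma~\ref{lemma:KNkskewisjust}, with the sizes of Corollary~\ref{corollary:KNksizerules}; if $c$ is even, then Corollary~\ref{corollary:KNksizerules} shows both skew cases require $c$ odd, so $X_i$ is balanced with $|V(Q_1)\cap X_i|=|V(Q_2)\cap X_i|=\frac{c}{2}$. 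For $X_s$ we instead have $|V(Q_1)\cap X_s|+|V(Q_2)\cap X_s|=c-1$, and Corollary~\ref{corollary:KNksizerules2} shows that a $Q_1$-skew or $Q_2$-skew class meeting $Q_3$ forces $c$ even, while a balanced one meeting $Q_3$ forces $c$ odd; so $X_s$ is balanced (with $|V(Q_1)\cap X_s|=|V(Q_2)\cap X_s|=\frac{c-1}{2}$) when $c$ is odd, and skew---hence just-skew---when $c$ is even.

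Finally I would count skew classes with Lemma~\ref{lemma:KNknoofskew}. When $c$ is even, $X_s$ is the unique skew class, and since $n=ck$ is even Lemma~\ref{lemma:KNknoofskew} forces exactly one more $Q_1$-skew than $Q_2$-skew class, so $X_s$ is just-$Q_1$-skew; Corollary~\ref{corollary:KNksizerules2} then yields $|V(Q_1)\cap X_s|=\frac{c}{2}$ and $|V(Q_2)\cap X_s|=\frac{c}{2}-1$, and together with the $k-1$ balanced classes this is the ``$n$ even, $c$ even'' conclusion. When $c$ is odd, $X_s$ is balanced and every $X_i$ with $i\neq s$ is just-skew; writing $a$ and $b$ for the numbers of $Q_1$-skew and $Q_2$-skew classes among them, $a+b=k-1$, and Lemma~\ref{lemma:KNknoofskew} gives $a=b$ if $n$ is odd (so $k$ is odd and $a=b=\frac{k-1}{2}$: the ``$n$ odd'' conclusion) and $a=b+1$ if $n$ is even (so $k$ is even and $a=\frac{k}{2}$, $b=\frac{k}{2}-1$: the ``$n$ even, $c$ odd'' conclusion). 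The main point requiring care is the bookkeeping---checking that the cases are exhaustive, that $n$ is odd exactly when $c$ and $k$ are both odd, and that the hypotheses of the cited results (most importantly that $G$ is not a star) hold in every subcase; there is no genuinely hard step here, since the delicate exchange arguments were already carried out in the preceding lemmas.
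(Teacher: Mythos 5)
Your proposal is correct and takes essentially the same route as the paper: the paper's own proof is a two-sentence appeal to Lemma~\ref{lemma:KNkthreecomp}, Lemma~\ref{lemma:KNkcompbd}, Corollaries~\ref{corollary:KNksizerules} and~\ref{corollary:KNksizerules2}, Lemma~\ref{lemma:KNknoofskew}, and the parity of $n=ck$, which is exactly the bookkeeping you carry out in detail. Your case analysis (the unique colour class meeting $Q_3$, the parity argument forcing balanced versus just-skew, and the skew count from Lemma~\ref{lemma:KNknoofskew}) correctly fills in the steps the paper leaves implicit.
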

\begin{proof}
Since $G$ is regular and $n > k \geq 2$, $G$ is not a star. The statements about the number and order of the components of $G-H$ all follow from Lemma~\ref{lemma:KNkthreecomp} and Lemma~\ref{lemma:KNkcompbd}. Since $n=ck$, when $n$ is odd, $c$ is odd and $k$ is odd. When $n$ is even, at least one of $c$ and $k$ are even. Then from Corollary~\ref{corollary:KNksizerules}, Corollary~\ref{corollary:KNksizerules2} and Lemma~\ref{lemma:KNknoofskew}, the rest of the theorem follows.
\end{proof}

\section{Path decompositions of the Complete Multipartite Graph}
\label{section:KNknonregtree}
We reuse the following notation from the previous section: $G$ is a complete multipartite graph $K_{n_{1}, \dots, n_{k}}$, that is not an independent set (that is, $k \geq 2$), complete graph (that is, $n > k$) or a star. (Recall we have already proven Theorem~\ref{theorem:KNk} for such graphs.) The vertex $v$ of $G$ is chosen from a largest colour class, and $\mathcal{B}$ a canonical line-bramble for $v$. $(H, (Q_1, \dots, Q_p))$ is a good labelling, and by Theorem~\ref{theorem:Hfornonreg} we can assume $p=3$. $X_{1}, \dots, X_{k}$ are the colour classes of $G$ such that $|X_{i}|=n_{i}$.

From the results of the previous section, it is possible to determine the order of a minimum hitting set $H$. However, first we find a path decomposition of $L(G)$ with width expressed in terms of $H$, as this will make things easier. 

Now we define path decomposition for $L(G)$ as follows. Let $T$ be the underlying path. Since $T$ is a path, it makes sense to refer to a bag \emph{left} or \emph{right} of another bag, depending on the relative positions of the corresponding nodes in $T$. If a bag is to the right of another bag and the nodes which index them are adjacent in $T$, then we say it is \emph{directly right}. Similarly define \emph{directly left}. For a vertex $u$ of $G$, let $\deg_{i}(u)$ be the number of edges in $G$ incident to $u$ with the other endpoint in the component $Q_{i}$.

First, label the vertices of $Q_{1}$ by $x_{1}, \dots, x_{|V(Q_{1})|}$ in some order, which we will specify later. Similarly, label the vertices of $Q_{2}$ by $y_{1}, \dots, y_{|V(Q_{2})|}$, again in an order we will later specify. Finally, by Theorem~\ref{theorem:Hfornonreg}, $Q_{3}$ contains a single vertex, which we label $z$.

Then define the following bags:
\begin{itemize*}
\item $\gamma := H = \{uw \in E(G): u,w$ are in different components of $G-H \}$,
%\item for $1 \leq i \leq |V(Q_{1})|$, $$\alpha_{i} := \{ x_{\ell}u, x_{j}w \in E(G) : u \in V(Q_{1}), w \in V(G)-V(Q_{1}), 1 \leq \ell \leq i, i \leq j \leq |V(Q_{1})|\},$$
\item for for $1 \leq i \leq |V(Q_{1})|$, $$\!\!\!\!\!\!\!\!\!\!\!\!\!\!\!\!\alpha_{i} := \{ x_{\ell}u \in E(G) : u \in V(Q_{1}), 1 \leq \ell \leq i\} \cup  \{x_{j}w \in E(G) : w \in V(G) - V(Q_1), i \leq j \leq |V(Q_{1})|\},$$
%\item for $1 \leq i \leq |V(Q_{2})|$, $$\beta_{i} := \{ y_{\ell}u, y_{j}w \in E(G) : u \in V(Q_{2}), w \in V(G)-V(Q_{2}), 1 \leq \ell \leq i, i \leq j \leq |V(Q_{2})|\}.$$
\item for $1 \leq i \leq |V(Q_{2})|$, $$\!\!\!\!\!\!\!\!\!\!\!\!\!\!\!\!\beta_{i} := \{ y_{\ell}u \in E(G) : u \in V(Q_{2}), 1 \leq \ell \leq i\} \cup \{y_{j}w \in E(G) : w \in V(G)-V(Q_{2}), i \leq j \leq |V(Q_{2})|\}.$$
\end{itemize*}

Each bag is indexed by a node of $T$. Left-to-right, the nodes of $T$ index the bags in the following order: $\beta_{|V(Q_{2})|}, \dots, \beta_{1}, \gamma, \alpha_{1}, \dots, \alpha_{|V(Q_{1})|}$. Let $\mathcal{X}$ denote the collection of bags.
We claim this defines a path decomposition $(T,\mathcal{X})$ for $L(G)$, independent of our ordering of $Q_{1}$ and $Q_{2}$.

\begin{lemma}
\label{lemma:Tistd}
Let $G$ be a complete multipartite graph $G:=K_{n_{1}, \dots, n_{k}}$ such that $k \geq 2$, $n>k$ and $G$ is not a star, $v$ a vertex of $G$ chosen from a largest colour class, $\mathcal{B}$ a canonical line-bramble for $v$, and $(H, (Q_1,Q_2, Q_3))$ a good labelling.
Then $(T,\mathcal{X})$ is a path decomposition of $L(G)$, irrespective of the ordering used on $Q_1$ and $Q_2$.
\end{lemma}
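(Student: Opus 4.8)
The plan is to verify the three defining properties of a path decomposition directly from the construction: that $T$ is a path (immediate, by definition of the node ordering), that each edge of $G$ (i.e.\ each vertex of $L(G)$) appears in some bag, that each edge of $L(G)$ appears in some common bag, and that for each edge of $G$ the set of bags containing it is a contiguous interval of $T$ (the subtree/interval condition). The key organizing observation is that every edge $uw$ of $G$ falls into exactly one of three categories according to the partition $V(G) = V(Q_1) \cup V(Q_2) \cup V(Q_3)$: either both endpoints lie in the same component $Q_i$ (an ``internal'' edge of $Q_1$ or $Q_2$; there are no internal edges of $Q_3$ since $|V(Q_3)|=1$), or the endpoints lie in two different components (a ``crossing'' edge, i.e.\ an element of $\gamma = H$). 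I would handle each category in turn.

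First I would pin down, for a typical edge $e = uw$, the exact set of bags containing it. If $e$ is internal to $Q_1$, say $u = x_a$, $w = x_b$ with $a < b$: then $e \in \alpha_i$ iff ($i \ge a$, contributing $e$ as an $x_a u$-type edge with $u = x_b \in V(Q_1)$) --- wait, more carefully, $x_a x_b \in \alpha_i$ via the first set when $a \le i$, and via the second set when $b \ge i$; so $e \in \alpha_i$ exactly for $a \le i \le b$, a contiguous block, and $e$ appears in no $\beta_j$ and not in $\gamma$. Symmetrically for edges internal to $Q_2$ (appearing in a contiguous block of $\beta$'s). If $e = x_a w$ is a crossing edge with $x_a \in V(Q_1)$ and $w \notin V(Q_1)$: then $e \in \gamma$, and $e \in \alpha_i$ (via the second set) exactly for $i \le a$; if moreover $w \in V(Q_2)$, say $w = y_b$, then $e \in \beta_j$ (via the second set, since $x_a \notin V(Q_2)$) exactly for $j \le b$. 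Reading off the left-to-right order $\beta_{|V(Q_2)|}, \dots, \beta_1, \gamma, \alpha_1, \dots, \alpha_{|V(Q_1)|}$, the bags containing $e$ in this last subcase are $\beta_b, \beta_{b-1}, \dots, \beta_1, \gamma, \alpha_1, \dots, \alpha_a$ --- again a contiguous interval of $T$ that includes $\gamma$. The subcase $w = z \in V(Q_3)$ gives the interval $\gamma, \alpha_1, \dots, \alpha_a$, and the subcase of a $Q_2$--$Q_3$ crossing edge gives $\beta_b, \dots, \beta_1, \gamma$. In every case the set of bags is a nonempty contiguous interval, which settles the connected-subtree condition, and also shows every edge of $G$ lies in at least one bag.

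Next I would check the edge condition of $L(G)$: two edges $e, f$ of $G$ sharing a vertex $u$ must co-occur in some bag. The clean way is to note that all edges incident to a fixed vertex appear together in at least one bag. If $u = x_a \in V(Q_1)$, then $\alpha_a$ contains every edge at $x_a$: the edges from $x_a$ into $V(Q_1)$ appear via the first set (with $\ell = a \le a$), and the edges from $x_a$ to $V(G) - V(Q_1)$ appear via the second set (with $j = a \le |V(Q_1)|$ and $j = a \ge a$). Symmetrically, $\beta_b$ contains every edge at $y_b \in V(Q_2)$. Finally every edge incident to $z$ is a crossing edge, hence lies in $\gamma$, so $\gamma$ contains all edges at $z$. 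Thus for any vertex $u$ there is a single bag holding its entire incident-edge set, which gives the $L(G)$-edge condition. Combined with the interval property from the previous paragraph, all four axioms of a path decomposition hold, and none of the reasoning used the specific orderings of $Q_1$ or $Q_2$ --- only that $x_1, \dots, x_{|V(Q_1)|}$ enumerates $V(Q_1)$ and $y_1, \dots, y_{|V(Q_2)|}$ enumerates $V(Q_2)$ --- so the conclusion is order-independent as claimed.

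I do not expect a serious obstacle here; the proof is a careful but routine bookkeeping exercise. The one place to be attentive is the boundary behaviour at the index extremes and at $\gamma$: one must confirm that the interval of bags containing a crossing edge genuinely ``wraps through'' $\gamma$ (rather than splitting into two separate intervals on the $\alpha$ side and the $\beta$ side), which is exactly why $\gamma$ is placed in the middle and why the ``second set'' in the definition of $\alpha_i$ (resp.\ $\beta_i$) is indexed by $j \ge i$ (resp.\ $j \ge i$) rather than $j \le i$. I would make this explicit in the write-up by always listing the bag-interval in left-to-right order as above. A secondary point worth stating is that there are no internal edges of $Q_3$ because Theorem~\ref{theorem:Hfornonreg} forces $|V(Q_3)| = 1$, so the case analysis is genuinely exhaustive.
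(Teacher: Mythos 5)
Your proof is correct in substance and takes the same route as the paper: a direct verification of the path-decomposition axioms by case analysis on the edges of $G$ (internal to $Q_1$, internal to $Q_2$, or crossing), showing each lies in a non-empty contiguous interval of bags, together with the observation that all edges incident to $x_a$, $y_b$, $z$ appear together in $\alpha_a$, $\beta_b$, $\gamma$ respectively. One slip in the internal-edge case: an edge $x_a x_b$ of $Q_1$ with $a<b$ is never in the second set of any $\alpha_i$, since that set requires the other endpoint to lie in $V(G)-V(Q_1)$; so it belongs to $\alpha_i$ for every $i\geq a$, i.e.\ to $\alpha_a,\dots,\alpha_{|V(Q_1)|}$ (as the paper states), not only for $a\leq i\leq b$ --- your first instinct ($i\geq a$) was the right one, and since this larger set is still a contiguous interval the argument is unaffected.
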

\begin{proof}
Consider $uw \in E(G)$. We require that the nodes indexing the bags containing $uw$ induce a non-empty connected subpath of $T$. Firstly, assume that $u$ and $w$ are in different components of $G-H$. If $u=x_{i}$ and $w=y_{j}$, then $uw \in \beta_{j}, \dots, \beta_{1}, \gamma, \alpha_{1}, \dots, \alpha_{i}$, meaning $uw$ is in precisely this sequence of bags. If $u=x_{i}$ and $w=z$, then $uw \in \gamma, \alpha_{1}, \dots, \alpha_{i}$. If $u=y_{j}$ and $w=z$, then $uw \in \beta_{j}, \dots, \beta_{1}, \gamma$.

Secondly, assume that $u$ and $w$ are in the same component of $G-H$, which is either $Q_{1}$ or $Q_{2}$, since by Theorem~\ref{theorem:Hfornonreg}, $|V(Q_{3})|=1$. If $u,w \in V(Q_{1})$, then let $u=x_{i}$ be the vertex of smaller label. Then $uw \in \alpha_{i}, \dots, \alpha_{|V(Q_{1})|}$. If $u,w \in V(Q_{2})$, then similarly let $u=y_{i}$ be the vertex of smaller label. Then $uw \in \beta_{|V(Q_{2})|}, \dots, \beta_{i}$. This shows that the nodes indexing the bags containing $uw$ induce a non-empty connected subpath of $T$.

All that remains is to show that if two edges are incident at a vertex in $G$ (that is, the edges are adjacent in $L(G)$), then there is a bag of $\mathcal{X}$ containing both of them. Now if the shared vertex of the two edges is $x_{i} \in V(Q_{1})$, then by inspection both edges are in $\alpha_{i}$. If the shared vertex is $y_{j} \in V(Q_{2})$, then both edges are in $\beta_{j}$. Finally, if the shared vertex is $z$, then both edges are in $\gamma$.
\end{proof}

Now we determine the width of $(T,\mathcal{X})$, which is one less than the order of the largest bag. To do so, we use a specific labelling of $Q_{1} \cup Q_{2}$. We do this in two different ways, depending on whether $G$ is regular.

In our first ordering, label the vertices $x_{1}, \dots, x_{|V(Q_{1})|}$ in order of non-decreasing size of the colour class containing $x_{i}$, and do the same for $y_{1}, \dots, y_{|V(Q_{2})|}$. We denote this ordering as the \emph{red ordering}.

\begin{lemma}
\label{lemma:KNknralpha}
Let $G,v,\mathcal{B},(H,(Q_1,Q_2,Q_3))$ and $(T,\mathcal{X})$ be as in Lemma~\ref{lemma:Tistd}, but assume the ordering on $Q_1$ and $Q_2$ is the red ordering. Then $|\alpha_{i}| \leq |\alpha_{1}| + n-2$, for all $1 \leq i \leq |V(Q_{1})|$. 
\end{lemma}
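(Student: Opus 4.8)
## Proof proposal for Lemma~\ref{lemma:KNknralpha}

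The plan is to track how the bag size $|\alpha_i|$ changes as $i$ increases by $1$, and to show that each step either decreases the size or increases it by a controlled amount, with the total increase from $\alpha_1$ bounded by $n-2$. Recall that $\alpha_i$ consists of two parts: the ``already-processed internal edges'' $\{x_\ell u : u \in V(Q_1),\, 1 \le \ell \le i\}$ and the ``not-yet-processed boundary edges'' $\{x_j w : w \in V(G)-V(Q_1),\, i \le j \le |V(Q_1)|\}$. Moving from $\alpha_i$ to $\alpha_{i+1}$, we add the internal edges from $x_{i+1}$ to $\{x_1,\dots,x_i\}$ (there are at most $\deg_1(x_{i+1})$ of these, but more precisely the number of neighbours of $x_{i+1}$ among $x_1,\dots,x_i$), and we remove the boundary edges at $x_{i+1}$, of which there are $\deg(x_{i+1}) - \deg_1(x_{i+1}) = \deg_2(x_{i+1}) + \deg_3(x_{i+1})$. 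So $|\alpha_{i+1}| - |\alpha_i| = (\text{neighbours of }x_{i+1}\text{ among }x_1,\dots,x_i) - \deg_2(x_{i+1}) - \deg_3(x_{i+1})$.

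First I would observe that $|\alpha_1|$ is essentially ``all boundary edges plus the internal edges at $x_1$'', and more usefully, that the quantity $|\alpha_i| - |\alpha_1|$ telescopes: it equals $\sum_{\ell=2}^{i}\bigl[(\text{nbrs of }x_\ell\text{ among }x_1,\dots,x_{\ell-1}) - \deg_2(x_\ell) - \deg_3(x_\ell)\bigr]$ plus a correction for the internal edges at $x_1$ that get counted. I would then bound the positive contributions. The key point is that $x_\ell$'s neighbours among $x_1,\dots,x_{\ell-1}$ are all the vertices of those that lie in a different colour class from $x_\ell$; since the red ordering sorts $Q_1$ by non-decreasing colour-class size, when $x_\ell$ is being added, the vertices before it from its own colour class have already been processed, which is exactly what makes the internal-edge count small relative to the boundary loss. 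Concretely, I expect to show that once we are past the first vertex of $x_\ell$'s colour class, each further vertex of the same class contributes net non-positively (it adds internal edges only to vertices of other colour classes seen so far, but loses at least as many boundary edges), so the running maximum of $|\alpha_i|$ is attained either at $i=1$ or just after finishing some colour class, and in the latter case it exceeds $|\alpha_1|$ by at most the number of cross edges within $Q_1$ incident to a bounded set — which I would bound crudely by $n-2$.

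The cleanest route is probably: show directly that $|\alpha_i| \le |\alpha_1| + n - 2$ by writing $|\alpha_i| = |\alpha_1| - (\text{boundary edges at }x_2,\dots,x_i) + (\text{internal }Q_1\text{ edges with both endpoints in }\{x_1,\dots,x_i\}\text{ but not both}=x_1)$, i.e. $|\alpha_i| \le |\alpha_1| + |E(Q_1[\{x_2,\dots,x_i\}])| + (\text{edges from }x_1\text{ into }\{x_2,\dots,x_i\})$. Since $Q_1[\{x_2,\dots,x_i\}] \cup \{x_1\}$ is a subgraph of the complete multipartite graph $Q_1$ on at most $|V(Q_1)| \le \frac{n}{2}$ vertices, and any edge we add is a crossover or internal edge, I would argue the added edges number at most $|V(Q_1)| + |V(Q_1)| - 3$ or similar via a spanning-structure count — but the tight bound $n-2$ suggests one should instead compare against the edges leaving $\{x_1,\dots,x_i\}$ and use that every vertex of $Q_1$ has at least one neighbour outside $Q_1$ (since $G$ is not a star and $Q_1 \neq V(G)$, each $x_j$ has $\deg_2(x_j) + \deg_3(x_j) \ge 1$), so each newly-internalised vertex ``pays'' at least one removed boundary edge. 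The main obstacle I anticipate is getting the constant exactly $n-2$ rather than something like $n-1$ or $\frac{3n}{2}$: this will hinge on carefully using the red ordering (so the single ``expensive'' colour class is the largest one, and within it the net change is non-positive after its first vertex) together with the fact that $x_1$ lies in a smallest colour class and hence has the most boundary edges, which is what anchors $|\alpha_1|$ high enough that the later bags do not overshoot by more than $n-2$.
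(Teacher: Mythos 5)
Your proposal sets up the right bookkeeping (telescoping the increments $|\alpha_i|-|\alpha_{i-1}|$, where each step gains the $Q_1$-internal edges newly introduced at one vertex and loses the boundary edges at another), but it never closes the argument, and you say as much yourself: you identify ``getting the constant exactly $n-2$'' as the main obstacle and offer three partial strategies without carrying any of them through. The claims you lean on are either unproven or too weak: the assertion that after the first vertex of a colour class each further vertex of that class contributes net non-positively is not established (and is not how the bound actually works); the ``cleanest route'' inequality $|\alpha_i| \le |\alpha_1| + |E(Q_1[\{x_2,\dots,x_i\}])| + (\text{edges from }x_1)$ simply discards the negative boundary terms and cannot give anything near $n-2$; and ``every vertex of $Q_1$ has at least one neighbour outside $Q_1$'' is far too crude, since the boundary loss per step needs to be comparable to $|V(Q_2)|$, not to $1$.

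The missing idea is a \emph{uniform} per-step bound: $|\alpha_i| - |\alpha_{i-1}| \le 2$ for every $i$, regardless of whether $x_{i-1}$ and $x_i$ share a colour class. Writing $x_{i-1}\in X_s$, $x_i\in X_t$, the increment is at most $\deg_1(x_i) - (\deg_G(x_{i-1}) - \deg_1(x_{i-1})) = 2|V(Q_1)| + n_s - n - |V(Q_1)\cap X_s| - |V(Q_1)\cap X_t|$. This is where the structural result of the previous section is indispensable: Theorem~\ref{theorem:Hfornonreg} guarantees $|V(Q_1)\cap X_s|, |V(Q_1)\cap X_t| \ge \lceil\frac{n_\cdot-2}{2}\rceil$, and the red ordering gives $n_t \ge n_s$, so $|V(Q_1)\cap X_s| + |V(Q_1)\cap X_t| \ge n_s - 2$; combined with $|V(Q_1)| \le \frac{n}{2}$ (Lemma~\ref{lemma:KNlegithit}) the increment is at most $2$. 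Since there are at most $|V(Q_1)|-1 \le \frac{n}{2}-1$ steps, the total increase is at most $n-2$ exactly. Your proposal never invokes the near-balanced split of each colour class across $Q_1$ and $Q_2$, and without that ingredient the increments cannot be controlled; the red ordering alone (sorting by colour-class size) does not suffice. There is also a small indexing slip in your increment formula (the boundary edges lost in passing from $\alpha_i$ to $\alpha_{i+1}$ are those at $x_i$, not at $x_{i+1}$), though that is repairable.
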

\begin{proof}
We will show that $|\alpha_{i}| \leq |\alpha_{i-1}|+2$ for all $i$. This implies that $|\alpha_{i}| \leq |\alpha_{1}| + 2(i-1)$. Since $i \leq |V(Q_{1})|$ and $|V(Q_{1})| \leq \frac{n}{2}$ by Lemma~\ref{lemma:KNlegithit}, this is sufficient.
\begin{align*}
\alpha_{i} = &\{ x_{\ell}u, x_{j}w \in E(G) : u \in V(Q_{1}), w \in V(G)-V(Q_{1}), 1 \leq \ell \leq i, i \leq j \leq |V(Q_{1})|\} \\
= &\{x_{\ell}u \in E(G) : u \in V(Q_{1}), 1 \leq \ell \leq i\} \cup \{x_{j}w \in E(G) : w \in V(G)-V(Q_{1}), i \leq j \leq |V(Q_{1})|\}.
\end{align*} This is a disjoint union. Let $X_{s},X_{t}$ be the colour classes such that $x_{i-1} \in X_{s}$ and $x_{i} \in X_{t}$, and note that it is possible $s=t$. Then 
\begin{align*}
|\alpha_{i}| - |\alpha_{i-1}| = &|\{x_{\ell}u \in E(G) : u \in V(Q_{1}), 1 \leq \ell \leq i\}| \\
&- |\{x_{\ell}u \in E(G) : u \in V(Q_{1}), 1 \leq \ell \leq i-1\}| \\
&+ |\{x_{j}w \in E(G) : w \in V(G)-V(Q_{1}), i \leq j \leq |V(Q_{1})|\}| \\
&- |\{x_{j}w \in E(G) : w \in V(G)-V(Q_{1}), i-1 \leq j \leq |V(Q_{1})|\}| \\
\leq &\deg_{1}(x_{i}) -|\{x_{i-1}w \in E(G): w \in V(G)-V(Q_{1})\}| \\
= &\deg_{1}(x_{i})-(\deg_{G}(x_{i-1})-\deg_{1}(x_{i-1})) \\
= &\deg_{1}(x_{i})-(n - n_{s} - \deg_{1}(x_{i-1})) \\
= &|V(Q_{1})|-|V(Q_{1} \cap X_{t})|-(n - n_{s} - |V(Q_{1})|+|V(Q_{1} \cap X_{s})|) \\
= &2|V(Q_{1})|+n_{s}-|V(Q_{1} \cap X_{t})|-n-|V(Q_{1} \cap X_{s})|.
\end{align*}

Assume for the sake of contradiction that $|\alpha_{i}| - |\alpha_{i-1}| > 2$. Then:
$$2|V(Q_{1})| + n_{s} > n  + |V(Q_{1}) \cap X_{s}| + |V(Q_{1}) \cap X_{t}| +2.$$
By the ordering of the vertices in $Q_{1}$, $n_{t} \geq n_{s}$. Then by Theorem~\ref{theorem:Hfornonreg}, $$|V(Q_{1}) \cap X_{s}| + |V(Q_{1}) \cap X_{t}| \geq \frac{n_{s}-2}{2} + \frac{n_{t}-2}{2} \geq n_{s} - 2.$$ Hence $2|V(Q_{1})| + n_{s} > n+n_{s}-2+2$; that is, $2|V(Q_{1})| > n$. But $|V(Q_{1})|>\frac{n}{2}$ contradicts Lemma~\ref{lemma:KNlegithit}.
\end{proof}

By symmetry we have:
\begin{lemma}
\label{lemma:KNknrbeta}
Let $G,v,\mathcal{B},(H,(Q_1,Q_2,Q_3))$ and $(T,\mathcal{X})$ be as in Lemma~\ref{lemma:Tistd}, but assume the ordering on $Q_1$ and $Q_2$ is the red ordering. Then$|\beta_{i}| \leq |\beta_{1}| + n-2$, for all $1 \leq i \leq |V(Q_{2})|$. 
\end{lemma}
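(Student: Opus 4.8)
The plan is to mirror the proof of Lemma~\ref{lemma:KNknralpha} with the roles of $Q_1$ and $Q_2$ interchanged; the argument there used nothing special about $Q_1$ being the largest component, only the bound $|V(Q_1)| \le \frac{n}{2}$ (from Lemma~\ref{lemma:KNlegithit}) together with the colour-class estimates of Theorem~\ref{theorem:Hfornonreg}, both of which hold verbatim for $Q_2$. First I would show $|\beta_i| \le |\beta_{i-1}| + 2$ for every $2 \le i \le |V(Q_2)|$; telescoping gives $|\beta_i| \le |\beta_1| + 2(i-1)$, and since $i \le |V(Q_2)| \le \frac{n}{2}$ this yields the claimed $|\beta_i| \le |\beta_1| + n - 2$.

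For the inductive step, write $\beta_i$ as the disjoint union of the ``internal'' edges $\{y_\ell u \in E(G) : u \in V(Q_2),\ 1 \le \ell \le i\}$ and the ``crossing'' edges $\{y_j w \in E(G) : w \in V(G) - V(Q_2),\ i \le j \le |V(Q_2)|\}$, and compare with the same decomposition of $\beta_{i-1}$. Passing from $i-1$ to $i$ adds at most $\deg_2(y_i)$ internal edges and deletes exactly the $\deg_G(y_{i-1}) - \deg_2(y_{i-1})$ crossing edges incident to $y_{i-1}$. Letting $y_{i-1} \in X_s$ and $y_i \in X_t$, and using $\deg_G(y_{i-1}) = n - n_s$, $\deg_2(y_{i-1}) = |V(Q_2)| - |V(Q_2) \cap X_s|$ and $\deg_2(y_i) = |V(Q_2)| - |V(Q_2) \cap X_t|$, this gives
$$|\beta_i| - |\beta_{i-1}| \le 2|V(Q_2)| + n_s - |V(Q_2) \cap X_s| - |V(Q_2) \cap X_t| - n.$$
If this were greater than $2$, then the red ordering (so $n_t \ge n_s$) together with Theorem~\ref{theorem:Hfornonreg} (so $|V(Q_2) \cap X_s| + |V(Q_2) \cap X_t| \ge \frac{n_s-2}{2} + \frac{n_t-2}{2} \ge n_s - 2$) would force $2|V(Q_2)| > n$, contradicting Lemma~\ref{lemma:KNlegithit}.

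There is essentially no obstacle here: the only point worth checking is that the partition of $\beta_i$ into internal and crossing edges is genuinely parallel to that of $\alpha_i$, which holds because, by Theorem~\ref{theorem:Hfornonreg}, the only vertex outside $Q_1 \cup Q_2$ is the single vertex $z$ of $Q_3$, so crossing edges of $Q_2$ behave exactly like crossing edges of $Q_1$. Hence one could simply cite Lemma~\ref{lemma:KNknralpha} applied to the labelling with $Q_1$ and $Q_2$ swapped, but since the components of a good labelling are recorded in non-increasing order of size I prefer to reproduce the one-line computation above, so that the inequality $|V(Q_2)| \le \frac{n}{2}$ is invoked directly from Lemma~\ref{lemma:KNlegithit} rather than through a relabelling.
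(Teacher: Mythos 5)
Your proposal is correct and matches the paper's intent exactly: the paper dispatches this lemma with the single phrase ``By symmetry'' from Lemma~\ref{lemma:KNknralpha}, and your write-up is precisely that symmetric argument carried out in full, with the same decomposition into internal and crossing edges, the same telescoping of $|\beta_i| \leq |\beta_{i-1}|+2$, and the same appeal to Lemma~\ref{lemma:KNlegithit} and Theorem~\ref{theorem:Hfornonreg}. No gaps.
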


\begin{lemma}
\label{lemma:KNknrmaxbag}
Let $G,v,\mathcal{B},(H,(Q_1,Q_2,Q_3))$ and $(T,\mathcal{X})$ be as in Lemma~\ref{lemma:Tistd}. The maximum bag size of $(T,\mathcal{X})$, using the red ordering, is at most $|H| + 2n-2$.
\end{lemma}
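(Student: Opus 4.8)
The plan is to bound the size of each of the three kinds of bag --- $\gamma$, the bags $\alpha_i$, and the bags $\beta_i$ --- separately, using Lemma~\ref{lemma:KNknralpha} and Lemma~\ref{lemma:KNknrbeta} to reduce everything to the sizes of $\alpha_1$ and $\beta_1$, and then giving a crude estimate for those two. The bag $\gamma$ equals $H$, so $|\gamma| = |H| \le |H| + 2n - 2$ trivially.

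Next I would estimate $|\alpha_1|$. By definition (taking $i = 1$) the set $\alpha_1$ is the disjoint union of the edges of $G$ lying inside $Q_1$ that are incident to $x_1$, together with the set of \emph{all} edges of $G$ having exactly one endpoint in $Q_1$. For the first part: since $G$ is complete multipartite, the number of neighbours of $x_1$ inside $Q_1$ is $|V(Q_1)| - |V(Q_1) \cap X_s|$, where $X_s$ is the colour class of $x_1$; this is at most $|V(Q_1)| - 1$, and $|V(Q_1)| \le \tfrac{n}{2}$ by Lemma~\ref{lemma:KNlegithit}. For the second part: because $Q_1$ is a connected component of $G - H$, every edge of $G$ with exactly one endpoint in $Q_1$ must belong to $H$ (otherwise it would survive in $G-H$ and merge $Q_1$ with another component), so there are at most $|H|$ such edges. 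Hence $|\alpha_1| \le |H| + |V(Q_1)| - 1 \le |H| + n$. Combining this with Lemma~\ref{lemma:KNknralpha} gives $|\alpha_i| \le |\alpha_1| + n - 2 \le |H| + 2n - 2$ for every $i$. The argument for $\beta_1$ is word-for-word the same with $Q_2$, $y_1$ in place of $Q_1$, $x_1$, using $|V(Q_2)| \le |V(Q_1)| \le \tfrac{n}{2}$; together with Lemma~\ref{lemma:KNknrbeta} this yields $|\beta_i| \le |H| + 2n - 2$ for every $i$. Since every bag of $\mathcal{X}$ is $\gamma$, some $\alpha_i$, or some $\beta_i$, the maximum bag size is at most $|H| + 2n - 2$, as claimed.

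There is no real obstacle in this lemma: the one point worth isolating is the observation that the ``outgoing'' edges collected in $\alpha_1$ (and in $\beta_1$) form a subset of the edge cut $H$, which is exactly what produces the $|H|$ term; the remaining contribution is a single vertex's worth of internal edges, which is comfortably absorbed into the $2n - 2$ slack once Lemma~\ref{lemma:KNknralpha} and Lemma~\ref{lemma:KNknrbeta} are applied.
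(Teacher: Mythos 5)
Your proof is correct and follows essentially the same route as the paper: both reduce to bounding $|\alpha_1|$ and $|\beta_1|$ via Lemmas~\ref{lemma:KNknralpha} and~\ref{lemma:KNknrbeta}, and both observe that $\alpha_1$ (resp.\ $\beta_1$) consists of at most $|H|$ cut edges plus at most $n$ edges incident to $x_1$ (resp.\ $y_1$), giving $|\alpha_1|,|\beta_1|\le|H|+n$. The paper phrases this as ``the edges in $\alpha_1-\gamma$ are all adjacent to $x_1$,'' which is the same observation you isolate at the end.
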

\begin{proof}
By Lemma~\ref{lemma:KNknralpha} and Lemma~\ref{lemma:KNknrbeta}, the maximum size of a bag right of $\gamma$ is at most $|\alpha_{1}|+n-2$, and left of $\gamma$ it is $|\beta_{1}|+n-2$. By inspection, the edges in $\alpha_{1}-\gamma$ are all adjacent to $x_{1}$. Hence there are at most $n$ of them. Thus $|\alpha_{1}| \leq |\gamma|+n$. Similarly $|\beta_{1}| \leq |\gamma|+n$. Since $\gamma=H$, this is sufficient.
\end{proof}

Given this, we can determine an upper bound on $\pw(L(G))$.

\begin{theorem}
\label{theorem:KNktwlb}
Let $G$ be a complete multipartite graph $G:=K_{n_{1}, \dots, n_{k}}$ where $k \geq 2$. Then $\pw(G) \leq \frac{1}{2}\left(\sum_{1 \leq i < j \leq k}n_{i}n_{j}\right) + \frac{1}{4}k^2 + \frac{1}{2}kn - \frac{1}{4}k + \frac{5}{2}n -4$
\end{theorem}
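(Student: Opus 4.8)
The plan is to bound $\pw(G)$ — which, following the convention already used in Theorem~\ref{theorem:KNk} (where $\pw(K_{n_1,\dots,n_k})$ likewise denotes the pathwidth of the line graph), abbreviates $\pw(L(G))$ — by simply reading off the width of the explicit path decomposition $(T,\mathcal{X})$ of $L(G)$ constructed above. Since the width of a path decomposition is one less than the size of its largest bag, it suffices to bound the maximum bag size of $(T,\mathcal{X})$ and subtract $1$. All the hard analysis needed for this has already been carried out in the preceding lemmas, so the proof is essentially an assembly step together with a short case split.

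First I would reduce to the generic case. The theorem is asserted for every $k\geq 2$, but the construction of $(T,\mathcal{X})$ (Lemma~\ref{lemma:Tistd}) and the bag estimate (Lemma~\ref{lemma:KNknrmaxbag}) require that $G$ be neither a star nor a complete graph (that is, $n>k$). So I would first dispose of the two degenerate cases by direct inspection: if $G$ is a star $K_{1,n-1}$ then $L(G)\cong K_{n-1}$, so $\pw(G)=n-2$; and if $n=k$ then $G=K_{n}$ and Theorem~\ref{theorem:KN} evaluates $\pw(L(K_{n}))$ exactly. In each case the claimed right-hand side grows like $\tfrac12\sum_{i<j}n_in_j$, which is quadratic in $n$, whereas these pathwidths are only linear in $n$; so the inequality holds with ample slack. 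Having cleared these, I assume $G$ is not a star and $n>k$, so that Lemma~\ref{lemma:Tistd} guarantees $(T,\mathcal{X})$ is a genuine path decomposition of $L(G)$, and I adopt the red ordering on $Q_1$ and $Q_2$.

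The core step is then to combine the two estimates already in hand. By Lemma~\ref{lemma:KNknrmaxbag}, under the red ordering the maximum bag size of $(T,\mathcal{X})$ is at most $|H|+2n-2$, so $\pw(G)\leq |H|+2n-3$. By Corollary~\ref{corollary:ubonH}, $|H|\leq \tfrac{1}{2}\big(\sum_{1\leq i<j\leq k}n_in_j\big)+\tfrac{1}{2}n(k+1)+\tfrac{1}{4}k(k-1)-1$. Substituting and collecting terms — using $\tfrac12 n(k+1)+2n=\tfrac12 kn+\tfrac52 n$, $\tfrac14 k(k-1)=\tfrac14 k^2-\tfrac14 k$, and $-1-3=-4$ — gives exactly
$$\pw(G)\leq \frac{1}{2}\Big(\sum_{1\leq i<j\leq k}n_in_j\Big)+\frac{1}{4}k^2+\frac{1}{2}kn-\frac{1}{4}k+\frac{5}{2}n-4,$$
which is the claimed bound.

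The genuinely substantive content sits in the maximum-bag estimate $|H|+2n-2$ of Lemma~\ref{lemma:KNknrmaxbag} and in the hitting-set bound of Corollary~\ref{corollary:ubonH}; both are already proved, so what remains here is bookkeeping. The one point requiring care is that both of those results are stated only under the genericity hypotheses (not a star, $n>k$), so the degenerate cases genuinely must be handled separately rather than absorbed into the main computation — and they are, since the target bound is quadratic in $n$ while the degenerate pathwidths are linear. Once that case split and the routine arithmetic are verified, Theorem~\ref{theorem:KNktwlb} follows.
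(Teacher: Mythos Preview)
Your proposal is correct and follows essentially the same route as the paper: reduce to the generic case (handling the star via $L(K_{1,n-1})\cong K_{n-1}$ and the case $n=k$ via Theorem~\ref{theorem:KN}), then combine Lemma~\ref{lemma:KNknrmaxbag} with Corollary~\ref{corollary:ubonH} to obtain $\pw(L(G))\leq |H|+2n-3$ and substitute. The only difference is that you spell out the arithmetic and the degenerate-case checks more explicitly than the paper does.
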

\begin{proof}
If $G,v,\mathcal{B},(H,(Q_1,Q_2,Q_3))$ and $(T,\mathcal{X})$ are as in Lemma~\ref{lemma:Tistd}, then we have a path decomposition of width at most $|H| + 2n - 3$ by Lemma~\ref{lemma:KNknrmaxbag}. (Note as $k \geq 2$, it follows $n \geq 2$ and so $2n-3$ is positive.) Then our result follows from Corollary~\ref{corollary:ubonH}. In the remaining cases, $G$ is either a complete graph or a star, and this result follows by Theorem~\ref{theorem:KN} or inspection, respectively.
\end{proof}

Thus Theorem~\ref{theorem:KNk} follows from Theorem~\ref{theorem:lbonH} and Theorem~\ref{theorem:KNktwlb}.
%Given this, we now determine $|H|$. %%%REFCHANGE
%\begin{lemma}
%$|H| = \frac{1}{2}\left(\sum\limits_{1 \leq i < j \leq k} n_{i}n_{j}\right) + O(n)$
%\end{lemma}
%\begin{proof}
%$|H|$ equals the number of edges between $Q_{1}$ and $Q_{2}$, plus the number of edges between $Q_{3}$ and $Q_{1} \cup Q_{2}$. First we count the edges between $Q_{1}$ and $Q_{2}$. Since, by Theorem~\ref{theorem:Hfornonreg}, $\frac{n_{i}}{2} - 1 \leq |V(Q_{1}) \cap X_{i}|,|V(Q_{2}) \cap X_{i}| \leq \frac{n_{i}+1}{2}$, this is 
%\begin{align*}
%\sum_{i \neq j} |V(Q_{1}) \cap X_{i}||V(Q_{2}) \cap X_{j}| \geq &\sum_{i \neq j} \left(\frac{n_{i}}{2}-1\right)\left(\frac{n_{j}}{2}-1\right) \\
%= &\frac{1}{4}\left(\sum_{i \neq j} n_{i}n_{j}\right) - O(n) \\
%= &\frac{1}{2}\left(\sum_{1 \leq i < j \leq k}n_{i}n_{j}\right)-O(n).
%\end{align*} 
%Similarly, we can show that 
%$\sum_{i \neq j} |V(Q_{1}) \cap X_{i}||V(Q_{2}) \cap X_{j}| \leq \frac{1}{2}\left(\sum_{1 \leq i < j \leq k}n_{i}n_{j}\right)+O(n)$.
%The edges between $Q_{3}$ and $Q_{1} \cup Q_{2}$ are simply the edges incident to $z$, of which there are at most $n-1$. The result follows.
%\end{proof}
%
%Recall that $\tw(L(G)) = \bn(L(G))-1 \geq |H|-1$ by Lemma~\ref{lemma:KNlbisb} and the Treewidth Duality Theorem. Also, by Lemma~\ref{lemma:KNltree} and Lemma~\ref{lemma:KNknrmaxbag}, $\tw(L(G)) \leq |H|+O(n)$.  Together, these results establish the remaining cases of Theorem~\ref{theorem:KNk}.

When $G$ is regular, that is, $n_{1} = \dots = n_{k}$, we can get a more accurate bound on the treewidth and pathwidth. Define $c:=n_{1}$ to be the size of each colour class. We need a different ordering of the vertices $x_{1}, \dots, x_{|Q_{1}|}$ and $y_{1}, \dots, y_{|Q_{2}|}$ to obtain our result. In order to do this, we recall the notion of a skew colour class, as defined in Section~\ref{section:KNk}, and the associated results. 
First consider a colour class $X_{i}$ that does not intersect $Q_{3}$. If $X_{i}$ is balanced, then say every vertex of $X_{i}$ is \emph{Type 1}. If $X_{i}$ is $Q_{1}$-skew, then each vertex in $Q_{1} \cap X_{i}$ is \emph{Type 1} and each vertex in $Q_{2} \cap X_{i}$ is \emph{Type 2}. If $X_{i}$ is $Q_{2}$-skew, then each vertex in $Q_{1} \cap X_{i}$ is \emph{Type 2} and each vertex in $Q_{2} \cap X_{i}$ is \emph{Type 1}. Finally, each vertex in the remaining colour class (that does intersect $Q_{3}$) is \emph{Type 3}. Thus each vertex of $V(G)-z$ is either Type 1, 2 or 3.
Label the vertices of $Q_{1}$ in order $x_{1}, \dots, x_{|V(Q_{1})|}$ by first labelling Type 1 vertices, then Type 2 vertices, and finally Type 3 vertices. Do the same for $y_{1}, \dots, y_{|V(Q_{2})|}$. We denote this ordering as the \emph{blue ordering}.

\begin{lemma}
\label{lemma:RTDgoodtype}
Let $G$ be a complete $k$-partite graph with $n > k$, $v$ a vertex of $G$, $\mathcal{B}$ a canonical line-bramble for $v$ and $(H,(Q_1,Q_2,Q_3))$ a good labelling. If $k \geq 3$, then $Q_{1}$ contains at least two Type 1 vertices, and $Q_{2}$ contains at least one Type 1 vertex. If $k=2$ and $c \geq 3$, then $Q_{1}$ contains at least two Type 1 vertices, and $Q_{2}$ contains at least one Type 1 or Type 2 vertex.
\end{lemma}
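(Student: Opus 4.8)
The plan is to extract the relevant vertex counts directly from Theorem~\ref{theorem:Hforreg} and then verify the three claimed inequalities by a short case analysis on parities. Since $G=K_{c,\dots,c}$ is regular with $n=ck>k$ we have $c\ge 2$, and since also $k\ge 2$, $G$ is neither a complete graph nor a star, so Theorem~\ref{theorem:Hforreg} applies: $p=3$, the set $Q_{3}=\{z\}$ is a single vertex lying in exactly one colour class (the Type~3 class), and the theorem pins down $|V(Q_{1})\cap X_{i}|$ and $|V(Q_{2})\cap X_{i}|$ for every colour class $X_{i}$ in each of the three regimes: $n$ odd; $n$ even with $c$ odd; and $c$ even.

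First I would translate Theorem~\ref{theorem:Hforreg} into Type~1 and Type~2 counts using the definitions immediately preceding the lemma: a non-$Q_{3}$ colour class that is balanced contributes only Type~1 vertices, while a skew class contributes its larger side as Type~1 and its smaller side as Type~2. This gives, for $Q_{1}$: $\tfrac{k-1}{2}$ classes with $\tfrac{c+1}{2}$ Type~1 vertices each when $n$ is odd; $\tfrac{k}{2}$ classes with $\tfrac{c+1}{2}$ Type~1 vertices each when $n$ is even and $c$ is odd; and $k-1$ balanced classes with $\tfrac{c}{2}$ Type~1 vertices each when $c$ is even. For $Q_{2}$ the Type~1 counts are the same, except that $\tfrac{k}{2}$ is replaced by $\tfrac{k}{2}-1$ in the $n$ even, $c$ odd regime; and in that regime $Q_{2}$ additionally contains $\tfrac{c-1}{2}$ Type~2 vertices from each of the $\tfrac{k}{2}$ classes that are $Q_{1}$-skew.

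Then I would simply check the thresholds. The key point is that $n>k$ (i.e.\ $c\ge 2$) together with the forced parity sharpens $c$ to $c\ge 3$ whenever $c$ is odd, while the hypothesis supplies either $k\ge 3$ or ($k=2$ and $c\ge 3$, which forces $c\ge 4$ when $c$ is in addition even). Under these constraints each of the displayed products for $Q_{1}$ is at least $2$, and each Type~1 product for $Q_{2}$ is at least $1$ --- the sole exception being $k=2$ with $n$ even and $c$ odd, where the $Q_{2}$ Type~1 count is $0$; but there the $\tfrac{c-1}{2}\ge 1$ Type~2 vertices of $Q_{2}$ take over, which is precisely why the $k=2$ conclusion only demands a Type~1 \emph{or} Type~2 vertex in $Q_{2}$.

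I expect the only real obstacle to be bookkeeping: keeping the three parity regimes and the $k=2$ versus $k\ge 3$ split straight, and checking that the tight boundary instances (small $c$ with $k=2$ or $k=3$) are exactly the ones ruled in or out by $n>k$ and the stated hypotheses. No idea beyond Theorem~\ref{theorem:Hforreg} and the Type definitions should be required.
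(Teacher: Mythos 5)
Your proposal is correct and follows essentially the same route as the paper: both read the skew/balanced class counts and the per-class intersection sizes directly off Theorem~\ref{theorem:Hforreg} and then verify the required Type~1/Type~2 thresholds by a case analysis (the paper splits on $k\in\{2,3,4,\ge 5\}$ with parity sub-cases, you split on the three parity regimes — the same bookkeeping either way). Your identification of the one genuine boundary case, $k=2$ with $c$ odd where $Q_2$ has no Type~1 vertices and one must fall back on its $\frac{c-1}{2}\ge 1$ Type~2 vertices, matches the paper exactly.
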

\begin{proof}
If $X_{i}$ is a colour class that does not intersect $Q_{3}$, then it intersects both of $Q_{1}$ and $Q_{2}$---if not, then by Lemma~\ref{lemma:KNkskewisjust}, $|X_{i}|=1$ and $G$ is the complete graph. Since we are trying to find Type 1 and Type 2 vertices, from now on we only consider colour classes that do not intersect $Q_{3}$. If $k \geq 5$, then there are at least four colour classes that do not intersect $Q_{3}$. From Theorem~\ref{theorem:Hforreg}, there are either at least two $Q_{1}$-skew and $Q_{2}$-skew colour classes, or at least four balanced colour classes. Even if each such colour class intersects each of $Q_{1}$ and $Q_{2}$ only once, there are still enough colour classes of the correct skew to get all our required Type 1 vertices. Similarly, if $k=4$ and $c$ is odd, then there are two $Q_{1}$-skew colour classes and one $Q_{2}$-skew colour class, and if $k=4$ and $c$ is even, there are three balanced colour classes. This is again sufficient.

If $k=3$, then by Theorem~\ref{theorem:Hforreg} again, there are enough $Q_{2}$-skew or balanced colour classes to ensure that $Q_{2}$ has at least one Type 1 vertex. However, if $n$ is odd, there is only one $Q_{1}$-skew colour class. In this case, $c$ is odd, and so $c \geq 3$. Thus that colour class contains at least two vertices in $Q_{1}$. Thus $Q_{1}$ has two Type 1 vertices.

Now assume $k=2$ and $c \geq 3$. If $c$ is odd, there is one $Q_{1}$-skew colour class, again by Theorem~\ref{theorem:Hforreg}. This colour class contains at least two vertices in $Q_{1}$ and one in $Q_{2}$, which satisfies our requirement, now that $Q_{2}$ only requires a Type 2 vertex. If $c$ is even, then there is one balanced colour class. $c \geq 3$, so as it is even, $c \geq 4$ and each component contains two vertices from this colour class. This is sufficient.
\end{proof}

The following lemma strengthens Lemma~\ref{lemma:KNknralpha} for the case when $G$ is regular.

\begin{lemma}
\label{lemma:KNkregalpha}
Let $G$ be a complete $k$-partite graph with $n > k$, $v$ a vertex of $G$, $\mathcal{B}$ a canonical line-bramble for $v$ and $(H,(Q_1,Q_2,Q_3))$ a good labelling. Let $(T,\mathcal{X})$ be our path decomposition where $Q_1$ and $Q_2$ are ordered by the blue ordering.
If $k \geq 3$ or $c \geq 3$, then $|\alpha_{1}| \geq |\alpha_{2}| \geq \dots \geq |\alpha_{|V(Q_{1})|}|$.
\end{lemma}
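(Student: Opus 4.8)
The plan is to mimic the argument of Lemma~\ref{lemma:KNknralpha}, but to show the stronger conclusion that $|\alpha_i| - |\alpha_{i-1}| \leq 0$ for every $i$ in the range $2 \leq i \leq |V(Q_1)|$, rather than merely $\leq 2$. As in that lemma, I would start from the exact expression
$$|\alpha_{i}| - |\alpha_{i-1}| = 2|V(Q_{1})| + n_{s} - |V(Q_{1}) \cap X_{t}| - n - |V(Q_{1}) \cap X_{s}|,$$
where $x_{i-1} \in X_s$ and $x_i \in X_t$. In the regular case all colour classes have order $c$, so $n_s = c$ and $n = ck$, and this becomes $2|V(Q_1)| + c - ck - |V(Q_1) \cap X_t| - |V(Q_1) \cap X_s|$. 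So the claim $|\alpha_i| \leq |\alpha_{i-1}|$ is equivalent to
$$|V(Q_1) \cap X_s| + |V(Q_1) \cap X_t| \;\geq\; 2|V(Q_1)| + c - ck.$$

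The work is then a case analysis on the parity of $n$ and on the Types of $x_{i-1}$ and $x_i$, using Theorem~\ref{theorem:Hforreg} to pin down $|V(Q_1) \cap X_i|$ for each colour class and using the blue ordering to control which Types can be adjacent in the sequence $x_1, \dots, x_{|V(Q_1)|}$. Recall that in the blue ordering the Type~1 vertices come first, then the Type~2 vertices, then the single block of Type~3 vertices; so the pair $(x_{i-1}, x_i)$ is either (1,1), (1,2), (2,2), (2,3) or (3,3). In each case Theorem~\ref{theorem:Hforreg} tells us the sizes $|V(Q_1) \cap X_s|$ and $|V(Q_1) \cap X_t|$ exactly (for instance, a $Q_1$-skew class contributes $\tfrac{c+1}{2}$ to $Q_1$, a $Q_2$-skew class contributes $\tfrac{c-1}{2}$, a balanced class not meeting $Q_3$ contributes $\tfrac c2$ when $c$ is even or $\tfrac{c-1}{2}$/$\tfrac{c+1}{2}$ as appropriate, and the class meeting $Q_3$ contributes as listed), and we also know $|V(Q_1)|$ from Lemma~\ref{lemma:KNkcompbd} ($\tfrac{n-1}{2}$ for $n$ odd, $\tfrac n2$ for $n$ even). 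Substituting these into the displayed inequality reduces each case to an arithmetic identity or an easy inequality. The subtle cases are those where $x_{i-1}$ or $x_i$ lies in the colour class meeting $Q_3$ (the Type~3 block), since then one of $|V(Q_1) \cap X_s|$, $|V(Q_1) \cap X_t|$ is one smaller than the corresponding non-$Q_3$ value; here Lemma~\ref{lemma:RTDgoodtype} is needed to guarantee that the Type~3 block is genuinely preceded by Type~1 (or, when $k=2$ and $c\ge3$, Type~2) vertices, so that the transition into the Type~3 block is a (2,3) or (1,3) step rather than something uncontrolled, and $Q_1$ really does have at least two Type~1 vertices so that the relevant indices exist.

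I expect the main obstacle to be bookkeeping rather than any deep idea: one must keep straight, in each parity case, exactly which colour classes are $Q_1$-skew, $Q_2$-skew, balanced, or the distinguished $Q_3$-meeting class, and verify that the blue ordering never forces a "bad" adjacent pair — for instance a step from a $Q_2$-skew Type~2 vertex to a $Q_1$-skew Type~1 vertex is impossible because Type~1 precedes Type~2, which is precisely why the ordering was chosen. The one genuinely delicate point is handling the boundary between Type~2 and Type~3 (and between Type~1 and Type~3 when there are no Type~2 vertices, which happens e.g.\ when $c$ is even so every non-$Q_3$ class is balanced): there one needs the size bounds of Theorem~\ref{theorem:Hforreg} for the $Q_3$-meeting class together with $|V(Q_1)| \leq \tfrac n2$ (Lemma~\ref{lemma:KNlegithit}) to close the inequality. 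Once all five Type-transition cases and the two parities of $n$ (and, when $n$ is even, the two parities of $c$) are checked, monotonicity $|\alpha_1| \geq |\alpha_2| \geq \cdots \geq |\alpha_{|V(Q_1)|}|$ follows immediately by chaining the one-step inequalities.
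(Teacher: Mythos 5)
Your starting point contains an error, and it sits exactly where the content of the lemma lies. The formula you call the \emph{exact expression} for $|\alpha_i|-|\alpha_{i-1}|$ omits a term: in passing from $\alpha_{i-1}$ to $\alpha_i$, the edges from $x_i$ to the already-labelled vertices $x_1,\dots,x_{i-1}$ of $Q_1$ are already present in $\alpha_{i-1}$, so the first part of the union grows by only $\deg_1(x_i)-r$, where $r:=|\{x_ix_f\in E(G): f<i\}|$. What you wrote down is therefore only the upper bound used in Lemma~\ref{lemma:KNknralpha} (where it appears with $\leq$, not $=$), and it is too weak to give monotonicity. Concretely, take $k$ even and $c$ odd, so $|V(Q_1)|=\frac{n}{2}$ and your displayed inequality requires $|V(Q_1)\cap X_s|+|V(Q_1)\cap X_t|\geq c$. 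By Theorem~\ref{theorem:Hforreg} a $Q_2$-skew class and the class meeting $Q_3$ each contribute only $\frac{c-1}{2}$ to $Q_1$, so for a $(2,2)$, $(2,3)$ or $(3,3)$ transition the left-hand side is $c-1$, and your computation yields only $|\alpha_i|\leq|\alpha_{i-1}|+1$. The inequality you expect to reduce to an arithmetic identity is simply false in these cases without the $-r$ correction, so the case analysis you outline cannot be completed as described.

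The paper's proof closes precisely this gap. It retains the $-r$ term and uses Lemma~\ref{lemma:RTDgoodtype} not merely to control which type transitions occur, but to guarantee that $Q_1$ contains two Type~1 vertices; these lie in colour classes different from that of any non-Type-1 vertex $x_i$ (vertices of $Q_1$ in the same class have the same type) and precede $x_i$ in the blue ordering, whence $r\geq 2$ whenever $x_i$ is not Type~1. Combined with the crude bound $|V(Q_1)\cap X_s|+|V(Q_1)\cap X_t|\geq n_s-2$ this gives $r+|V(Q_1)\cap X_s|+|V(Q_1)\cap X_t|\geq n_s$ and hence $2|V(Q_1)|>n$ would be forced, contradicting Lemma~\ref{lemma:KNlegithit}; the remaining case, where both $x_{i-1}$ and $x_i$ are Type~1, is handled directly since each such class contributes at least $\frac{n_s}{2}$ to $Q_1$. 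Your proposal never introduces the back-edge count $r$, so it is missing the one idea that upgrades the red-ordering bound $|\alpha_i|\leq|\alpha_1|+n-2$ to the monotonicity claimed here.
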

\begin{proof}
We will show that $|\alpha_{i}| \leq |\alpha_{i-1}|$ for all $i$. We can write $\alpha_{i}$ as the disjoint union
$$\alpha_{i} = \{x_{\ell}u \in E(G) : u \in V(Q_{1}), 1 \leq \ell \leq i\} \cup \{x_{j}w \in E(G) : w \in V(G)-V(Q_{1}), i \leq j \leq |V(Q_{1})|\}.$$
Let $X_{s},X_{t}$ be the colour classes such that $x_{i-1} \in X_{s}$ and $x_{i} \in X_{t}$, and note that it is possible that $s=t$. Define $r:=|\{x_{i}x_{f} \in E(G) : f < i\}|$. Then 
\begin{align*}
|\alpha_{i}| - |\alpha_{i-1}| = &|\{x_{\ell}u \in E(G) : u \in V(Q_{1}), 1 \leq \ell \leq i\}| \\
&- |\{x_{\ell}u \in E(G) : u \in V(Q_{1}), 1 \leq \ell \leq i-1\}| \\
&+ |\{x_{j}w \in E(G) : w \in V(G)-V(Q_{1}), i \leq j \leq |V(Q_{1})|\}| \\
&- |\{x_{j}w \in E(G) : w \in V(G)-V(Q_{1}), i-1 \leq j \leq |V(Q_{1})|\}| \\
= &\deg_{1}(x_{i}) - r -|\{x_{i-1}w \in E(G)| w \in V(G)-V(Q_{1})\}| \\
= &\deg_{1}(x_{i}) - r -(\deg_{G}(x_{i-1})-\deg_{1}(x_{i-1})) \\
= &\deg_{1}(x_{i}) - r -(n - n_{s} - \deg_{1}(x_{i-1})) \\
= &|V(Q_{1})|-|V(Q_{1} \cap X_{t})|-r-(n - n_{s} - |V(Q_{1})|+|V(Q_{1} \cap X_{s})|) \\
= &2|V(Q_{1})|+n_{s}-r-|V(Q_{1} \cap X_{t})|-n-|V(Q_{1} \cap X_{s})|.
\end{align*}

Assume for the sake of contradiction that $|\alpha_{i}| - |\alpha_{i-1}| > 0$. Then:
$$2|V(Q_{1})| + n_{s} > n  + r + |V(Q_{1}) \cap X_{s}| + |V(Q_{1}) \cap X_{t}|.$$
There are two cases to consider. Firstly, say that both $x_{i-1}$ and $x_{i}$ are Type 1. So $X_{s}$ and $X_{t}$ are both balanced or $Q_{1}$-skew, and neither intersects $Q_{3}$. Since $G$ is regular, $n_{t}=n_{s}$. Then by Corollary~\ref{corollary:KNksizerules}, $|V(Q_{1}) \cap X_{s}| + |V(Q_{1}) \cap X_{t}| \geq \frac{n_{s}}{2} + \frac{n_{t}}{2} = n_{s}$. Hence $2|V(Q_{1})| + n_{s} > n + n_{s} +r \geq n+n_{s}$, so $2|V(Q_{1})| > n$, which contradicts Lemma~\ref{lemma:KNlegithit}.

Secondly, since we ordered our vertices by non-decreasing type, we can assume $x_{i}$ does not have Type 1. However, by Lemma~\ref{lemma:RTDgoodtype}, $Q_{1}$ has at least two Type 1 vertices, $x_{a}$ and $x_{b}$. Note if two vertices of $Q_{1}$ are in the same colour class, they have the same type, so we know that $x_{a}$ and $x_{b}$ are in a different colour class to $x_{i}$. Also, $a,b < i$, thus $r \geq 2$. Since $n_{t}=n_{s}$, by Theorem~\ref{theorem:Hfornonreg}, $|V(Q_{1}) \cap X_{s}| + |V(Q_{1}) \cap X_{t}| \geq \frac{n_{s}-2}{2} + \frac{n_{t}-2}{2} = n_{s}-2$. Hence $2|V(Q_{1})| + n_{s} > n + n_{s}-2 +r \geq n+n_{s}$, so $2|V(Q_{1})| > n$, which again contradicts Lemma~\ref{lemma:KNlegithit}.
\end{proof}

We must also consider the equivalent argument for bags to the left of $\gamma$, as we did in the general case. However, here the arguments are not quite the same.

\begin{lemma}
\label{lemma:KNkregbeta}
Let $G,v,\mathcal{B},(H,(Q_1,Q_2,Q_3))$ and $(T,\mathcal{X})$ be as in Lemma~\ref{lemma:KNkregalpha}. If $k \geq 3$ or $c \geq 3$, then $|\beta_{1}| \geq |\beta_{2}| \geq \dots \geq |\beta_{|V(Q_{2})|}|$.
\end{lemma}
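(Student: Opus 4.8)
The plan is to repeat, almost verbatim, the degree-counting argument of Lemma~\ref{lemma:KNkregalpha}, with $Q_2$ playing the role of $Q_1$. Writing $\beta_i$ as the disjoint union of the edges inside $Q_2$ incident to one of $y_1, \dots, y_i$ and the edges leaving $Q_2$ incident to one of $y_i, \dots, y_{|V(Q_2)|}$, letting $X_s$ and $X_t$ be the colour classes with $y_{i-1} \in X_s$ and $y_i \in X_t$ (possibly $s=t$), and setting $r := |\{y_i y_f \in E(G) : f < i\}|$, the same bookkeeping gives
$$|\beta_i| - |\beta_{i-1}| = 2|V(Q_2)| + n_s - r - |V(Q_2) \cap X_t| - n - |V(Q_2) \cap X_s|.$$
I would then suppose, for a contradiction, that this quantity is positive for some $i$, that is, $2|V(Q_2)| + n_s > n + r + |V(Q_2) \cap X_s| + |V(Q_2) \cap X_t|$; chaining $|\beta_i| \le |\beta_{i-1}|$ over all $i$ then gives the lemma.

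The one genuinely new ingredient is that $Q_2$ is the \emph{smaller} of the two large components: by Lemma~\ref{lemma:KNkcompbd}, $2|V(Q_2)|$ is $n-1$ when $n$ is odd and $n-2$ when $n$ is even, so $2|V(Q_2)| \le n-1$ always. Hence it suffices to prove $r + |V(Q_2) \cap X_s| + |V(Q_2) \cap X_t| \ge n_s - 1 = c-1$. If $y_{i-1}$ and $y_i$ are both Type~1, then $X_s$ and $X_t$ are balanced or $Q_2$-skew and do not meet $Q_3$, so Corollary~\ref{corollary:KNksizerules} gives $|V(Q_2) \cap X_s| + |V(Q_2) \cap X_t| \ge \frac{c}{2} + \frac{c}{2} = c$ and we are done (indeed $2|V(Q_2)| > n$, contradicting Lemma~\ref{lemma:KNlegithit}). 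If $y_i$ is not Type~1, then---when $k \ge 3$---Lemma~\ref{lemma:RTDgoodtype} supplies a Type~1 vertex $y_a$ of $Q_2$; since the blue ordering lists all Type~1 vertices first we have $a < i$, and since two vertices lying in a common component and a common colour class necessarily have the same type, $y_a$ lies in a different colour class from $y_i$, so $y_a y_i \in E(G)$ and $r \ge 1$. Together with $|V(Q_2) \cap X_s| + |V(Q_2) \cap X_t| \ge c-2$ from Theorem~\ref{theorem:Hfornonreg}, this yields $r + |V(Q_2) \cap X_s| + |V(Q_2) \cap X_t| \ge c-1$, as required.

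The hard part is the case $k = 2$, where (as the remark preceding the lemma warns) the two large components behave quite differently from the $k \ge 3$ situation. Here Lemma~\ref{lemma:RTDgoodtype} only produces a Type~1 \emph{or} Type~2 vertex of $Q_2$, and when that vertex is Type~2 and $y_i$ is also Type~2 it can share the colour class of $y_i$, so the argument for $r \ge 1$ fails. To get around this I would invoke the exact regular bipartite structure of Theorem~\ref{theorem:Hforreg}: for $k=2$ with $c$ odd there is exactly one $Q_1$-skew class and one balanced class meeting $Q_3$, each of which meets $Q_2$ in exactly $\frac{c-1}{2}$ vertices, so $|V(Q_2) \cap X_s| + |V(Q_2) \cap X_t| \ge c-1$ already closes the case with no appeal to $r$; and for $k=2$ with $c$ even the only skew class meets $Q_3$, so the only non-Type-3 vertices of $Q_2$ are the $\frac{c}{2} \ge 1$ vertices of the unique balanced class, which are Type~1, and the previous paragraph applies. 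Verifying that this list of sub-cases is exhaustive, and that ``$k \ge 3$ or $c \ge 3$'' is exactly what guarantees the required vertex of $Q_2$ in each of them, is the only delicate point; everything else is the routine degree count transported from Lemma~\ref{lemma:KNkregalpha}.
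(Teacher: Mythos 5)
Your proof is correct, and the core of it --- the disjoint-union decomposition of $\beta_i$, the identity $|\beta_i|-|\beta_{i-1}| = 2|V(Q_2)|+n_s-r-|V(Q_2)\cap X_t|-n-|V(Q_2)\cap X_s|$, and the use of $|V(Q_2)|\le\frac{n-1}{2}$ --- is exactly the paper's argument. The one place you diverge is the case split, and it is worth noting that the paper's choice makes your ``hard part'' disappear. You split on ``both $y_{i-1},y_i$ Type 1'' versus ``$y_i$ not Type 1'', inheriting the dichotomy from the $\alpha$-lemma; in the second case you only get $|V(Q_2)\cap X_s|+|V(Q_2)\cap X_t|\ge c-2$ and therefore must manufacture $r\ge 1$, which fails generically for $k=2$ and forces you into the ad hoc sub-cases via Theorem~\ref{theorem:Hforreg}. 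The paper instead splits on whether $y_i$ has Type 3. If neither $y_{i-1}$ nor $y_i$ is Type 3 (which, by the blue ordering, is equivalent to $y_i$ not being Type 3), then neither colour class meets $Q_3$, and Corollary~\ref{corollary:KNksizerules} gives $|V(Q_2)\cap X|\ge\frac{c-1}{2}$ for \emph{every} such class --- balanced, $Q_1$-skew or $Q_2$-skew alike --- so the sum is already $\ge c-1$ with $r\ge 0$ and no Type 1/Type 2 distinction is needed. Only when $y_i$ is Type 3 does one need $r\ge 1$, and there Lemma~\ref{lemma:RTDgoodtype} supplies a non-Type-3 vertex of $Q_2$ (Type 1 \emph{or} Type 2 suffices, since any non-Type-3 vertex lies outside the unique colour class meeting $Q_3$), which works uniformly for $k\ge 3$ and for $k=2$, $c\ge 3$. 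This is precisely why the lemma's hypothesis, via Lemma~\ref{lemma:RTDgoodtype}, only demands a ``Type 1 or Type 2'' vertex of $Q_2$ when $k=2$: the smaller component $Q_2$ never needs the stronger $\frac{c}{2}$ bound that Type 1 vertices provide for $Q_1$. Your version is valid as written, just longer than necessary.
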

\begin{proof}
We will show that $|\beta_{i}| \leq |\beta_{i-1}|$ for all $i$. We can write $\beta_{i}$ as the disjoint union
$$
\beta_{i} = \{y_{\ell}u \in E(G) : u \in V(Q_{2}), 1 \leq \ell \leq i\} \cup \{y_{j}w \in E(G) : w \in V(G)-V(Q_{2}), i \leq j \leq |V(Q_{2})|\}.
$$ Let $X_{s},X_{t}$ be the colour classes such that $y_{i-1} \in X_{s}$ and $y_{i} \in X_{t}$, and note that it is possible that $s=t$. Define $r:=|\{y_{i}y_{f} \in E(G) : f < i\}|$. Then 
\begin{align*}
|\beta_{i}| - |\beta_{i-1}| = &|\{y_{\ell}u \in E(G) : u \in V(Q_{2}), 1 \leq \ell \leq i\}| \\
&- |\{y_{\ell}u \in E(G) : u \in V(Q_{2}), 1 \leq \ell \leq i-1\}| \\
&+ |\{y_{j}w \in E(G) : w \in V(G)-V(Q_{2}), i \leq j \leq |V(Q_{2})|\}| \\
&- |\{y_{j}w \in E(G) : w \in V(G)-V(Q_{2}), i-1 \leq j \leq |V(Q_{2})|\}| \\
= &\deg_{2}(y_{i}) - r -|\{y_{i-1}w \in E(G)| w \in V(G)-V(Q_{2})\}| \\
= &\deg_{2}(y_{i}) - r -(\deg_{G}(y_{i-1})-\deg_{2}(y_{i-1})) \\
= &\deg_{2}(y_{i}) - r -(n - n_{s} - \deg_{2}(y_{i-1})) \\
= &|V(Q_{2})|-|V(Q_{2} \cap X_{t})|-r-(n - n_{s} - |V(Q_{2})|+|V(Q_{2} \cap X_{s})|) \\
= &2|V(Q_{2})|+n_{s}-r-|V(Q_{2} \cap X_{t})|-n-|V(Q_{2} \cap X_{s})|.
\end{align*}

Assume for the sake of contradiction that $|\beta_{i}| - |\beta_{i-1}| > 0$. Then:
$$2|V(Q_{2})| + n_{s} > n  + r + |V(Q_{2}) \cap X_{s}| + |V(Q_{2}) \cap X_{t}|.$$
There are two cases to consider. Firstly, say that neither of $y_{i}$ and $y_{i-1}$ have Type 3. So neither $X_{s}$ nor $X_{t}$ intersects $Q_{3}$. $G$ is regular, so $n_{t}=n_{s}$. By Corollary~\ref{corollary:KNksizerules}, $|V(Q_{2}) \cap X_{s}| + |V(Q_{2}) \cap X_{t}| \geq \frac{n_{s}-1}{2} + \frac{n_{t}-1}{2} = n_{s}-1$. Hence $2|V(Q_{2})| + n_{s} > n+r+n_{s}-1 \geq n+n_{s}-1$, and so $2|V(Q_{2})| > n-1$. However, Theorem~\ref{theorem:Hforreg} states that $|V(Q_{2})| \leq \frac{n-1}{2}$, so this is a contradiction.

Secondly, $y_{i}$ has Type 3. By Lemma~\ref{lemma:RTDgoodtype}, $Q_{2}$ contains at least one non-Type 3 vertex; this will be of a different colour class to $y_{i}$ and have a lower numbered index. Hence $r \geq 1$. By Theorem~\ref{theorem:Hfornonreg}, $|V(Q_{2}) \cap X_{s}| + |V(Q_{2}) \cap X_{t}| \geq \frac{n_{s}-2}{2} + \frac{n_{t}-2}{2} = n_{s}-2$, and hence $2|V(Q_{2})| + n_{s} > n+r+n_{s}-2 \geq n+n_{s}-1$. Again, this contradictions Theorem~\ref{theorem:Hforreg}.
\end{proof}

\begin{lemma}
\label{lemma:KNkgamma}
Let $G,v,\mathcal{B},(H,(Q_1,Q_2,Q_3))$ and $(T,\mathcal{X})$ be as in Lemma~\ref{lemma:KNkregalpha}. If $k \geq 3$ or $c \geq 3$, then $|\alpha_{1}| \leq |\gamma|$ and $|\beta_{1}| \leq |\gamma|$.
\end{lemma}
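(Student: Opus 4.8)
The plan is to convert each of the two bag-size comparisons into a comparison of vertex degrees in $G$, and then to read off the relevant degree values from Theorem~\ref{theorem:Hforreg}. Write $z$ for the unique vertex of $Q_3$, and for components $Q_i,Q_j$ let $e(Q_i,Q_j)$ denote the number of edges of $G$ with one end in each. First I would observe that $\alpha_1$ is the disjoint union of the $\deg_1(x_1)$ edges of $G$ lying inside $Q_1$ and incident to $x_1$, together with all edges of $G$ leaving $V(Q_1)$, of which there are $e(Q_1,Q_2)+\deg_1(z)$; hence $|\alpha_1|=\deg_1(x_1)+e(Q_1,Q_2)+\deg_1(z)$. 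On the other hand $\gamma=H$ is exactly the set of edges joining distinct components of $G-H$ (each such edge must lie in $H$, or the two components would be joined in $G-H$), so $|\gamma|=e(Q_1,Q_2)+\deg_1(z)+\deg_2(z)$. Subtracting, $|\gamma|-|\alpha_1|=\deg_2(z)-\deg_1(x_1)$, so $|\alpha_1|\le|\gamma|$ is equivalent to $\deg_1(x_1)\le\deg_2(z)$; by the symmetric computation $|\gamma|-|\beta_1|=\deg_1(z)-\deg_2(y_1)$, so $|\beta_1|\le|\gamma|$ is equivalent to $\deg_2(y_1)\le\deg_1(z)$.

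Since $G=K_{c,\dots,c}$, a vertex $u$ in colour class $X_s$ has $\deg_i(u)=|V(Q_i)|-|V(Q_i)\cap X_s|$, so the two remaining inequalities only involve the intersection sizes of $Q_1,Q_2$ with the colour classes $X_a$, $X_b$, $X_z$ of $x_1$, $y_1$, $z$ respectively. In the blue ordering $x_1$ is the first vertex of $Q_1$, so $x_1$ has Type~1 by Lemma~\ref{lemma:RTDgoodtype} (whose hypothesis is precisely $k\ge 3$ or $c\ge 3$); thus $X_a$ is a balanced or $Q_1$-skew colour class not meeting $Q_3$. Likewise $y_1$ has Type~1, unless $Q_2$ has no Type~1 vertex at all — which, as I will note below, forces $k=2$ — in which case $y_1$ has Type~2 and $X_b$ is $Q_1$-skew. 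Finally $z$ lies in the unique colour class $X_z$ that meets $Q_3$. I would then split into the three cases of Theorem~\ref{theorem:Hforreg}, using $|V(Q_1)|-|V(Q_2)|\in\{0,1\}$ (from Lemma~\ref{lemma:KNkcompbd}) together with the precise intersection sizes supplied by Theorem~\ref{theorem:Hforreg}, Corollary~\ref{corollary:KNksizerules}, and Corollary~\ref{corollary:KNksizerules2}.

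Concretely: when $n$ is odd, both $c$ and $k$ are odd, $X_a$ is $Q_1$-skew with $|V(Q_1)\cap X_a|=(c+1)/2$, $X_b$ is $Q_2$-skew with $|V(Q_2)\cap X_b|=(c+1)/2$, and $X_z$ is balanced with $(c-1)/2$ vertices in each of $Q_1,Q_2$; since $|V(Q_1)|=|V(Q_2)|$ this gives $\deg_1(x_1)<\deg_2(z)$ and $\deg_2(y_1)<\deg_1(z)$. When $n$ is even and $c$ is odd the same intersection sizes apply (now $|V(Q_1)|=|V(Q_2)|+1$), giving $\deg_1(x_1)=\deg_2(z)$ and $\deg_2(y_1)<\deg_1(z)$ when $k\ge 4$; when $k=2$, $y_1$ is Type~2 and one checks $\deg_1(x_1)=\deg_2(z)$ and $\deg_2(y_1)<\deg_1(z)$ once more. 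When $n$ and $c$ are both even, every colour class other than $X_z$ is balanced with $c/2$ vertices in each component, while $X_z$ is $Q_1$-skew with $c/2$ vertices in $Q_1$ and $c/2-1$ in $Q_2$; this yields $\deg_1(x_1)=\deg_2(z)$ and $\deg_2(y_1)<\deg_1(z)$. In every case both inequalities hold.

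The bulk of the work is routine parity bookkeeping, but there are two points where care is needed. The first is the case $k=2$ with $c$ odd: here $Q_2$ contains no Type~1 vertex, so in the blue ordering $y_1$ is necessarily Type~2 rather than Type~1, and one must appeal to Theorem~\ref{theorem:Hforreg} to see that apart from $X_z$ there is exactly one other colour class and that it is $Q_1$-skew. The second is that in all cases with $n$ even the inequality $|\alpha_1|\le|\gamma|$ is in fact an equality, so there is no slack in the estimates; this is precisely why one needs the exact counts of Theorem~\ref{theorem:Hforreg} rather than the coarser bounds of Theorem~\ref{theorem:Hfornonreg}. I expect this second point to be the main conceptual obstacle: the temptation is to use the easy intersection bounds, but they are off by exactly one in the even cases, so the structure must be tracked precisely.
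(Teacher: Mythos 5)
Your proposal is correct and follows essentially the same route as the paper: the same identity $|\gamma|-|\alpha_1|=\deg_2(z)-\deg_1(x_1)$ (and its analogue for $\beta_1$), the same appeal to Lemma~\ref{lemma:RTDgoodtype} to pin down the types of $x_1$ and $y_1$, and the same intersection counts from Theorem~\ref{theorem:Hforreg}. The only difference is presentational: the paper finishes with a single integrality argument covering all parities at once, whereas you enumerate the three parity cases explicitly.
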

\begin{proof}
By inspection, $\alpha_{1} = \{x_{1}u, uw \in E(G) : u \in V(Q_{1}), w \in V(G)-V(Q_{1})\}$. Thus the edges of the form $x_{1}u$ are the only edges in $\alpha_{1}$ not in $\gamma$, and the edges between $Q_{2}$ and $Q_{3}$ (all of which are adjacent to $z$) are the only edges in $\gamma$ not in $\alpha_{1}$. Thus $|\alpha_{1}| + \deg_{2}(z) - \deg_{1}(x_{1}) = |\gamma|$. Suppose for the sake of contradiction that $|\alpha_{1}| > |\gamma|$. Say $x_{1} \in X_{s}$ and $z \in X_{t}$. By Lemma~\ref{lemma:RTDgoodtype}, $x_{1}$ has Type 1, so $s \neq t$. Substituting $\deg_{2}(z) = |V(Q_{2})| - |V(Q_{2}) \cap X_{t}|$ and $\deg_{1}(x_{1}) = |V(Q_{1})| - |V(Q_{1}) \cap X_{s}|$ gives
$$|V(Q_{1})| - |V(Q_{2})| > |V(Q_{1}) \cap X_{s}| - |V(Q_{2}) \cap X_{t}|.$$
By Theorem~\ref{theorem:Hforreg}, $|V(Q_{1})|-|V(Q_{2})| \leq 1$. Similarly, since $X_{t}$ intersects $Q_{3}$, $|V(Q_{2}) \cap X_{t}| = \frac{c-1}{2}$ if $c$ is odd, and $|V(Q_{2}) \cap X_{t}|=\frac{c-2}{2}$ if $c$ is even. Since $X_{s} \cap Q_{3} = \emptyset$ and $x_{1}$ has Type 1, $|V(Q_{1}) \cap X_{s}| \geq \frac{c}{2}$. Hence $|V(Q_{1}) \cap X_{s}| - |V(Q_{2}) \cap X_{t}| \geq \frac{1}{2}$ if $c$ is odd, or $1$ if $c$ is even. However, this value is an integer, so $|V(Q_{1}) \cap X_{s}| - |V(Q_{2}) \cap X_{t}| \geq 1$, implying $|V(Q_{1})|-|V(Q_{2})| > 1$,  which is a contradiction of Theorem~\ref{theorem:Hforreg}.

Now we consider $\beta_{1} =\{y_{1}u, uw \in E(G) : u \in V(Q_{2}), w \in V(G)-V(Q_{2})\}$. Suppose for the sake of contradiction that $|\beta_{1}| > |\gamma|$. Let $y_{1} \in X_{s}$ and $z \in X_{t}$. By Lemma~\ref{lemma:RTDgoodtype}, $x_{1}$ has Type 1 or Type 2, so $s \neq t$. Performing substitutions as we did in the $\alpha_{1}$ case gives
$$|V(Q_{2})| - |V(Q_{1})| > |V(Q_{2}) \cap X_{s}| - |V(Q_{1}) \cap X_{t}|.$$
Since $X_{s}$ does not intersect $Q_{3}$ and $X_{t}$ does, by Theorem~\ref{theorem:Hforreg}, $|V(Q_{2}) \cap X_{s}| \geq \frac{c-1}{2}$ and $|V(Q_{1}) \cap X_{t}| = \frac{c-1}{2}$ or $\frac{c}{2}$. Thus $|V(Q_{2}) \cap X_{s}| - |V(Q_{1}) \cap X_{t}| \geq 0$ or $-\frac{1}{2}$, but since it is an integer, $|V(Q_{2}) \cap X_{s}| - |V(Q_{1}) \cap X_{t}| \geq 0$, implying $|V(Q_{2})|-|V(Q_{1})|>0$, which contradicts Theorem~\ref{theorem:Hforreg}.
\end{proof}

By Lemmas~\ref{lemma:KNkregalpha}, \ref{lemma:KNkregbeta} and \ref{lemma:KNkgamma}, $\gamma$ is the largest bag in all but a few cases. Recall $\gamma=H$. Hence, together with Theorem~\ref{theorem:Hforreg}, we get the following result.

If $G$ is a regular $k$-partite graph such that $n>k$, and either $k \geq 3$ or $c \geq 3$, then $$\pw(L(G))=\tw(L(G)) = |H|-1.$$

We now accurately determine $|H|$ when $G$ is regular.

We can determine $|H|$ by calculating the number of edges between $Q_{1}$ and $Q_{2}$, and the number of edges adjacent to $z \in Q_{3}$. Theorem~\ref{theorem:Hforreg} gives us all we require. It follows that:
$$
|H| = 
\begin{cases}
\frac{c^{2}k^{2}}{4} - \frac{c^{2}k}{4} + \frac{ck}{2} - \frac{c}{2} + \frac{k}{4} - \frac{1}{4} &\text{, if $ck$ odd} \\
\frac{c^{2}k^{2}}{4} - \frac{c^{2}k}{4} + \frac{ck}{2} - \frac{c}{2} &\text{, if $c$ even} \\
\frac{c^{2}k^{2}}{4} - \frac{c^{2}k}{4} + \frac{ck}{2} - \frac{c}{2} + \frac{k}{4} - \frac{1}{2} &\text{, if $k$ even, $c$ odd} 
\end{cases}
$$

This gives the exact answer for the treewidth and pathwidth of the line graph of the $(n-c)$-regular complete $k$-partite graph, when $n>k$ and either $k \geq 3$ or $c \geq 3$. When $n=k$, we determine the treewidth using by Theorem~\ref{theorem:KN}. When $k=2$ and $c=2$, $G$ is a 4-cycle and $\pw(K_{2,2}) = \tw(K_{2,2})=2$, which satisfies our result by inspection. This proves Theorem~\ref{theorem:KNkreg}. 

\bibliographystyle{amsplain}
\bibliography{twbib}

\end{document}